\documentclass[11pt]{article}

\usepackage[margin=1in]{geometry}
\usepackage{amsmath, amsthm, amsfonts, amssymb, mathrsfs}
\usepackage{enumitem}
\usepackage{hyperref}
\usepackage{xcolor}
   \usepackage{tikz}
   \usepackage{caption}
   \usetikzlibrary{calc}

\newtheorem{main}{Theorem}

\newtheorem{thm}{Theorem}[section]
\newtheorem{prop}[thm]{Proposition}
\newtheorem{lemma}[thm]{Lemma}

\theoremstyle{definition}
\newtheorem{defn}[thm]{Definition}
\newtheorem{rmk}[thm]{Remark}

\newcommand{\R}{\mathbb{R}}
\newcommand{\Rpos}{\R^+}

\title{Regularity of Lyapunov exponents at one-point Lyapunov spectra: the semisimple case}
\author{Yingjian Liu, Marcelo Viana}
\date{\today}

\begin{document}

\maketitle

\begin{abstract}
    We study the regularity of Lyapunov exponents as functions on the space of compactly supported probability measures on $\mathrm{GL}(d,\mathbb{R})$. We prove that the Lyapunov exponents are pointwise log-Hölder continuous with respect to the Wasserstein distance, at semisimple probability measures with
    one-point Lyapunov spectrum. The proof relies on a decomposition of the
    action into virtually conformal subspaces and a Berry-Esseen type estimate
    for the random walk towards these subspaces.
\end{abstract}

\tableofcontents

\section{Introduction and main result}
Let $G=\mathrm{GL}(d,\mathbb{R})$ be the group of $d\times d$ invertible real matrices equipped with the distance derived from the operator norm 
$\|g\| = \sup\left\{\|gv\|/\|v\|: v \in \mathbb{R}^{d} \setminus\{0\}\right\}$.
Let $\sigma_1(g) \geq \sigma_2(g) \geq \dots \geq \sigma_d(g)>0$ denote the
singular values of any matrix $g \in G$. For any probability measure $\mu$ on
$G$ satisfying the finite first moment condition
$$
\int _{G} \log   N(g)  \, d\mu(g) < \infty,
$$
with $N(g)=\max\left( \|g\|, \|g^{-1}\| \right)$,
the \textit{Lyapunov exponents} $\lambda_{1}(\mu) \geq\dots\geq \lambda _{d}(\mu)$ may
be defined by
$$
\lambda _{i}(\mu) = \lim_{ n \to \infty } \frac{1}{n} \log\sigma_i(g_{n} \cdots g_{1}),
$$
where $(g_n)_{n \in \mathbb{N}}$ are independent random variables distributed according to $\mu$ and the equality holds almost surely.
See Furstenberg, Kesten~\cite{FK60}, Oseledets~\cite{Ose68}.

Let $\mathscr{C}(G)$ be the Banach space of bounded continuous real functions on $G$ and $\mathscr{P}(G)$ be the space of probability measures on $G$ endowed
with the \textit{Wasserstein distance}
$$
d_{\mathrm W}(\mu', \mu) := \sup \left\{ \left| \int_{G} \varphi \, d(\mu'-\mu) \right|: \varphi \in \mathscr{C}(G), \ \mathrm{Lip}(\varphi) \leq 1  \right\},
$$
where $\mathrm{Lip}(\cdot)$ is the Lipschitz constant.

Denote by $\Gamma _{\mu}$ the subsemigroup of $\mathrm{GL}(d, \R)$ generated by the support $\mathrm{supp}(\mu)$ of any $\mu\in\mathscr{P}(G)$.
We call $\mu$ \textit{semisimple} if the canonical action of $\Gamma _{\mu}$ on $\mathbb{R}^{d}$ is semisimple, meaning that any $\Gamma_\mu$-invariant subspace
admits a $\Gamma_\mu$-invariant complementary subspace.

Our main result in this paper is:

\begin{main} \label{thmA}
Let $\mu\in\mathscr{P}(G)$ be compactly supported, semisimple,
and such that $\lambda_{1}(\mu)=\lambda _{d}(\mu)$.
Then the Lyapunov exponent functions $\eta \mapsto \lambda_i(\eta)$ are log-H\"older continuous at $\mu$ in the following sense:
there exists a universal constant $\gamma>0$ such that, given any compact set $K \subset G$ containing $\mathrm{supp}(\mu)$, there
exist constants $\epsilon=\epsilon(K)>0$ and
$C=C(K,\mu)>0$ such that for any $\mu' \in \mathscr{P}(G)$ with
$\mathrm{supp}(\mu') \subset K$ and $d_{\mathrm W}(\mu',\mu)<\epsilon$,
\begin{equation} \label{eq:main_bound}
\left| \lambda_{i}(\mu') - \lambda_{i}(\mu) \right|
\leq C \left| \log d_{\mathrm W}(\mu', \mu) \right|^{-\gamma} 
\text{ for every } i\in \{1,\ldots,d\}.
\end{equation}
\end{main}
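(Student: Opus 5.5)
Since every $\lambda_i(\mu)$ equals a common value $\lambda:=\lambda_1(\mu)=\lambda_d(\mu)$, and $\lambda_1$ is upper semicontinuous while $\lambda_d$ is lower semicontinuous, the plan is to bound $\lambda_1(\mu')-\lambda$ from above and $\lambda_d(\mu')-\lambda$ from below. By the ordering $\lambda_d(\mu')\le\lambda_i(\mu')\le\lambda_1(\mu')$, this gives \eqref{eq:main_bound} for every $i$. Passing to inverses (replacing $\mu$ by the law of $g^{-1}$, which is still semisimple with one-point spectrum), it suffices to control $\lambda_1$. Everything therefore reduces to a quantitative upper semicontinuity estimate: if for some $n$ we have
$$\tfrac1n\int\log\|g_n\cdots g_1\|\,d\mu^{*n}\le \lambda+\delta_n ,$$
then subadditivity gives $\lambda_1(\mu')\le \tfrac1n\int\log\|g_n\cdots g_1\|\,d\mu'^{*n}$. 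Because $\log\|\cdot\|$ is Lipschitz on the compact set $K^n$ with constant $\le C n$, we get $d_W(\mu'^{*n},\mu^{*n})\le n\,L_K^{\,n} d_W(\mu',\mu)$. Hence
$$\lambda_1(\mu')\le \lambda+\delta_n+C L_K^{\,n}\,d_W(\mu',\mu).$$
Choosing $n\approx c\,|\log d_W(\mu',\mu)|$ makes the last term a power of $d_W$. The log-Hölder rate is then exactly what we obtain provided $\delta_n\lesssim n^{-\gamma}$.

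The core is thus the rate of convergence $\tfrac1n\mathbb E\log\|S_n\|-\lambda=O(n^{-\gamma})$ for $\mu$ itself. To get it, decompose $\mathbb R^d=V_1\oplus\cdots\oplus V_k$ into $\Gamma_\mu$-irreducible invariant pieces, which is possible by semisimplicity. Then $\|S_n\|\asymp\max_j\|S_n|_{V_j}\|$, and each restriction has top and bottom exponent equal to $\lambda$. For an irreducible piece with equal exponents, the Guivarc'h–Goldsheid–Margulis dichotomy says the Zariski closure of $\Gamma_\mu|_{V_j}$ is not proximal on any exterior power. One then argues that $\Gamma_\mu|_{V_j}$ is virtually conformal: there is a scalar product on $V_j$, and a finite-index subsemigroup, acting by scalar multiples of orthogonal maps. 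After conjugating, $\log\|S_n|_{V_j}\|=\sum_i\log|\det g_i|_{V_j}|^{1/\dim V_j}$ up to a bounded error. This is a sum of i.i.d. bounded variables, so its expectation is exactly $n\lambda+O(1)$.

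The main obstacle is that the estimate must hold uniformly for $\mu'$ near $\mu$, not just for $\mu$. The crude bound above only transfers the estimate for $\mu$ at cost $L_K^n d_W$, and this is what forces the logarithmic rather than Hölder modulus. The real difficulty is checking that the compact-group conjugacy constants and the cross-term errors between different $V_j$ remain $O(1)$ rather than growing in $n$. Without semisimplicity, unipotent-type blocks could produce $\log n$ growth that would destroy the argument. I would try to handle the interaction between the pieces by a Berry–Esseen type estimate for the walk's approach to the subspaces $V_j$. Concretely, this means bounding the probability that the direction of $S_n v$ stays near a smaller sum of $V_j$'s, with error $n^{-1/2}$. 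This lets the log-norms of the pieces be compared with a polynomial loss, yielding $\delta_n\le Cn^{-\gamma}$ with universal $\gamma$. The dependence of $\epsilon$ on $K$ alone comes from requiring $L_K^{\,n}d_W<1$.
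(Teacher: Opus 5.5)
Your outer reduction is correct, and it is a genuinely different route from the paper's.

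\begin{itemize}
\item Because $\lambda_1(\mu)=\lambda_d(\mu)$, it suffices to bound $\lambda_1(\mu')$ from above and $\lambda_d(\mu')$ from below.
\item The lower bound is the upper bound applied to the law $\check\mu'$ of $g^{-1}$. Inversion is Lipschitz on $K$, and $\check\mu$ is still semisimple with one-point spectrum.
\item Subadditivity gives $\lambda_1(\mu')\le\frac1n\mathbb{E}_{\mu'}\log\|g_n\cdots g_1\|\le\frac1n\mathbb{E}_{\mu}\log\|g_n\cdots g_1\|+C^n d_{\mathrm W}(\mu',\mu)$. The Lipschitz constant of $\log\|\cdot\|$ on $n$-fold products is exponential in $n$, not $Cn$, but an exponential is all you use.
\end{itemize}

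So uniformity in $\mu'$ costs nothing, and what you call the main obstacle is not one in your framework: everything reduces to a rate $\delta_n$ for the fixed $\mu$. The paper instead pushes a $\mu'$-stationary measure forward by $P_\mu^n$ in Furstenberg's formula. That forces accumulation and equidistribution estimates uniform in the initial measure, and hence Schneider's Berry--Esseen theorem and the aperiodicity reduction.

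The gap is in your proof of $\delta_n=O(n^{-\gamma})$. The Guivarc'h--Goldsheid--Margulis dichotomy and Furstenberg's criterion need \emph{strong} irreducibility, and a $\Gamma_\mu$-irreducible $V_j$ need not be strongly irreducible. When it is not, three things fail: the Zariski closure can be proximal although the exponents coincide, no finite-index part acts conformally on $V_j$, and $\log\|S_n|_{V_j}\|$ is not the determinant sum up to $O(1)$.

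For a concrete case, take $\mu=\frac12(\delta_{\mathrm{diag}(2,1/2)}+\delta_w)$ on $\mathbb{R}^2$, with $w$ the coordinate swap. The action is irreducible and semisimple, and $\lambda_1=\lambda_2=0$. Yet $S_n=w^{\epsilon_n}\mathrm{diag}(e^{Y_n},e^{-Y_n})$ with $Y_n$ a centred, non-degenerate Markov-additive walk. Hence $\log\|S_n\|=|Y_n|$ has expectation of order $\sqrt n$, while the determinant sum vanishes. For the same reason $\mathbb{E}\max_j\neq\max_j\mathbb{E}$: the cross-terms between pieces are of order $\sqrt n$, not $O(1)$. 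The Berry--Esseen estimate on directions you then invoke is left unspecified and is not what your scheme needs.

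What closes the argument is the structure of Section~\ref{sec:alg}. Proposition~\ref{prop:struc_assump} gives strongly irreducible, virtually invariant blocks $W^k$ permuted by $\Gamma_\mu$; each is made conformal via the return measure of Lemma~\ref{lemma:return} and Furstenberg's criterion. In an adapted norm this gives $\log\|S_n\|=\max_k X^k_n$ with $X^k_n=\sum_{m\le n}\log a_m^{\zeta_{m-1}(k)}$.

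\begin{itemize}
\item Each $X^k_n$ is an additive functional of the finite permutation chain on the orbit of $k$, with stationary mean $\lambda$ by Lemma~\ref{lemma:const}.
\item Solving the Poisson equation on that orbit, which works even when the chain is periodic, gives $X^k_n-n\lambda=M^k_n+O(1)$, where $M^k_n$ is a martingale with bounded increments.
\item Hence $\mathbb{E}_\mu\log\|S_n\|\le n\lambda+\sum_k\mathbb{E}|M^k_n|+O(1)=n\lambda+O(\sqrt n)$.
\end{itemize}

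Your scheme then yields the theorem with $\gamma=1/2$, without Berry--Esseen bounds and without passing to $\mu^{*p_\mu}$. So your strategy is a real simplification of the paper's. But its key input must be this analysis of virtually conformal blocks, not virtual conformality of the irreducible pieces.
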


In particular, the Lyapunov exponent functions $\lambda_i(\cdot)$ are log-H\"older continuous at $\mu$ with respect to the Wasserstein-Hausdorff distance 
(see Tal, Viana~\cite{TVi20}) in the space of compactly supported probability measures on $G$.

\subsection{Context}

The study of the Lyapunov exponents of random matrices as functions of the underlying probability distribution has attracted a lot of attention since 
the foundational work of Furstenberg~\cite{Fur63}.
In the following we put Theorem~\ref{thmA} in context within that theory.

Ruelle~\cite{Rue79a} proved real analyticity of the top Lyapunov exponent
$\lambda_1(\cdot)$ when there is an invariant convex cone.
Real analyticity on probability weights still holds under the much weaker assumption that the
Lyapunov exponent is simple: that was first proved by Peres~\cite{Pe91} when
the probability distribution is finitely supported, and it has just been
extended to great generality by Amorim, Dur\~{a}es, Melo~\cite{ADM,Amorim}.

Furstenberg, Kifer~\cite{FK83} and Hennion~\cite{Hen84} proved continuity of
the top Lyapunov exponent at every probability distribution for which there
exists at most one invariant subspace.
In the compactly supported case, which interests us more directly here,
continuity actually holds everywhere: that was proved by Bocker, Viana~\cite{BoV17} when $d=2$ and by Avila, Eskin, Viana~\cite{AEV} in all dimensions. See also Viana~\cite[Chapter~10]{LLE}.
The conclusions of \cite{BoV17} have been extended to the Markov case,
by Malheiro, Viana~\cite{MaV15}, and to a very broad setting of linear
cocycles with invariant holonomies, by Backes, Brown, Butler~\cite{BBB18}.

A classical result of Le Page~\cite{LP89}, with extention by \cite{DuK16b}, \cite{BaD19} and \cite{Du25}, asserts that, in any dimension, the top Lyapunov exponent is H\"{o}lder continuous on every compact set of probability distributions satisfying strong irreducibility and the contraction property.
A construction of Duarte, Klein, Santos~\cite{DKS19} shows that these assumptions
can not be removed. See also Tal, Viana~\cite[Section~5]{TVi20}.

More recently, Tal, Viana~\cite{TVi20} proved that pointwise H\"older continuity still holds under the weaker assumption that the Lyapunov spectrum is simple, at least in dimension $2$. They also find explicit modulus of continuity -- log-H\"older continuity -- that holds at every point. A partial extension to arbitrary
dimensions of the pointwise H\"{o}lder continuity result has just been given by
Barreto~\cite{Barreto}.

Independently, and at about the same time, Duarte, Klein~\cite{DuK19b} proved that, for 2-dimensional matrices, the Lyapunov exponents are weak-H\"{o}lder continuous functions of the probability weights and matrix coefficients on any compact domain where the top exponent is simple. \textit{Weak-H\"{o}lder} is defined by replacing $d(x,y)^\beta$ with $\exp(-\beta(-\log d(x,y))^{\theta})$ with $\theta<1$ in the definition of H\"older continuity.

We refer the reader to the survey paper Viana~\cite{Disc20} and the books Viana~\cite{LLE} and~\cite{DuK16b} for much more information on the background to our present results, and additional references.

\subsection{Outline of the proof}

To illustrate the core mechanism, consider the measure
$\mu = \frac{1}{3}\delta_A + \frac{1}{3}\delta_B + \frac{1}{3}\delta_C$,
whose support consists of the three commuting positive diagonal matrices
\begin{equation}\label{eq:matrices}
  A = \begin{pmatrix} 2 & 0 & 0 \\ 0 & \tfrac{1}{2} & 0 \\ 0 & 0 & 1 \end{pmatrix}, \qquad
  B = \begin{pmatrix} 1 & 0 & 0 \\ 0 & 2 & 0 \\ 0 & 0 & \tfrac{1}{2} \end{pmatrix}, \qquad
  C = \begin{pmatrix} \tfrac{1}{2} & 0 & 0 \\ 0 & 1 & 0 \\ 0 & 0 & 2 \end{pmatrix}.
\end{equation}
Since the matrices commute, the Lyapunov exponents reduce to the expected values of
the logarithms of the diagonal entries. The cyclic structure of the support ensures
complete cancellation,
\[
  \lambda_i = \tfrac{1}{3}\log 2 + \tfrac{1}{3}\log \tfrac{1}{2} + \tfrac{1}{3}\log 1 = 0,
\]
yielding a one-point spectrum. Moreover, the system is semisimple: the state
space decomposes into one-dimensional invariant subspaces $V^i = \mathbb{R}e_i$
for $i = 1, 2, 3$.

The proof proceeds by analysing the induced random walk on the standard simplex
$\Delta^2$ (Figure~\ref{fig:simplex}). The simplex is partitioned into 
neighbourhoods $B^1, B^2, B^3$ around the axes
$\mathbb{R}e_i$, and a complementary central region $B_r^c$. The key estimate
controls transition probabilities between these regions: from any point in $B_r^c$,
the probability of remaining in $B_r^c$ at time $n$ decays polynomially
in $n$. These bounds establish Theorem~\ref{thmA}.

\begin{figure}[ht]
  \centering
  \begin{tikzpicture}[
      scale=0.5,
      x={(-0.866cm,-0.5cm)},
      y={(0.866cm,-0.5cm)},
      z={(0cm,1cm)},
      line join=round,
      line cap=round
  ]
 
  \pgfmathsetmacro{\rad}{4}
  \pgfmathsetmacro{\radplus}{\rad+1.5}
  \pgfmathsetmacro{\ang}{25}
  \pgfmathsetmacro{\compang}{90-\ang}
  \pgfmathsetmacro{\radcos}{\rad*cos(\ang)}
  \pgfmathsetmacro{\radsin}{\rad*sin(\ang)}
  \pgfmathsetmacro{\nL}{\rad*0.9}
  \pgfmathsetmacro{\nS}{\rad*0.15}
 
  % Axes
  \draw[very thick, ->] (0,0,0) -- ({\radplus},0,0)
    node[anchor=north east, font=\large] {$e_1$};
  \draw[very thick, ->] (0,0,0) -- (0,{\radplus},0)
    node[anchor=north west, font=\large] {$e_2$};
  \draw[very thick, ->] (0,0,0) -- (0,0,{\radplus})
    node[anchor=south, font=\large] {$e_3$};
 
  % Central region B_r^c
  \fill[gray!20, opacity=0.85]
    plot[variable=\t, domain=0:90, samples=30, smooth]
      ({\rad*cos(\t)}, {\rad*sin(\t)}, 0) --
    plot[variable=\t, domain=0:90, samples=30, smooth]
      (0, {\rad*cos(\t)}, {\rad*sin(\t)}) --
    plot[variable=\t, domain=90:0, samples=30, smooth]
      ({\rad*sin(\t)}, 0, {\rad*cos(\t)}) -- cycle;
 
  % B^1
  \fill[blue!30, opacity=0.85]
    plot[variable=\t, domain=0:\ang, samples=20, smooth]
      ({\rad*cos(\t)}, {\rad*sin(\t)}, 0) --
    plot[variable=\t, domain=0:90, samples=20, smooth]
      ({\radcos}, {\radsin*cos(\t)}, {\radsin*sin(\t)}) --
    plot[variable=\t, domain=\ang:0, samples=20, smooth]
      ({\rad*cos(\t)}, 0, {\rad*sin(\t)}) -- cycle;
 
  % B^2
  \fill[blue!30, opacity=0.85]
    plot[variable=\t, domain=90:\compang, samples=20, smooth]
      ({\rad*cos(\t)}, {\rad*sin(\t)}, 0) --
    plot[variable=\t, domain=0:90, samples=20, smooth]
      ({\radsin*cos(\t)}, {\radcos}, {\radsin*sin(\t)}) --
    plot[variable=\t, domain=\ang:0, samples=20, smooth]
      (0, {\rad*cos(\t)}, {\rad*sin(\t)}) -- cycle;
 
  % B^3
  \fill[blue!30, opacity=0.85]
    plot[variable=\t, domain=90:\compang, samples=20, smooth]
      (0, {\rad*cos(\t)}, {\rad*sin(\t)}) --
    plot[variable=\t, domain=90:0, samples=20, smooth]
      ({\radsin*cos(\t)}, {\radsin*sin(\t)}, {\radcos}) --
    plot[variable=\t, domain=\compang:90, samples=20, smooth]
      ({\rad*cos(\t)}, 0, {\rad*sin(\t)}) -- cycle;
 
  % Outer arc boundaries
  \draw[thick]
    plot[variable=\t, domain=0:90, samples=30, smooth]
      ({\rad*cos(\t)}, {\rad*sin(\t)}, 0);
  \draw[thick]
    plot[variable=\t, domain=0:90, samples=30, smooth]
      (0, {\rad*cos(\t)}, {\rad*sin(\t)});
  \draw[thick]
    plot[variable=\t, domain=0:90, samples=30, smooth]
      ({\rad*cos(\t)}, 0, {\rad*sin(\t)});
 
  % Dashed neighbourhood cuts
  \draw[thick, dashed]
    plot[variable=\t, domain=0:90, samples=20, smooth]
      ({\radcos}, {\radsin*cos(\t)}, {\radsin*sin(\t)});
  \draw[thick, dashed]
    plot[variable=\t, domain=0:90, samples=20, smooth]
      ({\radsin*cos(\t)}, {\radcos}, {\radsin*sin(\t)});
  \draw[thick, dashed]
    plot[variable=\t, domain=0:90, samples=20, smooth]
      ({\radsin*cos(\t)}, {\radsin*sin(\t)}, {\radcos});
 
  % Region labels
  \node[font=\large] at (\nL, \nS, \nS) {$B^1$};
  \node[font=\large] at (\nS, \nL, \nS) {$B^2$};
  \node[font=\large] at (\nS, \nS, \nL) {$B^3$};
  \node[font=\large] at ({\rad/3}, {\rad/3}, {\rad/3}) {$B_r^c$};
 
  \end{tikzpicture}
  \caption{The standard simplex $\Delta^2$ partitioned into the 
    neighbourhoods $B^1, B^2, B^3$ around each axis,
    and the central region $B_r^c$.}
  \label{fig:simplex}
\end{figure}

The paper proceeds in four stages, reducing the analysis
from algebraic structure down to probabilistic convergence bounds.
 
\begin{enumerate}
  \item {Algebraic structure} (Section~\ref{sec:alg}). When $\mu$ is not
    conformal, the one-point spectrum hypothesis combined with Furstenberg's
    criterion yields a coarsest decomposition of $\mathbb{R}^d$ into virtually
    $\Gamma_\mu$-conformal subspaces. To handle periodic behaviour of the random
    walk across these subspaces, we pass to a suitable aperiodic convolution power
    $\mu^{*p_\mu}$.
 
  \item {Analytical reduction} (Section~\ref{sec:reduction}). Via
    Furstenberg's formula and the Lipschitz continuity of the Markov operator, the
    difference $|\lambda_1(\mu') - \lambda_1(\mu)|$ is translated into a convergence
    rate estimate for the $\mu'$-stationary measure under the unperturbed operator
    $P_\mu$. The integral is split into a neighbourhood of the virtually invariant
    conformal subspaces, where the measure accumulates, and its complement.
 
  \item {Random walk estimates} (Section~\ref{sec:rw}). The logarithmic
    slopes between conformal blocks are analysed as one-dimensional random walks.
    Adapting Schneider's theorem for exponentially $\varphi$-mixing sequences yields
    polynomial accumulation and equidistribution bounds for the Markov iterates.
 
  \item {Optimisation} (Section~\ref{sec:proof}). The polynomial convergence
    to the stationary measures is balanced against the exponential growth of the
    Lipschitz constants of the iterated operators. Optimising the iteration step $n$
    in terms of $d_{\mathrm{W}}(\mu', \mu)$ yields log-H\"{o}lder regularity for
    $\lambda_1$; the bound is then extended to the full Lyapunov spectrum via
    exterior powers.
\end{enumerate}
 
The necessary background on Markov operators, exterior powers, Furstenberg's
formula, and the conformal case is collected in Section~\ref{sec:prelim}.

% ----------------------------------------------------------------------
\section{Preliminaries}\label{sec:prelim}

We collect the analytic tools used throughout the paper: Lipschitz estimates for the Markov operator, exterior powers and convolutions, Furstenberg's formula, and a self-contained proof of pointwise Lipschitz continuity in the conformal case.

Let $X=\mathbb{P} (\mathbb{R}^{d})$ be the real projective space endowed with the distance
$$
d_{X}([u],[v]) = |\sin\angle(u,v)| = \frac{ \| u \wedge v \| }{ \| u \| \| v \| },
$$
where $[w]=\mathbb{R}w$ denotes the projective class of any non-zero $w\in\mathbb{R}^{d}$. Let $\mathscr{C}(X)$ be the Banach space of continuous functions on $X$ and $\mathscr{P}(X)$ be the space of probability measures on $X$ endowed with the \textit{Wasserstein distance} (see, e.g., Villani \cite{Vil09}):
$$
d_{\mathrm W}(\eta', \eta) := \sup \left\{ \left| \int_{X} \psi \, d(\eta'-\eta) \right|: \psi \in \mathscr{C}(X), \ \mathrm{Lip}(\psi) \leq 1  \right\} \leq 1.
$$

\subsection{Markov and Lipschitz operators}
We define the Lipschitz constant for elements in $\mathscr{C}(X)$ by
$$
\mathrm{Lip}(\varphi) = \sup_{x\neq y} \frac{| \varphi(x) - \varphi(y) | }{d_X(x,y)}
$$
and the Markov operator $P_\mu$ on these continuous functions by 
$$
P_\mu\varphi(x) = \int_G \varphi(gx) \, d\mu(g).
$$

\begin{lemma}\label{lemma:lip}
Let $K \subset G$ be compact and let $\mu, \mu'$ be probability measures on $G$ supported in $K$. Set $L_K = \sup_{g\in K} N(g)^2$.
\begin{enumerate}[label=(\roman*)]
    \item The Markov operator $P_{\mu}$ acts on Lipschitz functions and $\mathrm{Lip}(P_{\mu}\varphi) \leq L_K\,\mathrm{Lip}(\varphi)$.
    \item The dual of the Markov operator $P_\mu$, still denoted by $P_{\mu}$, is a Lipschitz map of $\mu$ in the Wasserstein metric. More precisely, for any $\varphi \in \mathscr{C}(X)$, 
$$
\|P_{\mu'}\varphi - P_{\mu}\varphi\|_{\infty} \leq \sqrt{L_K}\, \mathrm{Lip}(\varphi) \, d_{\mathrm W}(\mu',\mu).
$$
\end{enumerate}
\end{lemma}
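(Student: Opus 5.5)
The plan is to reduce both parts to one estimate: a Lipschitz bound for the action of a single matrix $g$ on projective space,
\[
d_X(gx,gy) \le \|g\|\,\|g^{-1}\|\, d_X(x,y) \le N(g)^2\, d_X(x,y),
\]
together with a bound for the dependence of $gx$ on $g$ at fixed $x$. For the first inequality I take unit representatives $u,v$ of $x,y$. Then
\[
\|gu\wedge gv\| \le \|\wedge^2 g\|\,\|u\wedge v\| = \sigma_1(g)\sigma_2(g)\|u\wedge v\| \le \|g\|^2\|u\wedge v\|,
\]
and $\|gu\|,\|gv\|\ge \|g^{-1}\|^{-1}$. Hence
\[
d_X(gx,gy)\le \|g\|^2\|g^{-1}\|^2 d_X(x,y) \le N(g)^4\, d_X(x,y).
\]
That is too weak by a square, so I refine it: since $\|gu\|\,\|gv\|\ge \sigma_d(g)^2$, the quotient is at most $\sigma_1\sigma_2/\sigma_d^2 \le \|g\|\,\|g^{-1}\|\cdot(\sigma_2/\sigma_d)$. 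Some care is needed here. A clean route is to note that $\sigma_1\sigma_2/\sigma_d^2 \le \|g\|^2\|g^{-1}\|^2$, which is not $\le N(g)^2$. I would instead use the distance formula directly and take $L_K$ to be whatever constant this argument produces. The claimed $N(g)^2$ should follow from $\sigma_1\sigma_2/\sigma_d^2 \le (\sigma_1/\sigma_d)^2 \le N(g)^4$; if only $N^4$ comes out, the constant just changes and nothing later is affected. I will aim for $\|g\|\,\|g^{-1}\|\le N(g)^2$, which is the standard bound for the chordal metric.

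\textbf{Part (i).} With that bound in hand,
\[
|P_\mu\varphi(x)-P_\mu\varphi(y)| \le \int_G |\varphi(gx)-\varphi(gy)|\,d\mu(g) \le \mathrm{Lip}(\varphi)\, d_X(x,y)\sup_{g\in K}N(g)^2 ,
\]
which gives $\mathrm{Lip}(P_\mu\varphi)\le L_K\,\mathrm{Lip}(\varphi)$.

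\textbf{Part (ii).} Fix $x=[u]$ with $\|u\|=1$ and consider $\psi_x(g)=\varphi(gx)$ on $K$. I need $\psi_x$ to be Lipschitz in $g$ with constant $\sqrt{L_K}\,\mathrm{Lip}(\varphi)=\sup_K N\cdot \mathrm{Lip}(\varphi)$. For $g,h\in K$,
\[
d_X(gx,hx)\le \frac{\|gu-hu\|}{\|gu\|}\le \|g^{-1}\|\,\|g-h\| \le N(g)\,\|g-h\|.
\]
Here I use that for vectors $a,b$ one has $\frac{\|a\wedge b\|}{\|a\|\|b\|}\le \frac{\|a-b\|}{\|a\|}$, since $a\wedge b = a\wedge(b-a)$.

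To apply the definition of $d_{\mathrm W}$ on $G$, which quantifies over bounded continuous functions on all of $G$ that are $1$-Lipschitz, I extend $\psi_x/(\sqrt{L_K}\mathrm{Lip}\varphi)$ from $K$ to a bounded $1$-Lipschitz function on $G$. McShane extension does this, truncated by the bound $\|\varphi\|_\infty$, and truncation preserves Lipschitz constants. Since both measures live on $K$, their integrals against the extension agree with their integrals against $\psi_x$. This yields
\[
|P_{\mu'}\varphi(x)-P_\mu\varphi(x)|\le \sqrt{L_K}\,\mathrm{Lip}(\varphi)\,d_{\mathrm W}(\mu',\mu)
\]
uniformly in $x$.

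The main obstacle is getting the sharp constant $N(g)^2$ in (i). My first attempt will be the estimate via $\|u\wedge v\|$ above. If that only gives $N^4$, I will note that the exponent is immaterial for the later arguments.
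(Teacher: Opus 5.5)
Part (ii) of your proposal is correct and follows the paper's argument. The McShane-plus-truncation step adds care the paper skips: $d_{\mathrm W}$ on $G$ is tested against functions that are $1$-Lipschitz on all of $G$, whereas $g\mapsto\varphi(gx)$ is only controlled on $K$. One minor slip: the identity giving the denominator $\|a\|$ is $a\wedge b=(a-b)\wedge b$, not $a\wedge(b-a)$.

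\textbf{The gap is in part (i).} You do not prove the stated constant $L_K=\sup_K N(g)^2$. Your estimate uses the global $\sigma_2(g)$ in the numerator, via $\|gu\wedge gv\|\le\|\wedge^2 g\|\,\|u\wedge v\|$, and the global $\sigma_d(g)$ twice in the denominator, via $\|gu\|,\|gv\|\ge\sigma_d$. These do not cancel. The result is $\sigma_1\sigma_2/\sigma_d^2$, which you can only bound by $(\sigma_1/\sigma_d)^2\le N(g)^4$, i.e.\ $\mathrm{Lip}(P_\mu\varphi)\le L_K^2\,\mathrm{Lip}(\varphi)$. Your remark that $N(g)^2$ ``should follow'' from $(\sigma_1/\sigma_d)^2\le N(g)^4$ is a non sequitur. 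The inequality $d_X(gx,gy)\le\|g\|\,\|g^{-1}\|\,d_X(x,y)$ that you ``aim for'' is never established. Settling for $N^4$ proves a weaker lemma than the one stated, even though, as you say, downstream only the constant $C_K$ in Lemma~\ref{lemma:estimate1} would change.

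\textbf{The missing idea} is to spend only one factor $\|g\|$ and one factor $\|g^{-1}\|$. The paper does this by restricting $g$ to the plane $E=\mathbb{R}u\oplus\mathbb{R}v$. If $s_1\ge s_2$ are the singular values of $g|_E$, then $\|gu\wedge gv\|=s_1s_2\|u\wedge v\|$, while $\|gu\|\ge s_2\|u\|$ and $\|gv\|\ge s_2\|v\|$. So the ratio $d_X(gx,gy)/d_X(x,y)$ is at most $s_1/s_2\le\sigma_1(g)/\sigma_d(g)=\|g\|\,\|g^{-1}\|\le N(g)^2$. The common factor $s_2$ cancels, and this is exactly what your global bounds lose.

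Alternatively, your own trick from part (ii) suffices. For unit $u,v$ and $t=\langle u,v\rangle$ one has $\|v-tu\|=\|u\wedge v\|=d_X(x,y)$ and $gu\wedge gv=gu\wedge g(v-tu)$. Hence
\[
d_X(gx,gy)\le\frac{\|gu\|\,\|g\|\,\|v-tu\|}{\|gu\|\,\|gv\|}\le\|g\|\,\|g^{-1}\|\,d_X(x,y).
\]
With either argument, part (i) then follows exactly as you wrote it.
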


\begin{proof}
(i) For any $x, y \in X$, a direct computation of the operator yields:
$$
\left| P_{\mu}\varphi(x) - P_{\mu}\varphi(y) \right| \leq \int_G \left| \varphi(gx) - \varphi(gy) \right| \, d \mu(g).
$$
Using the Lipschitz property of $\varphi$, we can bound the integrand:
$$
\int_G \left| \varphi(gx) - \varphi(gy) \right| \, d \mu(g) \leq \left( \int_G \frac{d_X(gx, gy)}{d_X(x,y)} \, d\mu(g) \right) \mathrm{Lip}(\varphi) \, d_X(x,y).
$$
Because the quotient is
$$\frac{d_X(gx, gy)}{d_X(x,y)}  = \frac{\| gu \wedge gv \| }{  \|u\wedge v\|}  \frac{\|u \|}{\|gu\|} \frac{\|v \|}{\|gv\|} 
\le \frac{ | \det(g|_{\mathbb{R}u \oplus \mathbb{R}v}) | }{ \sigma_2(g|_{\mathbb{R}u \oplus \mathbb{R}v}) ^2 } = \frac{\sigma_1(g|_{\mathbb{R}u \oplus \mathbb{R}v}) }{\sigma_2(g|_{\mathbb{R}u \oplus \mathbb{R}v}) } \le \frac{\sigma_1(g)}{\sigma_d(g)},$$ 
the integral ratio is bounded by $\sup_{g \in K} \|g\| \|g^{-1}\| \leq L_K$.

(ii) For any fixed $x = \mathbb{R}v \in X$ with $\|v\| = 1$, consider the function $f_x: G \to \mathbb{R}$ defined by $f_x(g) = \varphi(gx)$. We determine the Lipschitz constant of $f_x$ restricted to the compact set $K$. For any $g, g' \in K$, we evaluate the projective distance between their actions on $x$:
$$
d_X(g x, g' x) = |\sin \angle(g v, g' v)| \leq \frac{\|g v - g' v\|}{\|g' v\|}.
$$
The numerator is bounded by $\|g v - g' v\| \leq \|g - g'\| \|v\| = \|g - g'\|$. For the denominator, we apply the relation $\|v\| = \|(g')^{-1} g' v\| \leq \|(g')^{-1}\| \|g' v\|$, implying $\|g' v\| \geq \|(g')^{-1}\|^{-1}$. Substituting these bounds yields:
$$
d_X(g x, g' x) \leq \|(g')^{-1}\| \|g - g'\| \leq \left( \sup_{h \in K} \|h^{-1}\| \right) \|g - g'\|.
$$
Therefore, the composition $f_x$ satisfies $\mathrm{Lip}(f_x|_K) \leq \mathrm{Lip}(\varphi) \sup_{h \in K} \|h^{-1}\|$. Using the dual formulation of the Wasserstein distance evaluated at $x$:
$$
\left| P_{\mu'}\varphi(x) - P_{\mu}\varphi(x) \right| = \left| \int_G f_x(g) \, d(\mu' - \mu)(g) \right| \leq \mathrm{Lip}(f_x|_K) d_{\mathrm W}(\mu', \mu).
$$
Since $N(g) = \max(\|g\|, \|g^{-1}\|) \geq 1$, we have $\sup_{h \in K} \|h^{-1}\| \leq \sup_{h \in K} N(h) = \sqrt{L_K}$. Taking the supremum over all $x \in X$ yields the desired uniform bound.
\end{proof}

\subsection{Exterior powers and convolutions}
Two further constructions will be used repeatedly. The exterior powers allow us to reduce regularity of the full Lyapunov spectrum to that of the top exponent, and the convolution powers handle periodicity of the random walk on finite groups. We recall the relevant facts, following Viana~\cite{LLE}.

For each $k \in \{1, \dots, d\}$, let $\wedge^k \mathbb{R}^d$ denote the $k$-th exterior power of $\mathbb{R}^d$. Any matrix $g \in G$ induces an invertible linear map $\wedge^k g$ on $\wedge^k \mathbb{R}^d$ defined on decomposable vectors by 
$$
(\wedge^k g)(v_1 \wedge \dots \wedge v_k) = gv_1 \wedge \dots \wedge gv_k.
$$
We denote by $\wedge^k \mu$ the pushforward of $\mu$ under the map $g \mapsto \wedge^k g$. The operator norm on $\wedge^k \mathbb{R}^d$ induced by the standard Euclidean norm satisfies $\|\wedge^k g\| \le \|g\|^k$. A fundamental property of Lyapunov exponents is that the top Lyapunov exponent of the exterior power corresponds to the sum of the top $k$ exponents of the original measure:
$$
\lambda_1(\wedge^k \mu) = \sum_{l=1}^k \lambda_l(\mu).
$$
Consequently, we can recover the individual exponents via the difference $\lambda_k(\mu) = \lambda_1(\wedge^k \mu) - \lambda_1(\wedge^{k-1} \mu)$ for $k \ge 2$, with the convention $\lambda_1(\wedge^0 \mu) = 0$. This reduces the regularity analysis of the entire Lyapunov spectrum to the study of the first Lyapunov exponent on the exterior powers.

Additionally, to circumvent periodic behaviors in the random walk on finite groups, we define the $p$-th convolution $\mu^{*p}$ as the image of the product measure $\mu^{\otimes p}$ under the multiplication map:
$$
(g_1, \ldots, g_p) \mapsto g_p \cdots g_1.
$$

The following lemma records the multiplicativity of Lyapunov exponents under convolution powers, which will be used in the proof of Theorem~\ref{thmA} to reduce the periodic case to the aperiodic one.

\begin{lemma} \label{lemma:convolution_le}
Let $\mu$ be a probability measure on $G$ with a finite first moment. For any integer $p \in \mathbb{N}^*$, the Lyapunov exponents of the convolution power $\mu^{*p}$ satisfy
$$
\lambda_k(\mu^{*p}) = p \lambda_k(\mu)
$$
for every $k \in \{1, \ldots, d\}$.
\end{lemma}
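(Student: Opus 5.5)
The plan is to identify the random product for $\mu^{*p}$ with a subsequence of the random product for $\mu$, and then take limits along that subsequence. Let $(g_n)_{n\in\mathbb{N}}$ be i.i.d.\ with law $\mu$, and define the grouped products
$$
h_j = g_{jp}\, g_{jp-1} \cdots g_{(j-1)p+1}, \qquad j \ge 1 .
$$
Since the $g_n$ are independent and the blocks are disjoint, the $h_j$ are i.i.d. By the definition of the convolution power, each $h_j$ has law $\mu^{*p}$. Moreover, telescoping the blocks gives
$$
h_m \cdots h_1 = g_{mp} \cdots g_1 .
$$

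Before using the definition of $\lambda_k(\mu^{*p})$, we check that $\mu^{*p}$ has a finite first moment. From $\|gh\|\le\|g\|\|h\|$ and $\|(gh)^{-1}\| \le \|h^{-1}\|\|g^{-1}\|$ we get the submultiplicativity $N(gh)\le N(g)N(h)$. Hence
$$
\int \log N \, d\mu^{*p} \le p\int \log N\, d\mu < \infty .
$$
It follows that $\lambda_k(\mu^{*p})$ is well defined. It is given almost surely by
$$
\lambda_k(\mu^{*p}) = \lim_{m\to\infty}\frac1m \log\sigma_k(h_m\cdots h_1),
$$
where we may use this particular realization $(h_j)$, because the almost sure limit depends only on the law of the sequence.

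Now we compute along the subsequence. Almost surely,
$$
\frac1m \log \sigma_k(h_m\cdots h_1) = p \cdot \frac{1}{mp}\log\sigma_k(g_{mp}\cdots g_1).
$$
By the definition of $\lambda_k(\mu)$, the full sequence $\frac1n\log\sigma_k(g_n\cdots g_1)$ converges almost surely to $\lambda_k(\mu)$, so the subsequence $n=mp$ converges to the same limit. Intersecting the two full-measure events yields $\lambda_k(\mu^{*p}) = p\lambda_k(\mu)$ for each $k$.

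There is no real obstacle here. The only points that need care are the moment condition, handled above by submultiplicativity of $N$, and the observation that the almost sure limit for $\mu^{*p}$ may be computed on this particular i.i.d.\ realization.
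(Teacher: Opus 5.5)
Your proposal is correct and follows the paper's proof essentially verbatim: you group the i.i.d.\ sequence into blocks $h_m = g_{mp}\cdots g_{(m-1)p+1}$ with law $\mu^{*p}$, use $h_m\cdots h_1 = g_{mp}\cdots g_1$, and pass to the subsequence $n = mp$. Your extra check that $\mu^{*p}$ has finite first moment, via $N(gh)\le N(g)N(h)$, is a small but welcome detail that the paper leaves implicit.
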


\begin{proof}
Let $(g_n)_{n \in \mathbb{N}^*}$ be a sequence of independent random variables distributed according to $\mu$. Consider the block products $h_m = g_{mp} \cdots g_{(m-1)p+1}$ for $m \in \mathbb{N}^*$. The sequence $(h_m)_{m \in \mathbb{N}^*}$ consists of independent random variables distributed according to $\mu^{*p}$. By the definition of Lyapunov exponents,
$$
\lambda_k(\mu^{*p}) = \lim_{m \to \infty} \frac{1}{m} \log \sigma_k(h_m \cdots h_1) \quad \text{a.s.}
$$
Since $h_m \cdots h_1 = g_{mp} \cdots g_1$, we may rewrite this limit as
$$
\lambda_k(\mu^{*p}) = \lim_{m \to \infty} \frac{p}{mp} \log \sigma_k(g_{mp} \cdots g_1) = p \left( \lim_{n \to \infty} \frac{1}{n} \log \sigma_k(g_n \cdots g_1) \right) = p \lambda_k(\mu),
$$
where the last limit holds almost surely by the definition of Lyapunov exponents for $\mu$.
\end{proof}

The following lemma controls the Wasserstein distance under these operations.

\begin{lemma} \label{lemma:trivial}
Let $K \subset G$ be a compact set and let $\mu, \mu'$ be probability measures supported in $K$. For the constant $L_K \geq 1$ defined in Lemma \ref{lemma:lip}, we have for any $p \in \mathbb{N}^*$, 
$$
d_{\mathrm W}( (\mu')^{*p}, \mu ^{*p} ) \leq p \, L_K^{(p-1)/2} d_{\mathrm W}(\mu', \mu),
$$
and for any $k \in \{ 1, \ldots, d \}$,
$$
d_{\mathrm W}(\wedge ^{k}(\mu'), \wedge ^{k}\mu ) \leq k\, L_K^{(k-1)/2} d_{\mathrm W}(\mu', \mu) .
$$
\end{lemma}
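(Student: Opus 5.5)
The plan is to work directly with the dual formulation of $d_{\mathrm W}$ on $G$. We write both measures as pushforwards of product measures and estimate one factor at a time by telescoping. The basic tool is the coupling (transport) characterization, or equivalently a direct argument with test functions. Let $\varphi:G\to\mathbb R$ with $\mathrm{Lip}(\varphi)\le 1$. Then
\[
\int \varphi\, d\big((\mu')^{*p}-\mu^{*p}\big)=\sum_{j=1}^{p}\int \varphi(g_p\cdots g_1)\, d\big(\nu_j-\nu_{j-1}\big),
\]
where $\nu_j=\mu'^{\otimes j}\otimes\mu^{\otimes(p-j)}$, with the first $j$ coordinates distributed by $\mu'$. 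In the $j$-th term the two product measures differ only in the $j$-th coordinate. By Fubini, we fix all other coordinates $g_i\in K$ and integrate the function
\[
h\mapsto \varphi(a\,h\,b),\qquad a=g_p\cdots g_{j+1},\quad b=g_{j-1}\cdots g_1,
\]
against $\mu'-\mu$.

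Next we bound the Lipschitz constant of this function on $K$. We have
\[
\|ahb-ah'b\|\le\|a\|\,\|b\|\,\|h-h'\|\le \Big(\sup_K\|g\|\Big)^{p-1}\|h-h'\| \le L_K^{(p-1)/2}\|h-h'\|,
\]
since $\sup_K\|g\|\le\sup_K N(g)=\sqrt{L_K}$. So $h\mapsto\varphi(ahb)$ restricted to $K$ is $L_K^{(p-1)/2}$-Lipschitz. Each term of the sum is therefore at most $L_K^{(p-1)/2}d_{\mathrm W}(\mu',\mu)$, and summing the $p$ terms gives the first inequality.

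There is a technicality: $d_{\mathrm W}$ is defined with bounded continuous functions on all of $G$, whereas we only control the Lipschitz constant on $K$. We handle this by extending via McShane, $\tilde f(h)=\inf_{k\in K}\big(f(k)+c\,\|h-k\|\big)$, truncated to stay bounded. This extension agrees with $f$ on $K$ and has the same Lipschitz constant $c$. Since both measures live on $K$, the integrals are unchanged. This is the same implicit step as in Lemma~\ref{lemma:lip}(ii).

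For exterior powers, $d_{\mathrm W}(\wedge^k\mu',\wedge^k\mu)\le \mathrm{Lip}(\wedge^k|_K)\,d_{\mathrm W}(\mu',\mu)$, because composing a $1$-Lipschitz test function with $\wedge^k$ gives a test function with Lipschitz constant $\mathrm{Lip}(\wedge^k|_K)$, after the same extension trick. So it suffices to show
\[
\|\wedge^k g-\wedge^k g'\|\le k\,L_K^{(k-1)/2}\|g-g'\|\qquad\text{for } g,g'\in K.
\]
We telescope on decomposable vectors:
\[
gv_1\wedge\cdots\wedge gv_k-g'v_1\wedge\cdots\wedge g'v_k=\sum_{i=1}^k g'v_1\wedge\cdots\wedge g'v_{i-1}\wedge(g-g')v_i\wedge gv_{i+1}\wedge\cdots\wedge gv_k .
\]
This exhibits $\wedge^kg-\wedge^kg'$ as a sum of $k$ maps of the form $A_1\wedge\cdots\wedge A_k$, each having one factor equal to $g-g'$ and the other $k-1$ factors in $K$. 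The norm of such a map is bounded by the product of the operator norms of its factors. Hence each map has norm at most $\|g-g'\|\,(\sup_K\|g\|)^{k-1}\le \|g-g'\|\,L_K^{(k-1)/2}$, which gives the factor $k L_K^{(k-1)/2}$.

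The main obstacle is justifying this norm bound for $A_1\wedge\cdots\wedge A_k$ on $\wedge^k\mathbb R^d$. The difficulty is that the sum above is only an identity on decomposable vectors, not a priori a sum of well-defined operators. To settle this, we write the sum as the multilinear symmetrization
\[
\sum_{i=1}^k (g')^{\otimes(i-1)}\otimes(g-g')\otimes g^{\otimes(k-i)}
\]
restricted to antisymmetric tensors. Antisymmetric tensors are preserved because the whole expression equals $g^{\otimes k}-g'^{\otimes k}$, and that difference commutes with the antisymmetrizer. The tensor-product operator norm is multiplicative, $\|A\otimes B\|=\|A\|\,\|B\|$, so each summand has norm at most $\|g-g'\|\,(\sup_K\|g\|)^{k-1}$, and the bound on $\|\wedge^k g-\wedge^k g'\|$ follows from the triangle inequality.
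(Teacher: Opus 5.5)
Your proposal is correct and is essentially the paper's proof: you use the same one-coordinate-at-a-time telescoping of the product measures with the Lipschitz bound $\|a\|\,\|b\|\le L_K^{(p-1)/2}$, and the same telescoping identity for $\wedge^k g-\wedge^k g'$ with each of the $k$ terms bounded by $\|g-g'\|\,L_K^{(k-1)/2}$. Your McShane extension off $K$ (needed only in the $\wedge^k$ part, since $h\mapsto\varphi(ahb)$ is already $\|a\|\,\|b\|$-Lipschitz on all of $G$) and your realization of the telescoping terms as $(g')^{\otimes(i-1)}\otimes(g-g')\otimes g^{\otimes(k-i)}$ acting on antisymmetric tensors make rigorous two steps that the paper leaves implicit.
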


\begin{proof}
For any $g \in K$, its operator norm is strictly bounded by $\|g\| \leq N(g) \leq \sqrt{L_K}$.

For the first inequality, let $\varphi \in \mathscr{C}(G)$ be a $1$-Lipschitz function. We rewrite the difference in the integrals with respect to the $p$-th convolutions as a telescopic sum where only one coordinate changes at a time:
$$
\int_G \varphi \, d((\mu')^{*p}) - \int_G \varphi \, d(\mu^{*p}) = \sum_{m=1}^p \int_{G^p} \varphi(g_p \dots g_{m+1} g g_{m-1} \dots g_1) \, d\mu'(g_p) \dots d(\mu' - \mu)(g) \dots d\mu(g_1).
$$
For a fixed choice of $g_i \in K$ (for $i \neq m$), consider the single-variable function 
$$f(g) = \varphi(g_p \dots g_{m+1} g\, g_{m-1} \dots g_1).$$ 
The distance between two such products evaluated at $g$ and $g'$ is bounded using the submultiplicativity of the operator norm:
$$
\| g_p \dots g_{m+1} g\, g_{m-1} \dots g_1 - g_p \dots g_{m+1} g'\, g_{m-1} \dots g_1 \| \leq \left( \prod_{i > m} \|g_i\| \right) \|g - g'\| \left( \prod_{i < m} \|g_i\| \right).
$$
Because $\varphi$ is $1$-Lipschitz, the Lipschitz constant of $f$ on $K$ satisfies:
$$
\mathrm{Lip}(f|_K) \leq \prod_{i \neq m} \|g_i\| \leq \prod_{i \neq m} \sqrt{L_K} = L_K^{(p-1)/2}.
$$
Therefore, the integral with respect to $d(\mu' - \mu)(g)$ in each of the $p$ terms is bounded by $L_K^{(p-1)/2} d_{\mathrm W}(\mu', \mu)$. Summing over all $p$ components yields the first bound.

For the second inequality, let $\psi \in \mathscr{C}(\mathrm{GL}(\wedge^k \mathbb{R}^d))$ be a $1$-Lipschitz function. We aim to bound the difference:
$$
\left| \int_G \psi(\wedge^k g) \, d(\mu' - \mu)(g) \right|.
$$
This requires finding the Lipschitz constant of the map $g \mapsto \wedge^k g$ restricted to $K$. For any $g, g' \in K$, we express the difference of their exterior powers using another telescopic sum:
$$
\wedge^k g - \wedge^k g' = \sum_{m=1}^k (\wedge^{k-m} g) \wedge (g - g') \wedge (\wedge^{m-1} g').
$$
Taking the operator norm on the exterior algebra and using the geometric fact that $\|\wedge^j g\| \leq \|g\|^j$, we obtain:
$$
\|\wedge^k g - \wedge^k g'\| \leq \sum_{m=1}^k \|g\|^{k-m} \|g - g'\| \|g'\|^{m-1} \leq \sum_{m=1}^k (\sqrt{L_K})^{k-m} (\sqrt{L_K})^{m-1} \|g - g'\| = k L_K^{(k-1)/2} \|g - g'\|.
$$
Thus, the map $g \mapsto \wedge^k g$ is $(k L_K^{(k-1)/2})$-Lipschitz on $K$, and the bound on the Wasserstein distance follows immediately.
\end{proof}

\subsection{Furstenberg's formula and a Lipschitz bound}
Denote the norm cocycle by 
$$
\Phi(g, \mathbb{R}v) = \log \frac{\|gv\|}{\|v\|},
$$ 
and the drift function by 
$$
\phi _{\mu}(x) = \int_G \Phi(g,x) \, d\mu(g).
$$

\begin{lemma} \label{lemma:drift_lip}
Let $K \subset G$ be a compact set and let $\mu$ be a probability measure supported in $K$. For $L_K$ defined in Lemma \ref{lemma:lip}, the norm cocycle $\Phi(g, x)$ satisfies the Lipschitz bound:
$$
\left| \Phi(g, x) - \Phi(g', x') \right| \leq \sqrt{L_K} \, d_G(g, g') + \sqrt{2} L_K \, d_X(x, x'),
$$
for any $g, g' \in K$ and $x, x' \in X$. Consequently, the drift function $\phi_\mu$ is Lipschitz continuous on $X$ with $\mathrm{Lip}(\phi_\mu) \le \sqrt{2} L_K$.
\end{lemma}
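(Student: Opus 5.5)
We split the difference with the triangle inequality,
$$
|\Phi(g,x)-\Phi(g',x')| \le |\Phi(g,x)-\Phi(g',x)| + |\Phi(g',x)-\Phi(g',x')|,
$$
and bound the two terms separately. We take $d_G(g,g')=\|g-g'\|$.

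\emph{Variation in $g$.} Fix $x=\mathbb{R}v$ with $\|v\|=1$. Then
$$
\Phi(g,x)-\Phi(g',x)=\log\|gv\|-\log\|g'v\|.
$$
By the mean value theorem for $\log$ on the interval between $\|gv\|$ and $\|g'v\|$,
$$
|\log\|gv\|-\log\|g'v\|| \le \frac{\bigl|\|gv\|-\|g'v\|\bigr|}{\min(\|gv\|,\|g'v\|)}.
$$
The numerator is at most $\|g-g'\|$. For the denominator, the argument already used in Lemma~\ref{lemma:lip}(ii) gives $\|hv\|\ge\|h^{-1}\|^{-1}\ge L_K^{-1/2}$ for every $h\in K$. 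Hence this term is at most $\sqrt{L_K}\,\|g-g'\|$.

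\emph{Variation in $x$.} Fix $g\in K$, and take unit vectors $u,v$ representing $x,x'$, with signs chosen so that $\langle u,v\rangle\ge 0$. Then
$$
\|u-v\|=\sqrt{2-2\cos\theta}\le\sqrt{2}\,\sin\theta=\sqrt{2}\,d_X(x,x'),
$$
since $\theta\in[0,\pi/2]$ gives $1-\cos\theta\le 1-\cos^2\theta=\sin^2\theta$. This is where the factor $\sqrt2$ comes from. Arguing as in the $g$-step,
$$
|\log\|gu\|-\log\|gv\|| \le \frac{\|g(u-v)\|}{\min(\|gu\|,\|gv\|)} \le \|g\|\,\|g^{-1}\|\,\|u-v\| \le L_K\sqrt{2}\,d_X(x,x').
$$
Here we used $\|g\|\,\|g^{-1}\|\le N(g)^2\le L_K$.

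\emph{Drift function.} Integrating the $x$-estimate against $\mu$ (taking $g=g'$) gives
$$
|\phi_\mu(x)-\phi_\mu(x')|\le\sqrt2\,L_K\,d_X(x,x'),
$$
which is the claimed Lipschitz bound for $\phi_\mu$.

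The only delicate point is the lift from projective to Euclidean distance: we must choose representatives with nonnegative inner product so that the chord length $\|u-v\|$ is controlled by $\sin\theta$ with constant $\sqrt2$. The remaining steps are routine mean-value estimates for the logarithm.
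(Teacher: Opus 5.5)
Your proposal is correct and follows the paper's proof essentially step for step: the same triangle-inequality split, the same lower bound $\|hv\|\ge\|h^{-1}\|^{-1}\ge L_K^{-1/2}$ for the variation in $g$, and the same choice of representatives with nonnegative inner product giving $\|u-v\|\le\sqrt{2}\,d_X(x,x')$ for the variation in $x$. You verify that last inequality via $1-\cos\theta\le\sin^2\theta$ where the paper writes $2\sin(\theta/2)\le\sqrt{2}\sin\theta$, and the drift bound then follows exactly as in the paper by setting $g=g'$ and integrating against $\mu$ supported in $K$.
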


\begin{proof}
For any $g, g' \in K$ and $x, x' \in X$, we can bound the difference using the triangle inequality:
$$
\left| \Phi(g, x) - \Phi(g', x') \right| \leq \left| \Phi(g, x) - \Phi(g', x) \right| + \left| \Phi(g', x) - \Phi(g', x') \right|.
$$

For the first term, let the projective class $x \in X$ be represented by a unit vector $u \in \mathbb{R}^d$. We evaluate the difference with respect to the group variable:
$$
|\Phi(g, x) - \Phi(g', x)| = \left| \log \frac{\|g u\|}{\|g' u\|} \right| \leq \frac{|\|g u\| - \|g' u\||}{\min(\|g u\|, \|g' u\|)} \leq \frac{\|(g - g')u\|}{\min(\|g u\|, \|g' u\|)}.
$$
For any matrix $h \in K$, we have the lower bound $\|hu\| \geq \|h^{-1}\|^{-1} \ge (\sup_{h \in K} \|h^{-1}\|)^{-1}$. Applying this to the denominator yields:
$$
|\Phi(g, x) - \Phi(g', x)| \leq \left( \sup_{h \in K} \|h^{-1}\| \right) \|(g - g')u\| \leq \sqrt{L_K} \|g - g'\|.
$$

For the second term, let $x, x' \in X$ be represented by unit vectors $u, v \in \mathbb{R}^d$. By choosing the appropriate signs for $u$ and $v$, we ensure the angle $\theta$ between them is in $[0, \pi/2]$. The Euclidean distance relates to the angle via $\|u - v\| = \sqrt{2 - 2\cos\theta} = 2 \sin(\theta/2)$. For angles in this range, we have the geometric bound $\|u - v\| = 2 \sin(\theta/2) \leq \sqrt{2} \sin \theta = \sqrt{2} d_X(x, x')$. Evaluating the absolute difference of the cocycle for the fixed matrix $g'$:
$$
|\Phi(g', x) - \Phi(g', x')| = \left| \log \frac{\|g'u\|}{\|g'v\|} \right| \leq \frac{|\|g'u\| - \|g'v\||}{\min(\|g'u\|, \|g'v\|)} \leq \frac{\|g'(u - v)\|}{\|g'^{-1}\|^{-1}} \leq \|g'\| \|g'^{-1}\| \|u - v\|.
$$
Substituting the geometric bound, we find:
$$
|\Phi(g', x) - \Phi(g', x')| \leq \sqrt{2} \|g'\| \|g'^{-1}\| d_X(x, x') \leq \sqrt{2} \sup_{h \in K} N(h)^2 d_X(x, x') = \sqrt{2} L_K d_X(x, x').
$$
Combining these two components establishes the Lipschitz inequality.

Finally, the Lipschitz bound for the drift function follows immediately by setting $g=g'$, and integrating over $G$.
\end{proof}

A probability measure $\nu$ on $X$ is called \emph{$\mu$-stationary} if $P_\mu \nu = \nu$, meaning that for any $\varphi \in \mathscr{C}(X)$,
$$
\int_X \varphi(x) \, d\nu(x) = \int_X \int_G \varphi(gx) \, d\mu(g) \, d\nu(x).
$$
We call a $\mu$-stationary probability measure \emph{maximal} if it attains the maximum in the following Furstenberg's formula. Every $\mu$-stationary probability measure on $X$ is maximal when $\lambda_1(\mu) = \lambda_d(\mu)$.

\begin{thm}[Furstenberg's formula \cite{Fur63}]\label{lemma:formula}
Let $\mu$ be a probability measure on $G$ with finite first moment. Then
$$
\lambda_{1}(\mu) = \max_{\nu} \int_{X} \int_{G} \Phi(g,x) \, d\mu(g) d\nu(x),
$$
where the maximum is taken over all $\mu$-stationary probability measures $\nu$ on $X$.
\end{thm}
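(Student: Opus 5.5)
The statement to prove is Furstenberg's formula: $\lambda_1(\mu)$ equals the maximum, over $\mu$-stationary $\nu$, of $\int_X \phi_\mu \, d\nu$. The plan is to prove the two inequalities separately, using the cocycle identity
$$
\log\|g_n\cdots g_1 v\| - \log\|v\| = \sum_{j=1}^{n} \Phi\bigl(g_j,\, g_{j-1}\cdots g_1 x\bigr), \qquad x=\mathbb{R}v .
$$

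\textbf{Upper bound.} Fix any $\mu$-stationary $\nu$. Integrate the cocycle identity over $\mu^{\otimes n}\otimes\nu$. By independence, the law of $g_{j-1}\cdots g_1 x$ is $P_\mu^{\,j-1}\nu=\nu$, and it is independent of $g_j$. Hence each summand has integral $\int_X \phi_\mu\, d\nu$, and
$$
n\int_X\phi_\mu\,d\nu=\int_X\int \log\|g_n\cdots g_1 v\|\,d\mu^{\otimes n}\,d\nu(x)\le \int \log\|g_n\cdots g_1\|\,d\mu^{\otimes n}.
$$
Here $v$ is a unit representative of $x$. Dividing by $n$ and using subadditivity (Kingman, or simply Fekete for the expectations), the right-hand side converges to $\lambda_1(\mu)$. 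The finite first moment guarantees integrability throughout, since $|\Phi(g,x)|\le \log N(g)$. This gives $\int\phi_\mu\,d\nu\le\lambda_1(\mu)$ for every stationary $\nu$.

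\textbf{Lower bound and attainment.} Here a stationary measure achieving $\lambda_1(\mu)$ must be constructed. I would work on the skew space $G^{\mathbb{N}}\times X$ with the Markov chain $x_{j}=g_j x_{j-1}$. Choose a sequence of random unit vectors $v_n$ that nearly realize the norm, for instance $v_n$ the top right-singular vector of $g_n\cdots g_1$, so that $\|g_n\cdots g_1 v_n\|=\|g_n\cdots g_1\|$.

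It is technically cleaner to avoid random starting points. Pick a deterministic orthonormal basis $e_1,\dots,e_d$ and note that $\|h\|\le \sqrt d\,\max_i\|he_i\|$. Then, with $\nu_0$ the uniform measure on $\{[e_i]\}$,
$$
\frac1n\int\log\|g_n\cdots g_1\|\,d\mu^{\otimes n}\le \frac{\log\sqrt d}{n}+\frac1n\sum_i\int\log\|g_n\cdots g_1e_i\|\,d\mu^{\otimes n}.
$$
Up to the factor $d$ and the $O(\log d/n)$ error, the right-hand side is controlled by $\frac1n\sum_{j<n}\int\phi_\mu\,d(P_\mu^{\,j}\nu_0)=\int\phi_\mu\,d\nu_n$, where $\nu_n=\frac1n\sum_{j<n}P_\mu^{\,j}\nu_0$ are Cesàro averages. (The factor $d$ issue is handled by treating each $e_i$ separately and keeping the $i$ with the largest value, along a subsequence.)

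Since $X$ is compact, a subsequence $\nu_{n_k}$ converges weak$^*$ to some $\nu$. This $\nu$ is $\mu$-stationary, because $P_\mu\nu_n-\nu_n=(P_\mu^{\,n}\nu_0-\nu_0)/n\to0$ and the dual operator $P_\mu$ is weak$^*$ continuous on $\mathscr{P}(X)$ ($\varphi(gx)$ is continuous in $x$, so $P_\mu$ maps $\mathscr{C}(X)$ to itself). The drift $\phi_\mu$ is continuous by dominated convergence with the dominating function $\log N(g)$; in the compactly supported case this also follows from Lemma~\ref{lemma:drift_lip}. Therefore $\int\phi_\mu\,d\nu=\lim_k\int\phi_\mu\,d\nu_{n_k}\ge\lambda_1(\mu)$.

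\textbf{Existence of the maximum.} Combining the two bounds, the supremum is attained by this $\nu$, so it is a maximum. The main obstacle is the lower-bound step: justifying the identity between the expected growth $\frac1n\mathbb{E}\log\|g_n\cdots g_1\|$ and the almost-sure limit defining $\lambda_1$ (via Kingman's subadditive ergodic theorem), and handling the choice of starting vector without randomizing it in a way that breaks independence. The deterministic basis trick above is what I would try first.
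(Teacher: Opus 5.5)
The paper does not prove this statement (it is quoted from \cite{Fur63}), so your argument has to stand on its own. Your upper bound is correct, and so is the Ces\`aro/weak$^*$ compactness part of the lower bound. The gap is the choice of starting measure $\nu_0$.

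\textbf{The displayed inequality fails.} Replacing $\max_i$ by $\sum_i$ is invalid because the terms $\log\|g_n\cdots g_1e_i\|$ can be negative. Take $d=2$, $\mu=\delta_h$, $h=\mathrm{diag}(t,t^{-1})$ with $t>1$: the left side equals $\log t$, while the right side equals $\frac{\log\sqrt2}{n}$. Even if the inequality held, with $\nu_0$ uniform on the basis it would only give $\lambda_1(\mu)\le d\int\phi_\mu\,d\nu$.

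\textbf{The proposed repair is unjustified.} The correct inequality controls $\mathbb{E}\max_i\log\|g_n\cdots g_1e_i\|$. Your fix (``keep the $i$ with the largest value'') needs instead
$$\max_i\mathbb{E}\log\|g_n\cdots g_1e_i\|\ge\mathbb{E}\log\|g_n\cdots g_1\|-o(n),$$
and this does not follow, since $\max$ and $\mathbb{E}$ do not commute in that direction. The claim is true, but its natural justification is the Furstenberg--Kifer theorem~\cite{FK83}: vectors outside a proper $\Gamma_\mu$-invariant subspace grow at rate exactly $\lambda_1$. That theorem's proof itself relies on the formula you are proving.

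\textbf{No finitely supported $\nu_0$ can work uniformly.} If $[v_1]$ is an atom of $\nu_0$ of weight $p_1$, take $h=t\,uu^{T}+\epsilon(\mathrm{Id}-uu^{T})$ with $u\perp v_1$ a unit vector and $0<\epsilon\le t$. Then
$$\int\log\frac{\|hv\|}{\|v\|}\,d\nu_0-\log\|h\|\le p_1\log(\epsilon/t)\to-\infty .$$

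\textbf{The fix.} Randomize the starting point independently of $(g_j)$. This does not break independence; only starting points that depend on the $g_j$, such as the singular direction of the product, do. Take $\nu_0=m$, the $\mathrm{O}(d)$-invariant probability on $X$. If $u_h$ is a unit top right-singular vector of $h$, then $\|hv\|\ge\|h\|\,|\langle v,u_h\rangle|$ for unit $v$, so
$$\int_X\log\frac{\|hv\|}{\|v\|}\,dm(\mathbb{R}v)\ge\log\|h\|-c_d,\qquad c_d=-\int_X\log|\langle v,e_1\rangle|\,dm(\mathbb{R}v)<\infty .$$
Here $c_d$ is independent of $h$ by rotation invariance. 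Hence, for $\nu_n=\frac1n\sum_{j<n}P_\mu^{\,j}m$,
$$\int_X\phi_\mu\,d\nu_n\ge\frac1n\,\mathbb{E}\log\|g_n\cdots g_1\|-\frac{c_d}{n}\ge\lambda_1(\mu)-\frac{c_d}{n}.$$
This uses $\lambda_1(\mu)=\inf_n\frac1n\mathbb{E}\log\|g_n\cdots g_1\|$, from Fekete together with Furstenberg--Kesten. The rest of your argument (weak$^*$ limit points are stationary, $\phi_\mu$ is continuous, attainment of the maximum) then goes through unchanged.
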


\begin{rmk}
The value of $\lambda_1(\mu)$ given by Furstenberg's formula does not depend on the choice of Euclidean norm on $\mathbb{R}^d$ used to define $\Phi$. Indeed, replacing the norm by an equivalent one changes $\Phi(g, x)$ by a function of the form $f(gx) - f(x)$ for some bounded $f$, and the integral of such a coboundary against any stationary measure vanishes. In particular, when $\mu$ is conformal, the adapted norm $\|\cdot\|_c$ may be used in the formula in place of the standard norm.
\end{rmk}

\subsection{The conformal case}
We handle the conformal case separately. 
Denote by $\Gamma_\mu \subset G$ the subsemigroup generated by the support of any $\mu \in \mathscr{P}(G)$.
We call $\mu$ \emph{conformal} if $\mathbb{R}^d$ admits a $\Gamma_\mu$-invariant conformal structure. 
In this case, every $g\in\Gamma_\mu$ acts as a scalar multiple of an isometry, so the norm cocycle $\Phi(g,x)$ is independent of $x$ and equals $\frac{1}{d}\log|\det g|$. The Lyapunov exponents are therefore determined solely by the determinant, which linearizes the dependence on $\mu$ and yields Lipschitz continuity, as the following argument analogous to the one in~\cite{TVi20}.

\begin{prop} \label{prop:conformal}
Let $K \subset G$ be a compact set, and let $\mu, \mu'$ be probability measures on $G$ supported in $K$. Assume $\mu$ is conformal. There exists a constant $C(K,\mu)>0$ such that 
$$
\left| \lambda_{k}(\mu') - \lambda_{k}(\mu) \right| \leq C d_{\mathrm W}(\mu', \mu) ,\quad \text{for every $k \in \{  1, \ldots, d \}$} .
$$
\end{prop}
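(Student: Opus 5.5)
The plan is to work with the adapted norm $\|\cdot\|_c$ from the Remark after Theorem~\ref{lemma:formula}. With it, every $g\in\Gamma_\mu$ acts as a scalar multiple of a $\|\cdot\|_c$-isometry, so the corresponding norm cocycle $\Phi_c(g,\mathbb{R}v)=\log(\|gv\|_c/\|v\|_c)$ satisfies $\Phi_c(g,x)=\frac1d\log|\det g|$ for all $g\in\mathrm{supp}(\mu)$ and all $x\in X$. Integrating against $\mu$ gives, for every $x\in X$,
\[
\int_G \Phi_c(g,x)\,d\mu(g)=\frac1d\int_G\log|\det g|\,d\mu(g)=\lambda_1(\mu),
\]
where the last equality comes from Furstenberg's formula with any $\mu$-stationary $\nu$. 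So the drift of $\mu$ in the adapted norm is the constant $\lambda_1(\mu)$.

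\textbf{Step 1: the top exponent of $\mu'$.} Take a maximal $\mu'$-stationary measure $\nu'$. By Theorem~\ref{lemma:formula} and the Remark, which allows using $\|\cdot\|_c$ for $\mu'$ as well, we have $\lambda_1(\mu')=\int_X\int_G\Phi_c(g,x)\,d\mu'(g)\,d\nu'(x)$. Subtracting the constant $\lambda_1(\mu)$ gives
\[
\lambda_1(\mu')-\lambda_1(\mu)=\int_X\Big(\int_G\Phi_c(g,x)\,d(\mu'-\mu)(g)\Big)d\nu'(x).
\]
Using a maximal $\nu'$ yields an equality rather than an inequality, so both directions of the estimate are handled at once.

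\textbf{Step 2: the Wasserstein bound.} It remains to show that $g\mapsto\Phi_c(g,x)$ is Lipschitz on $K$, uniformly in $x$, with respect to the operator-norm distance on $G$. I would repeat the first half of the proof of Lemma~\ref{lemma:drift_lip}, inserting the equivalence constants $a\|v\|\le\|v\|_c\le b\|v\|$:
\[
\big|\log\|gu\|_c-\log\|g'u\|_c\big|\le\frac{\|(g-g')u\|_c}{\min(\|gu\|_c,\|g'u\|_c)}\le\frac{b}{a}\,\sqrt{L_K}\,\|g-g'\|\quad\text{for unit }u,
\]
using $\|hu\|\ge\|h^{-1}\|^{-1}$. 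One also needs to extend this function to a bounded Lipschitz function on $G$ without increasing the constant; McShane extension followed by truncation works. Then the dual definition of $d_{\mathrm W}$ gives $|\lambda_1(\mu')-\lambda_1(\mu)|\le \frac ba\sqrt{L_K}\,d_{\mathrm W}(\mu',\mu)$, with constant depending only on $K$ and $\mu$ (through $\|\cdot\|_c$).

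\textbf{Step 3: the other exponents.} Let $\langle\cdot,\cdot\rangle_c$ be the invariant inner product. The induced inner product on $\wedge^k\mathbb{R}^d$ makes each $\wedge^k g$, for $g\in\Gamma_\mu$, a scalar multiple of an isometry. Hence $\wedge^k\mu$ is conformal and is supported in the compact set $\wedge^kK$. Applying Step 2 to $\wedge^k\mu,\wedge^k\mu'$ and then the second bound of Lemma~\ref{lemma:trivial} gives $|\lambda_1(\wedge^k\mu')-\lambda_1(\wedge^k\mu)|\le C_k\,d_{\mathrm W}(\mu',\mu)$. The identity $\lambda_k=\lambda_1(\wedge^k\cdot)-\lambda_1(\wedge^{k-1}\cdot)$ then finishes the proof.

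I expect the main obstacle to be the bookkeeping in Step 2. The adapted norm differs from the standard one, so the Lipschitz constant of $g\mapsto\Phi_c(g,x)$ must be controlled with respect to the standard operator-norm distance used in $d_{\mathrm W}$, which is where the factor $b/a$ depending on $\mu$ enters. The same check is needed for the exterior powers, where the equivalence constants for the induced norms on $\wedge^k\mathbb{R}^d$ must be tracked. None of this is deep, but it is where the dependence $C=C(K,\mu)$ comes from.
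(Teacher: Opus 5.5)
Your proposal is correct and follows essentially the same route as the paper. Both arguments use the constant drift $\frac1d\int\log|\det g|\,d\mu$ in the adapted norm, obtain an exact identity for $\lambda_1(\mu')-\lambda_1(\mu)$ from a maximal $\mu'$-stationary measure, bound the Lipschitz constant of $g\mapsto\Phi_c(g,x)$ on $K$ uniformly in $x$, and then pass to exterior powers via Lemma~\ref{lemma:trivial}. Your explicit tracking of the norm-equivalence constants $b/a$ and your McShane extension to a bounded Lipschitz test function on $G$ fill in details the paper leaves implicit in its constant $C_1(K,\|\cdot\|_c)$.
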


\begin{proof}
Since $\mu$ is conformal, there exists an adapted Euclidean norm $\| \cdot \|_c$ on $\mathbb{R}^d$ such that every $g \in \Gamma_\mu$ acts as a similarity transformation. Let $\Phi_c(g, x) = \log \frac{\|gx\|_c}{\|x\|_c}$ be the norm cocycle with respect to this adapted norm.

For any $g \in \mathrm{supp}(\mu)$, the similarity factor is independent of $x$, meaning $\Phi_c(g, x) = \frac{1}{d} \log |\det g|$. Consequently, for any probability measure $\nu'$ on $X$, we can write the first Lyapunov exponent of $\mu$ as:
$$
\lambda_1(\mu) = \int_G \frac{1}{d} \log |\det g| \, d\mu(g) = \int_X \int_G \Phi_c(g, x) \, d\mu(g) \, d\nu'(x).
$$

Now let $\nu'$ be a maximal $\mu'$-stationary probability measure. By Furstenberg's formula (Theorem \ref{lemma:formula}):
$$
\lambda_1(\mu') = \int_X \int_G \Phi_c(g, x) \, d\mu'(g) \, d\nu'(x).
$$

Subtracting and using the fact that $\lambda_1(\mu)$ does not depend on $\nu'$, the difference in the first Lyapunov exponents reduces to:
$$
\lambda_1(\mu') - \lambda_1(\mu) = \int_X \left( \int_G \Phi_c(g, x) \, d(\mu' - \mu)(g) \right) d\nu'(x).
$$

For a fixed $x \in X$, the function $g \mapsto \Phi_c(g, x)$ is Lipschitz on the compact set $K$. Because $K$ is bounded and consists of invertible matrices, there exists a constant $C_1(K, \|\cdot\|_c) > 0$ such that $\sup_{x \in X} \mathrm{Lip}(\Phi_c(\cdot, x)) \leq C_1$. 
Thus, the inner integral is bounded directly by $C_1 d_{\mathrm W}(\mu', \mu)$, which gives:
$$
\left| \lambda_1(\mu') - \lambda_1(\mu) \right| \leq C_1 d_{\mathrm W}(\mu', \mu).
$$

For the remaining Lyapunov exponents $k \geq 2$, recall from the previous section that $\lambda_k(\mu) = \lambda_1(\wedge^k \mu) - \lambda_1(\wedge^{k-1} \mu)$. The conformal structure on $\mathbb{R}^d$ canonically induces a conformal structure on the exterior algebraic spaces $\wedge^k \mathbb{R}^d$, meaning the pushforward $\wedge^k \mu$ is also conformal. 

Applying the same argument to the exterior powers gives a constant $C_k'(K, \|\cdot\|_c) > 0$ such that:
$$
\left| \lambda_1(\wedge^k \mu') - \lambda_1(\wedge^k \mu) \right| \leq C_k' d_{\mathrm W}(\wedge^k \mu', \wedge^k \mu).
$$

By Lemma~\ref{lemma:trivial}, $d_{\mathrm W}(\wedge^k \mu', \wedge^k \mu) \leq k L_K^{(k-1)/2} d_{\mathrm W}(\mu', \mu)$. Combining these inequalities yields the uniform Lipschitz bound for all $k \in \{1, \dots, d\}$.
\end{proof}

% ----------------------------------------------------------------------
\section{Algebraic structure of the semigroup}\label{sec:alg}

Proposition~\ref{prop:conformal} handles the case when $\mu$ is conformal. In the non-conformal case, the key idea is that the semisimplicity assumption and the one-point Lyapunov spectrum together force a structured decomposition of $\mathbb{R}^d$ into conformal blocks that are permuted by $\Gamma_\mu$, reducing the problem to a certain convergence rate towards these components of the decomposition. Throughout the rest of the paper, we adopt the convention of using subscripts to denote sequence indices (time steps $n$) and radii of neighborhoods ($r$ and $R$), and superscripts to denote powers, coordinates, or subspace indices ($j \in J$).

Let $V$ be a finite-dimensional real vector space and $\Gamma$ a subsemigroup of $\mathrm{GL}(V)$. A $\Gamma$-invariant subspace $W$ is called $\Gamma$-\textit{irreducible} if it contains no proper non-trivial $\Gamma$-invariant subspace. A subspace $W$ is called \textit{virtually} $\Gamma$-invariant if it has a finite $\Gamma$-orbit $\{ g W : g \in \Gamma \}$. A virtually $\Gamma$-invariant subspace is called \textit{strongly} $\Gamma$-\textit{irreducible} if it contains no proper non-trivial virtually $\Gamma$-invariant subspace. We omit the reference to $\Gamma$ if the context is clear.

\subsection{Virtually conformal decompositions}

\begin{lemma}\label{lemma:decomp}
Let $V$ be a finite-dimensional real vector space and $\Gamma$ be a subsemigroup of $\mathrm{GL}(V)$. Assume the action of $\Gamma$ on $V$ is semisimple. Then there exists a decomposition $V=\oplus _{j \in J}W^{j}$ such that each $W^{j}$ is a virtually invariant and strongly irreducible subspace. If further, the action of $\Gamma$ on $V$ is irreducible, then $\Gamma$ permutes the set $\left\{ W^{j} \right\}_{j\in J}$ transitively.
\end{lemma}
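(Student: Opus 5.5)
The plan is to first reduce to the irreducible case, then build the decomposition from one \emph{finite orbit} $\mathcal O$ of a minimal virtually invariant subspace, and show that this orbit is itself the decomposition $\{W^j\}_{j\in J}$. The step I expect to fail in general is showing that the members of $\mathcal O$ form a direct sum (third paragraph): I believe the literal transitivity claim is false without an extra hypothesis.

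\emph{Reduction to the irreducible case.} By semisimplicity, $V=\oplus_i V_i$ with each $V_i$ a $\Gamma$-invariant, $\Gamma$-irreducible subspace. A subspace of $V_i$ that is virtually invariant and strongly irreducible for $\Gamma|_{V_i}$ is also virtually invariant and strongly irreducible in $V$. So the decomposition of $V$ is the concatenation of decompositions of the $V_i$, and from now on I assume the action is irreducible.

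\emph{Building the orbit.} Choose a virtually invariant $W\neq 0$ of minimal dimension; such a $W$ exists because $V$ itself is virtually invariant. By minimality, $W$ is strongly irreducible. Let $\mathcal O=\{W\}\cup\{gW:g\in\Gamma\}$, a finite set of subspaces of equal dimension.
\begin{enumerate}[label=(\alph*)]
\item Each $g\in\Gamma$ maps $\mathcal O$ into itself, and injectively since $g$ is invertible. So $g$ induces a permutation of the finite set $\mathcal O$.
\item The image of $\Gamma$ in $\mathrm{Sym}(\mathcal O)$ is a subsemigroup of a finite group, hence a subgroup. Since every member of $\mathcal O$ is reachable from $W$, this group acts transitively on $\mathcal O$; in particular every member of $\mathcal O$ is strongly irreducible.
\item The sum $\sum_{W'\in\mathcal O}W'$ is nonzero and $\Gamma$-invariant, so by irreducibility it equals $V$.
\end{enumerate}

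\emph{The direct-sum step.} For $W'\in\mathcal O$, set $U_{W'}=W'\cap\sum_{W''\in\mathcal O\setminus\{W'\}}W''$. Because $g$ permutes $\mathcal O$, we have $gU_{W'}=U_{gW'}$. Hence $U_{W'}$ is virtually invariant, and strong irreducibility forces $U_{W'}\in\{0,W'\}$. By transitivity the same alternative holds for every $W'$. If $U_{W'}=0$ for all $W'$, then $V=\oplus_{W'\in\mathcal O}W'$, and taking $\{W^j\}_{j\in J}=\mathcal O$ gives both claims of the lemma, with transitivity from (b).

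The obstacle is the other alternative, where each $W'$ lies in the sum of the others. Choosing a maximal independent subfamily still yields a decomposition into virtually invariant, strongly irreducible subspaces, which proves the first assertion. However, such a subfamily need not be $\Gamma$-stable.

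\emph{This seems unavoidable.} Take $\Gamma$ generated by rotation by $2\pi/3$ on $\mathbb R^2$. The action is irreducible and semisimple. The three lines forming an orbit are virtually invariant and strongly irreducible, while $\mathbb R^2$ itself is not strongly irreducible, since it contains these lines. Yet no pair of lines is permuted by $\Gamma$. So in this case I would only be able to prove transitivity on $\mathcal O$, and the statement as worded would need an additional hypothesis guaranteeing that $\mathcal O$ is linearly independent.
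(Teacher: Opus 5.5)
Your diagnosis is correct. The defect is in the paper, not in your argument. The paper takes the same route you do: a minimal virtually invariant $W$, its finite orbit, then an invariant complement by semisimplicity and induction. At the decisive point, however, it only asserts ``Minimality means that every $W^j$ is strongly irreducible. It follows that their sum is direct and their sum subspace is irreducible.''

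Your example refutes the directness. Take $\Gamma=\{\rho,\rho^2,\mathrm{Id}\}$ with $\rho$ the rotation by $2\pi/3$. Then $\rho$ acts on $\mathbb{P}(\mathbb{R}^2)$ as a fixed-point-free bijection of order $3$, so every line is strongly irreducible and has an orbit of exactly three lines. Every decomposition as in the first assertion consists of two lines, since $\mathbb{R}^2$ is not strongly irreducible. No two-element set is invariant under such a bijection, so the transitivity assertion is false as stated, not merely unproved.

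The claimed irreducibility of the orbit sum also fails in the reducible case, e.g.\ for the coordinate swap on $\mathbb{R}^2$ starting from $W=\mathbb{R}e_1$. That failure does not affect the first assertion.

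The error matters downstream. Lemma~\ref{lemma:admits} uses the permutation property $gW^j=W^{\pi(j)}$ of this decomposition, and in your example the only virtually conformal decomposition is the trivial one. In the irreducible case one can obtain a direct decomposition into virtually invariant summands permuted transitively by $\Gamma$. An example is the isotypic components for the identity component of the Zariski closure of $\Gamma$ (Clifford theory). These summands need not be strongly irreducible, though: in your example there is a single component, $\mathbb{R}^2$.

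For the first assertion, one step of yours needs a justification: that a maximal independent subfamily of $\mathcal O$ spans $V$. Here is how to fill it.
\begin{enumerate}
\item Let $S$ be the sum of the subfamily and $W'\in\mathcal O$ a member outside it. Maximality gives $W'\cap S\neq 0$.
\item For invertible $g$ we have $g(U_1+U_2)=gU_1+gU_2$ and $g(U_1\cap U_2)=gU_1\cap gU_2$. Hence finite sums and intersections of virtually invariant subspaces are virtually invariant, and in particular $W'\cap S$ is virtually invariant.
\item Strong irreducibility of $W'$ then forces $W'\subset S$, so $S\supseteq\sum_{W'\in\mathcal O}W'=V$.
\end{enumerate}
With this added, your reduction to irreducible pieces and the orbit argument prove the first assertion completely. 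They prove the second whenever the chosen orbit $\mathcal O$ is independent.
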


\begin{proof}
$V_1=V$ is virtually invariant. Since dimension decreases with proper subspaces, a minimal non-trivial virtually invariant subspace $W$ exists. Consider the finite $\Gamma$-orbit $\left\{ W^{j} \right\}_{j \in J^1}$ of $W$. The sum $\sum_{j\in J^1}W^{j}$ is invariant. Minimality means that every $W^{j}$ is strongly irreducible. It follows that their sum is direct and their sum subspace is irreducible. Let $V_2$ be a complementary invariant subspace of $\oplus_{j\in J^1} W^j$ by semisimplicity and we obtain another irreducible invariant subspace $\oplus_{j\in J^2}W^j \subset V_2$ in the same way. This induction ends when we have $V=\oplus_{j\in J}W^j$, where $J=\cup_{i\in I} J^i$. Transitivity follows from the definition of an orbit when the action is irreducible.
\end{proof}

We denote by $S_J$ the permutation group of a finite set $J$ that indexes a $\Gamma$-invariant family of subspaces, and by $F$ the image of $\Gamma$ in $S_J$ under the map $g \mapsto \pi$ with $W^{\pi(j)}=gW^j$ for every $j \in J$.

\begin{defn} \label{def:decomp}
Let $V$ be a finite-dimensional real vector space and $\Gamma$ be a subsemigroup of $\mathrm{GL}(V)$. A decomposition $V=\oplus _{j \in J} W^{j}$ into virtually $\Gamma$-invariant subspaces is called \emph{virtually $\Gamma$-conformal} if there exist conformal structures $[\mathcal{C}^{j}]$ on $W^{j}$ for every $j\in J$ such that: for any $g \in \Gamma$, there exists a permutation $\pi\in S_J$, such that $gW^{j} = W^{\pi(j)}$ and $g_{*}[\mathcal{C}^{j}]=[\mathcal{C}^{\pi(j)}]$, for any $j \in J$.

In this case, $g$ preserves the decomposition up to permutation and is conformal on each summand. One can choose a basis of $\mathbb{R}^{d}$ so that every $g \in \Gamma$ is a product $g=qa$ of a block permutation matrix $q$ and a block conformal matrix $a$:
$$
g =
\left(
\begin{array}{ccccccccc}
&  & & & & q^{j'} & & & \\
&  & \dots & & & & & & \\
 & & & & & & & & \\
 & & & & & & & & \\
q^{1}& & & & \dots & & & & \\
& & & & & & & & \\
 & & & & & & & & q^{\left| J \right| } \\
& & & & & & \dots & & \\
& & & q^j & & & &  &
\end{array}
\right)
 \left(
\begin{array}{ccccccccc}
a^{1} & & & & & & & & \\
& \ddots & & & & & & & \\
& & a^{j} & & & & & & \\
 & & & & & & & & \\
& & & & \ddots & & & & \\
& & & & & & & & \\
 & & & & & & a^{j'} & & \\
& & & & & & & \ddots & \\
& & & & & & & & a^{\left| J \right|}
\end{array}
\right),
$$
where every $a^{j}$ is a scalar matrix and every $q^{j}$ is an orthogonal matrix. We also denote the scalar by $a^{j}>0$. Note that this basis can be adjusted by a block-conformal change of coordinates $T = \oplus_{j\in J} t^j \mathrm{Id}_{W^j}$ with $t^j>0$ for all $j\in J$.

The virtually $\Gamma$-conformal decompositions form a partially ordered set with the following partial order: $\oplus_{i \in I} U^i$ is \emph{coarser} (smaller) than $\oplus_{j \in J} W^j$ if (i) for any $j\in J$ there exists $i \in I$ such that $U^i \supset W^j$, and (ii) there exist conformal structures $[\mathcal{D}^i]$ on $U^i$ and $[\mathcal{C}^j]$ on $W^j$, such that $\mathcal{D}^i |_{W^j} = \mathcal{C}^j$ (as norms) for all $i\in I$ and $j \in J$ with $U^i \supset W^j$.
\end{defn}

\begin{lemma} \label{lemma:coarsest}
Let $V$ be a finite-dimensional real vector space and $\Gamma$ be a subsemigroup of $\mathrm{GL}(V)$. There exists a coarsest virtually $\Gamma$-conformal decomposition, as long as $V$ admits one virtually $\Gamma$-conformal decomposition.
\end{lemma}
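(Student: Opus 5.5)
The plan is a finiteness plus merging argument. A coarsest decomposition should be the unique minimal element of the poset of virtually $\Gamma$-conformal decompositions. I would show that any two such decompositions admit a common coarsening, and then take a minimal element, which exists because the number of summands is bounded by $\dim V$. Concretely, I would proceed in the following steps.

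First, I would check that coarsening strictly decreases the number of summands, unless it is an equality of decompositions. Each $U^i$ is a direct sum of the $W^j$ it contains, since the $W^j$ span $V$ and each lies in some $U^i$. Hence $|I|\le|J|$, with equality only when the two decompositions coincide, and then (ii) forces the conformal structures to agree. So any chain of strict coarsenings has length at most $\dim V$, and minimal elements exist below any given decomposition.

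Second, I would construct a common coarsening of two decompositions $\oplus_j W^j$ and $\oplus_k W'^k$. The natural candidate groups summands along a \emph{connectedness} relation. I would declare $j$ and $k$ linked when $W^j\cap W'^k\neq 0$, or more robustly when the projection of $W^j$ to $W'^k$ is nonzero. I would then let the $U^i$ be the sums over the connected components of the bipartite graph thus obtained. These $U^i$ are virtually invariant, because $\Gamma$ permutes both families and therefore permutes the components. I expect the cleanest route to avoid a direct check that the $U^i$ form a direct sum. Instead, I would use the normal form $g=qa$ to note the following: a $\Gamma$-invariant family of conformal structures $\mathcal{C}^j$ extends to the inner product $\langle\cdot,\cdot\rangle=\oplus_j t^j\mathcal{C}^j$. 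For any $g\in\Gamma$, this inner product is carried to one of the same shape, with weights multiplied by the scalars $a^j$.

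Third, and this is the main obstacle, I must put a conformal structure $\mathcal{D}^i$ on $U^i$ restricting to both $\mathcal{C}^j$ and $\mathcal{C}'^k$ and equivariant under $\Gamma$. The issue is that the scalars $a^j$ for different $j$ inside one component must coincide, so that $g$ is conformal on $U^i$. I would try the following argument first. If $W^j$ and $W'^k$ overlap nontrivially, then on the intersection $g$ acts conformally with respect to both structures, so the scalars agree there, up to the fixed normalisations $t^j$. Chasing along the edges of a component, I can choose the rescalings $t^j$ so that all scalars of $g$ agree across the component, simultaneously for all $g$, because the ratios are $g$-independent. Then I would take $\mathcal{D}^i=\oplus_{j\in\text{comp}} t^j\mathcal{C}^j$, and check that its restriction to each $W'^k$ is conformal to $\mathcal{C}'^k$. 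This uses that both are preserved up to scalar by the linear maps $g|_{W'^k}$, together with the strong irreducibility of the summands (Lemma~\ref{lemma:decomp}). Strong irreducibility makes the invariant conformal structure on a summand unique, via the standard Schur-type argument applied to the finite-index stabiliser.

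Finally, given two minimal decompositions, their common coarsening is below both, so by minimality it equals each of them. Hence the minimal element is unique, and it is the coarsest virtually $\Gamma$-conformal decomposition.
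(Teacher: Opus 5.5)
Your first step is exactly the paper's proof, and it already proves the lemma as the paper intends it. There, ``coarsest'' means \emph{minimal}: no strictly coarser virtually $\Gamma$-conformal decomposition exists. This follows because a strict coarsening strictly lowers the number of summands, which lies between $1$ and $\dim V$. Minimality is also all the paper uses later: it always speaks of \emph{a} coarsest decomposition, and Lemma~\ref{lemma:variance} argues by contradiction with a strictly coarser one. Your remaining steps aim at a stronger claim, uniqueness of the minimal element, which the paper neither states nor needs. As written, they do not hold together.

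There are three concrete problems. First, linking by nonzero intersection does not produce a decomposition: $\mathbb{R}e_1\oplus\mathbb{R}e_2$ and $\mathbb{R}(e_1+e_2)\oplus\mathbb{R}(e_1-e_2)$ have no nonzero intersections. With the projection relation $P_kQ_j\neq 0$, directness is a short dimension count; the normal form $g=qa$ does not replace it.

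Second, your claim that the ratios of scalars are $g$-independent is false. Rescaling by $t^j$ replaces $a^j(g)$ by $a^j(g)\,t^{\pi(j)}/t^j$, which depends on $g$ through $\pi$. Moreover, for blocks linked only through a projection, or through an intersection on which $\mathcal{C}^{j}$ and $\mathcal{C}'^{k}$ are not proportional, nothing fixes the ratio of $a^j(g)$ and $a'^{k}(g)$ for an individual $g$. What you can prove is weaker. For $h$ acting trivially on both index sets, $P_kQ_j|_{W^j}$ intertwines $h|_{W^j}$ and $h|_{W'^{k}}$, which forces equal scalars. Getting an equivariant family on the merged blocks then needs a separate compactness/averaging argument, or the Zariski-closure argument of Lemma~\ref{lemma:admits}.

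Third, your final check appeals to strong irreducibility of the summands via Lemma~\ref{lemma:decomp}. But the summands of an arbitrary virtually $\Gamma$-conformal decomposition need not be strongly irreducible; the minimal ones typically are not. That lemma also assumes semisimplicity and produces only one particular decomposition. With fixed structures the check can actually fail. Take trivial $\Gamma$ on $\mathbb{R}^2$, and compare $\mathbb{R}e_1\oplus\mathbb{R}e_2$ with $\mathbb{R}^2$ carrying an inner product in which $e_1\not\perp e_2$. In the paper's definition, however, the structures in (ii) are existentially quantified, so this check is unnecessary. Once the merged decomposition is virtually $\Gamma$-conformal with structures $\mathcal{D}^i$, taking $\mathcal{C}^j:=\mathcal{D}^i|_{W^j}$ gives (ii) automatically.
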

\begin{proof}
Since the number of direct summands decreases with strictly coarser decompositions, a minimal virtually $\Gamma$-conformal decomposition strictly exists.
\end{proof}

\begin{lemma} \label{lemma:uniq_conf}
Let $V$ be a finite-dimensional real vector space and $\Gamma$ be a subsemigroup of $\mathrm{GL}(V)$. Assume the action of $\Gamma$ on $V$ is irreducible and preserves a conformal structure. Then this conformal structure is the unique $\Gamma$-invariant one.
\end{lemma}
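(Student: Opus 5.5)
Let $\langle\cdot,\cdot\rangle_1$ be an inner product representing the given $\Gamma$-invariant conformal structure, and suppose $\langle\cdot,\cdot\rangle_2$ represents another $\Gamma$-invariant one. The plan is to compare the two via a self-adjoint operator and use irreducibility to force it to be scalar, in the spirit of Schur's lemma. Write $\langle u,v\rangle_2=\langle Au,v\rangle_1$, where $A$ is positive definite and self-adjoint for $\langle\cdot,\cdot\rangle_1$. Invariance of the two conformal structures means that for every $g\in\Gamma$ there are scalars $c_1(g),c_2(g)>0$ with $g^{*}g=c_1(g)\,\mathrm{Id}$ (adjoint with respect to $\langle\cdot,\cdot\rangle_1$) and $g^{*}Ag=c_2(g)A$.

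\textbf{Step 1.} Derive the commutation relation. Since $g^*=c_1(g)g^{-1}$, the second identity gives $c_1(g)\,g^{-1}Ag=c_2(g)A$, that is, $Ag=\kappa(g)\,gA$ with $\kappa(g)=c_2(g)/c_1(g)>0$.

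\textbf{Step 2.} Show $\kappa(g)=1$. Conjugating by $g$ gives $g^{-1}Ag=\kappa(g)A$, so $A$ and $\kappa(g)A$ have the same spectrum. Because the spectrum is a finite set of positive reals, its maximum is multiplied by $\kappa(g)$ and must stay unchanged. Hence $\kappa(g)=1$, and $A$ commutes with every $g\in\Gamma$.

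\textbf{Step 3.} Apply irreducibility. Let $\lambda$ be any eigenvalue of $A$. It is real because $A$ is self-adjoint. Since $A$ commutes with $\Gamma$, the eigenspace $\ker(A-\lambda)$ is a nonzero $\Gamma$-invariant subspace. By irreducibility it is all of $V$, so $A=\lambda\,\mathrm{Id}$ and the two inner products are proportional. This is exactly equality of conformal structures.

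The main subtlety is Step 2. The complex version of Schur's lemma is not directly available over $\mathbb{R}$, and the multipliers $\kappa(g)$ could a priori be nontrivial, so $A$ need not obviously commute with $\Gamma$. The spectral scaling argument handles this. Real self-adjointness then guarantees a real eigenvalue, which replaces algebraic closure. Note that only $g\in\Gamma$ is used, with no inverses, so the argument works for semigroups.
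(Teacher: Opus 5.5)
Your proof is correct and is essentially the paper's argument: it sets $A=Q_1^{-1}Q_2$, derives $h^{-1}Ah=\frac{\eta_2(h)}{\eta_1(h)}A$, and uses that a finite set of positive eigenvalues cannot be preserved by a nontrivial scaling to get $A$ commuting with $\Gamma$. It then applies irreducibility to an eigenspace of $A$, exactly as you do; your version with an operator self-adjoint for $\langle\cdot,\cdot\rangle_1$ is the same argument written without coordinates.
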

\begin{proof}
We may represent the two conformal structures by positive-definite symmetric bilinear forms $Q_{1}$ and $Q_{2}$. For any $h \in \Gamma$, conformal invariance implies there exist scalars $\eta_{1}(h)>0$ and $\eta_{2}(h)>0$ such that
$$
h^{T}Q_{1}h=\eta_{1}(h)Q_{1} \quad \text{and} \quad h^{T}Q_{2}h=\eta_{2}(h)Q_{2}.
$$
Notice that $\eta_{1}$ and $\eta_{2}$ are semigroup homomorphisms from $\Gamma$ to $\Rpos$.

Consider the isomorphism $A=Q_{1}^{-1}Q_{2}$. Because $Q_{1}$ and $Q_{2}$ are positive-definite and symmetric, the eigenvalues of $A$ are all strictly positive real numbers. We compute the conjugation of $A$ by $h$:
$$
h^{-1} A h = h^{-1} Q_{1}^{-1} (h^{T})^{-1} h^{T} Q_{2} h = (h^{T}Q_{1}h)^{-1}(h^{T}Q_{2}h).
$$
Substituting the conformal relations yields:
$$
h^{-1} A h = (\eta_{1}(h)Q_{1})^{-1}(\eta_{2}(h)Q_{2}) = \frac{\eta_{2}(h)}{\eta_{1}(h)} Q_{1}^{-1}Q_{2} = \frac{\eta_{2}(h)}{\eta_{1}(h)} A.
$$
This equation implies that if $v$ is an eigenvector of $A$ with eigenvalue $\beta$, then $A(hv) = h(h^{-1}Ah)v = \frac{\eta_{2}(h)}{\eta_{1}(h)} \beta (hv)$. Thus, $\Gamma$ permutes the eigenspaces of $A$, and acts on the set of eigenvalues by multiplying them by the factor $\frac{\eta_{2}(h)}{\eta_{1}(h)}$.

Since the set of eigenvalues of $A$ is a finite subset of $\Rpos$, the only way a uniform positive scaling can preserve this set is if the scaling factor is exactly $1$. Therefore, for any $h \in \Gamma$, we must have $\eta_{1}(h)=\eta_{2}(h)$.

This implies that $h^{-1} A h = A$, meaning $Ah = hA$ for all $h \in \Gamma$. Because $A$ commutes with $\Gamma$, every eigenspace of $A$ is a $\Gamma$-invariant subspace. Since $A$ has at least one real eigenvalue, its corresponding eigenspace is non-trivial. By the assumption that the action of $\Gamma$ is irreducible, this eigenspace must be the entire space. Consequently, $A = \beta\,\mathrm{Id}$ for some $\beta>0$. This gives $Q_{2} = \beta Q_{1}$, proving that the two conformal structures are identical: $[Q_{1}]=[Q_{2}]$.
\end{proof}

Denote the return subsemigroup of $\Gamma$ to a subspace $W$ by $\Gamma_W = \{ g \in \Gamma: gW=W \}$. The following lemma ensures the existence of one virtually $\Gamma$-conformal decomposition.

\begin{lemma}\label{lemma:admits}
Let $V$ be a finite-dimensional real vector space and $\Gamma$ be a subsemigroup of $\mathrm{GL}(V)$. Assume the action of $\Gamma$ on $V$ is semisimple and there exists a $\Gamma_{W}$-invariant conformal structure on $W$ for any strongly irreducible virtually invariant subspace $W$. Then $V$ admits a virtually $\Gamma$-conformal decomposition.
\end{lemma}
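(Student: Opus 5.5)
By Lemma~\ref{lemma:decomp}, semisimplicity gives a decomposition $V=\oplus_{j\in J}W^j$ into strongly irreducible, virtually invariant subspaces. It also splits $J=\bigsqcup_{i\in I}J^i$ so that each block $V^i=\oplus_{j\in J^i}W^j$ is $\Gamma$-invariant and irreducible, and $\Gamma$ permutes $\{W^j\}_{j\in J^i}$ transitively. Since the summands of different $V^i$ are never mixed by $\Gamma$, we may choose conformal structures block by block. So we fix one $i$, write $J=J^i$, and construct structures $[\mathcal{C}^j]$ on the $W^j$, $j\in J$, that are compatible under $\Gamma$.

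\textbf{Step 1: choose the structures.} Fix a base index $j_0\in J$. By hypothesis there is a $\Gamma_{W^{j_0}}$-invariant conformal structure $[\mathcal{C}^{j_0}]$ on $W^{j_0}$. By transitivity, for each $j\in J$ pick $h_j\in\Gamma$ with $h_jW^{j_0}=W^j$, and set $[\mathcal{C}^j]=(h_j)_*[\mathcal{C}^{j_0}]$. We must show this choice is independent of $h_j$, and that every $g\in\Gamma$ maps $[\mathcal{C}^j]$ to $[\mathcal{C}^{\pi(j)}]$.

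\textbf{Step 2: the semigroup obstacle.} In a group, $h_{j}^{-1}h'_{j}$ would lie in $\Gamma_{W^{j_0}}$ and the argument would be immediate. In a semigroup, inverses are not available, and this is the main obstacle. The plan is to close up each $W^j$ back to $W^{j_0}$ using a return element. The permutation image $F\subset S_J$ is a finite semigroup of permutations, hence a group. So for each $j$ there is $k_j\in\Gamma$ with $k_jW^j=W^{j_0}$, and some power of any element of $\Gamma$ acts trivially on $J$.

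Given two choices $h,h'$ with $hW^{j_0}=h'W^{j_0}=W^j$, both $k_jh$ and $k_jh'$ lie in $\Gamma_{W^{j_0}}$, so both preserve $[\mathcal{C}^{j_0}]$. Hence $(k_j)_*h_*[\mathcal{C}^{j_0}]=(k_j)_*h'_*[\mathcal{C}^{j_0}]$. Since $k_j$ is invertible as a linear map $W^j\to W^{j_0}$, pushforward by it is injective on conformal structures, and therefore $h_*[\mathcal{C}^{j_0}]=h'_*[\mathcal{C}^{j_0}]$. The structure $[\mathcal{C}^j]$ is thus well defined.

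\textbf{Step 3: compatibility.} For $g\in\Gamma$ with $gW^j=W^{\pi(j)}$, the element $gh_j$ maps $W^{j_0}$ to $W^{\pi(j)}$. By the independence just shown,
\[
(gh_j)_*[\mathcal{C}^{j_0}]=[\mathcal{C}^{\pi(j)}],
\]
that is, $g_*[\mathcal{C}^j]=[\mathcal{C}^{\pi(j)}]$, which is exactly the condition in Definition~\ref{def:decomp}.

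Collecting the blocks over $i\in I$ gives a virtually $\Gamma$-conformal decomposition of $V$. If one wants the structure on each base subspace canonical, Lemma~\ref{lemma:uniq_conf} can be applied when $\Gamma_{W^{j_0}}$ acts irreducibly on $W^{j_0}$, but the argument does not need this.
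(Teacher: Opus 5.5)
Your proof is correct, and it takes a different route from the paper's.

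\textbf{The paper's route.} It starts from the hypothesized $\Gamma_{W^j}$-invariant structure on \emph{every} summand and then has to show $g_*[\mathcal{C}^j]=[\mathcal{C}^{\pi(j)}]$. To get around the missing inverses, it passes to the Zariski closure $H$ of $\Gamma$, which is a group. It argues that $g_*[\mathcal{C}^j]$ is invariant under $gH_{W^j}g^{-1}=H_{W^{\pi(j)}}\supset\Gamma_{W^{\pi(j)}}$, and concludes with the uniqueness statement of Lemma~\ref{lemma:uniq_conf}.

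\textbf{Your route.} You transport a single structure from a base summand. You prove independence of the transporting element using a return element $k_j\in\Gamma$, which exists because the permutation image of $\Gamma$ is a finite semigroup and hence a group. Injectivity of $(k_j)_*$ on conformal structures then finishes the argument.

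\textbf{Comparison.} Your route is more elementary. It uses the hypothesis on only one summand per orbit, and it needs neither algebraic groups nor Lemma~\ref{lemma:uniq_conf}. It also avoids two points the paper leaves implicit:
\begin{enumerate}
\item that $\Gamma_W$-invariance of a conformal structure implies $H_W$-invariance, which requires $\Gamma_W$ to be Zariski dense in $H_W$;
\item that $\Gamma_{W^{\pi(j)}}$ acts irreducibly on $W^{\pi(j)}$, as Lemma~\ref{lemma:uniq_conf} requires.
\end{enumerate}

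The paper's route gives one extra piece of information: the structures produced coincide with the prescribed invariant ones on each summand. You can recover this as well. Your Step~3 with $\pi(j)=j$ shows that each $[\mathcal{C}^j]$ is $\Gamma_{W^j}$-invariant. Your return-element trick then shows that $\Gamma_W$ acts irreducibly on any strongly irreducible $W$. Indeed, if $U\subset W$ is $\Gamma_W$-invariant and $gW=g'W$, pick $k\in\Gamma$ with $kgW=W$. Then $kg,kg'\in\Gamma_W$, so $kgU=U=kg'U$ and hence $gU=g'U$. So $U$ has a finite orbit, is virtually invariant, and must equal $\{0\}$ or $W$. With irreducibility in hand, Lemma~\ref{lemma:uniq_conf} applies, and your structures agree with the given ones.
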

\begin{proof}
Let $\oplus_{j\in J}W^j$ be the decomposition given by Lemma \ref{lemma:decomp}. By the assumption, there exist conformal structures $[\mathcal{C}^j]$ on $W^j$ for all $j\in J$, such that $g_*[\mathcal{C}^j]=[\mathcal{C}^j]$, for any $g \in \Gamma_{W^j}$. 

Now let $g \in \Gamma, j \in J$ and $j' = \pi(j)$. Consider the pushforward conformal structure $g_*[\mathcal{C}^j]$ on $W^{j'}$. It is preserved by $g \Gamma_{W^j} g^{-1}$. However, $g^{-1}$ may not live in $\Gamma$, so we may not have $g \Gamma_{W^j} g^{-1} \supset \Gamma_{W^{j'}}$.
Consider the Zariski closure $H$ of $\Gamma$, which is a group (see for example \cite{BeQ16}). Denote similarly the return subgroup of $H$ by $H_W=\{ h \in H : hW=W \}$ for any subspace $W$.
A conformal structure on $W$ is preserved by $\Gamma_W$ if and only if it is preserved by $H_W$. We know that $g_*[\mathcal{C}^j]$ is preserved by $g H_{W^j} g^{-1} = H_{W^{j'}}$, hence it is also preserved by $\Gamma_{W^{j'}}$. 
Lemma \ref{lemma:uniq_conf} ensures the uniqueness of the invariant conformal structure, yielding $g_*[\mathcal{C}^j] = [\mathcal{C}^{j'}]$. We therefore have a virtually $\Gamma$-conformal decomposition $\oplus_{j\in J}W^j$.
\end{proof}

\subsection{Application of Furstenberg's criterion}
The semigroup $\Gamma_\mu$ satisfies the structural assumptions due to Furstenberg's criterion (Theorem \ref{lemma:criterion}) and the behavior of Lyapunov exponents under induced transformations.

\begin{lemma}[Return Lyapunov exponents] \label{lemma:return}
Let $\mu$ be a probability measure on $G$ with a finite first moment. Let $W$ be a virtually $\Gamma_\mu$-invariant subspace and $\mu_W$ be the pushforward of $\mu^{\mathbb{N}^*}$ by the return map 
\begin{equation}
(g_1, \ldots, g_n, \ldots) \mapsto g_{\tau} \cdots g_2 g_1,
\end{equation}
where $\tau \geq 1$ is the minimal time such that $g_\tau \cdots g_2 g_1$ preserves $W$. Then $\mu_W$ has a finite first moment and 
\begin{equation} \label{eq:le}
\lambda_1 (\mu_W) = \#\{ g W : g \in \Gamma_\mu \} \cdot \lambda_1(\mu). 
\end{equation}
\end{lemma}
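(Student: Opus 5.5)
The plan is to read the return map as an induced (first-return) system and combine Kac's lemma with the subadditive ergodic theorem. Let $\mathcal O=\{gW: g\in\Gamma_\mu\}$, which is finite by assumption, and set $m=\#\mathcal O$. Note that $W$ itself may not lie in $\mathcal O$ a priori; but since $\Gamma_\mu$ acts on the finite set $\mathcal O\cup\{W\}$, the subspace $W$ is eventually revisited. I will then restrict to the recurrent class.

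\textbf{Step 1: the orbit chain.} Consider the Markov chain $W_n=g_n\cdots g_1W$ on the finite set $\mathcal O\cup\{W\}$. Because $\mathcal O$ is a single $\Gamma_\mu$-orbit, every element of $\mathcal O$ is reachable from every other, so $\mathcal O$ is one irreducible recurrent class. I expect $W\in\mathcal O$ to be needed implicitly; this holds automatically in the setting where the lemma is used (the $W^j$ of Lemma~\ref{lemma:decomp} satisfy $W^j=gW^{j}$ for some $g$, by transitivity). In the proof I will assume $W\in\mathcal O$, noting that this is forced for $\tau$ to be a.s.\ finite with finite mean.

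\textbf{Step 2: stationarity and Kac.} The stationary distribution of the chain on $\mathcal O$ is uniform. To see this, view the chain as a random walk of the semigroup image $F\subset S_{\mathcal O}$ on a transitive set; the finite closure of $F$ is a group, each step is a permutation, so the transition matrix is doubly stochastic and the uniform measure is stationary. It is unique by irreducibility. Kac's lemma then gives $\mathbb E\tau=m$. Moreover, the tail $\mathbb P(\tau>n)$ decays exponentially, since the chain is finite and irreducible. Combined with $\log N(g_\tau\cdots g_1)\le\sum_{i\le\tau}\log N(g_i)$ and Wald's identity, this gives $\int\log N\,d\mu_W\le \mathbb E\tau\cdot\int\log N\,d\mu<\infty$. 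This settles the finite first moment.

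\textbf{Step 3: exponent identity.} Define the successive return times $\tau_1=\tau$ and $\tau_{k+1}=\tau_k+\tau\circ\sigma^{\tau_k}$. By the strong Markov property, the blocks $h_k=g_{\tau_k}\cdots g_{\tau_{k-1}+1}$ are i.i.d.\ with law $\mu_W$, and $h_k\cdots h_1=g_{\tau_k}\cdots g_1$. Hence, almost surely,
$$\lambda_1(\mu_W)=\lim_k\frac1k\log\|g_{\tau_k}\cdots g_1\|=\lim_k\frac{\tau_k}{k}\cdot\frac{1}{\tau_k}\log\|g_{\tau_k}\cdots g_1\|=m\,\lambda_1(\mu).$$
Here I use the strong law, $\tau_k/k\to\mathbb E\tau=m$, and the fact that the Furstenberg--Kesten limit along the full sequence passes to the subsequence $\tau_k\to\infty$.

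\textbf{Main obstacle.} The delicate point is Step 2, namely justifying that the orbit chain has uniform stationary law so that $\mathbb E\tau=m$ exactly. The doubly-stochastic argument requires each $g$ to act on $\mathcal O$ as a bijection. This holds because $g$ is invertible, so it maps distinct subspaces to distinct subspaces, and it maps $\mathcal O$ into $\mathcal O$; an injective self-map of a finite set is a bijection.
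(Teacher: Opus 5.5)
Your proposal is correct and is essentially the paper's argument: the paper encodes your orbit chain as the skew product $T(b,U)=(Sb,g_1U)$ on $B\times\mathcal{O}$ preserving $\mathbb{P}\times\mathbf{u}$ and applies Kac's formula and Birkhoff's theorem to the induced map, where you use the doubly stochastic orbit chain, Kac's lemma, Wald's identity and the strong Markov property with the SLLN; both then conclude via Furstenberg--Kesten along the return times $\tau_k$. You need not assume $W\in\mathcal{O}$: your own observation that each $g\in\Gamma_\mu$ permutes $\mathcal{O}$ gives $gW\in\mathcal{O}=g\mathcal{O}$, so $gW=gU$ for some $U\in\mathcal{O}$, and injectivity of $g$ then gives $W=U\in\mathcal{O}$.
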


\begin{proof}
Let $B = G^{\mathbb{N}^*}$ be the space of trajectories $b = (g_1, g_2, \dots)$ equipped with the product measure $\mathbb{P} = \mu^{\otimes \mathbb{N}^*}$ and the left shift operator $S$. The semigroup $\Gamma_\mu$ acts transitively on the finite orbit of virtually invariant subspaces $\mathcal{O} = \{gW : g \in \Gamma_\mu\}$. Let $M = \#\mathcal{O}$.

Consider the skew-product dynamical system $Y = B \times \mathcal{O}$ with the transformation $T(b, U) = (S b, g_1 U)$. Because the action of $\Gamma_\mu$ on $\mathcal{O}$ is transitive, $T$ preserves the measure $\mathbb{P} \times \mathbf{u}$, where $\mathbf{u}$ is the uniform probability measure on $\mathcal{O}$.

Let $A = B \times \{W\} \subset Y$. The measure of this set is $(\mathbb{P} \times \mathbf{u})(A) = 1/M$. The first return time of a point $(b, W) \in A$ to $A$ under $T$ is exactly $\tau(b)$, the minimal time such that $g_{\tau(b)} \dots g_1 W = W$. 
Let $T_A$ be the induced transformation on $A$, which preserves the conditional probability measure $\mathbb{P}_A = (\mathbb{P} \times \mathbf{u})(\cdot \mid A)$.

To prove $\mu_W$ has a finite first moment, define the function $\varphi: Y \to \mathbb{R}$ by $\varphi(b, U) = \log N(g_1)$. Since $\mu$ has a finite first moment, $\varphi \in L^1(Y, \mathbb{P} \times \mathbf{u})$. By the submultiplicativity of $N$, the return blocks $h = g_{\tau(b)} \dots g_1$ satisfy
$$
\log N(h) \leq \sum_{l=0}^{\tau(b)-1} \log N(g_{l+1}) = \sum_{l=0}^{\tau(b)-1} \varphi(T^l(b, W)).
$$
By Birkhoff's Ergodic Theorem (specifically Kac's integral formula for induced transformations), the induced sum $\sum_{l=0}^{\tau(b)-1} \varphi(T^l(b, W))$ is integrable with respect to $\mathbb{P}_A$, and its integral is exactly 
$$
\frac{1}{(\mathbb{P} \times \mathbf{u})(A)} \int_Y \varphi \, d(\mathbb{P} \times \mathbf{u}) = M \int_G \log N(g) \, d\mu(g) < \infty.
$$
This bounds the first moment of $\mu_W$.

Let $\tau_m$ be the time of the $m$-th return to $W$. By Birkhoff's Ergodic Theorem applied to the return time function $\tau(b)$ on the space $(A, T_A, \mathbb{P}_A)$, we have almost surely
$$
\lim_{m \to \infty} \frac{\tau_m}{m} = \int_A \tau \, d\mathbb{P}_A = \frac{1}{(\mathbb{P} \times \mathbf{u})(A)} = M.
$$
Finally, the product of the first $m$ blocks distributed according to $\mu_W$ is exactly the product of the first $\tau_m$ original matrices: $h_m \dots h_1 = g_{\tau_m} \dots g_1$. By definition, the first Lyapunov exponent for $\mu_W$ is given by
$$
\lambda_1(\mu_W) = \lim_{ m \to \infty } \frac{1}{m} \log \sigma_1(h_m \dots h_1) = \lim_{ m \to \infty } \frac{\tau_m}{m} \frac{1}{\tau_m} \log \sigma_1(g_{\tau_m} \dots g_1).
$$
By Furstenberg-Kesten theorem (or Kingman's subadditive ergodic theorem), the sequence \\ $\frac{1}{n} \log \sigma_1(g_n \dots g_1)$ converges to $\lambda_1(\mu)$ almost surely. Because $\tau_m \to \infty$ almost surely, the subsequence converges to the exact same limit:
$$
\lim_{ m \to \infty } \frac{1}{\tau_m} \log \sigma_1(g_{\tau_m} \dots g_1) = \lambda_1(\mu).
$$
Multiplying the limits yields $\lambda_1(\mu_W) = M \cdot \lambda_1(\mu)$.
\end{proof}

\begin{rmk}
If $W$ is strongly $\Gamma_\mu$-irreducible, then $\#\{ g W : g \in \Gamma_\mu \} = \frac{d}{\mathrm{dim}(W)}$. 
\end{rmk}

\begin{rmk}
For a general compactly supported probability measure $\mu$, the induced probability measure $\mu_W$ would not be compactly supported. However, this should not worry us because we only use formula (\ref{eq:le}) to describe the structure of the semigroup $\Gamma_\mu$ (Proposition \ref{prop:struc_assump}); the measure $\mu_W$ will not be used in any other way.
\end{rmk}

\begin{thm}[Furstenberg \cite{Fur63}]\label{lemma:criterion}
Let $\mu$ be a probability measure on $G$ with finite first moment. If $\lambda_{1}(\mu)=\lambda _{d}(\mu)$ and the action of $\Gamma _{\mu}$ on $\mathbb{R}^{d}$ is strongly irreducible, then $\Gamma _{\mu}$ preserves a conformal structure on $\mathbb{R}^{d}$.
\end{thm}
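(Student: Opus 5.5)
Our plan follows Furstenberg's original argument, which runs through the stationary measures on $X$ and Furstenberg's formula (Theorem~\ref{lemma:formula}). Since $\lambda_1(\mu)=\lambda_d(\mu)$, the exterior-power identity $\lambda_1(\wedge^d\mu)=\sum_l \lambda_l(\mu)$ gives $\lambda_1(\mu)=\frac1d\int\log|\det g|\,d\mu(g)$. We first normalise. Replacing $\mu$ by its image under $g\mapsto |\det g|^{-1/d}g$ leaves the projective action unchanged and shifts every exponent by the same amount, so we may assume $|\det g|=1$ on the support and $\lambda_1=\dots=\lambda_d=0$. The goal is then to show that $\Gamma_\mu$ is contained in a conjugate of $\mathrm{O}(d)$, or at least that $\Gamma_\mu$ is relatively compact in $\mathrm{SL}^{\pm}(d,\R)$. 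A compact group preserves a Euclidean inner product, obtained by averaging against Haar measure, and this inner product gives the required conformal structure.

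\textbf{Step 1 (all stationary measures are maximal; the $\wedge^2$ trick).} Apply the same reasoning on $\wedge^2\R^d$ and on the dual $(\R^d)^*$, where $\mu$ is replaced by $\check\mu$, the image of $\mu$ under $g\mapsto (g^{-1})^{T}$. Its exponents are $-\lambda_{d+1-i}(\mu)=0$, so the top exponent is also $0$. Take a $\mu$-stationary $\nu$ on $X$. Since $\lambda_1=\lambda_d$, every $\nu$ attains the maximum, so $\int\Phi\,d\mu\,d\nu=0$. The key consequence to extract is that the cocycle $\Phi$ has zero mean along the stationary random walk for every stationary measure. Strong irreducibility, used as in Furstenberg's theory, guarantees that stationary measures give zero mass to every proper projective subspace.

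\textbf{Step 2 (tightness of the random products).} We want to show that $\|g_n\cdots g_1\|$ stays bounded on a set of positive probability, or more precisely that the family $\{g_n\cdots g_1\}$ is tight modulo scalars. Suppose it is not tight. Choose $n_k$ with $\|g_{n_k}\cdots g_1\|\to\infty$ (determinant fixed at $1$), and normalise $g_{n_k}\cdots g_1/\|\cdot\|$. By compactness these converge to a nonzero singular matrix $\bar g$. Using stationarity, $\nu=\mathbb E[(g_1\cdots g_n)_*\nu]$ in the backward ordering, and the martingale convergence theorem gives limits $(g_1\cdots g_n)_*\nu\to\nu_b$ almost surely. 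Unboundedness forces the limit $\nu_b$ to be carried by the image of a singular limit map, which is a proper projective subspace, hence a quasi-projective degenerate measure. Non-degeneracy of $\nu$ (Step 1) ensures that the singular limit map pushes $\nu$ to its image. From this we derive a contradiction with zero exponent. Equal exponents and a singular limit together contradict the fact that the exponent gap $\lambda_1-\lambda_d$ detects the contraction. Concretely, an unbounded sequence contracting $\nu$ onto a proper subspace would produce a stationary measure on a subspace of $\wedge^k$ with a strictly positive drift. This in turn gives $\lambda_1(\wedge^k\mu)>k\lambda_1(\mu)$, contradicting equality.

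\textbf{Step 3 (compactness and an invariant form).} Once $\Gamma_\mu$, acting on the normalised matrices, is shown to be relatively compact, its closure is a compact semigroup in a group, and hence a compact group $H$. Averaging the standard inner product over the Haar measure of $H$ yields an $H$-invariant inner product. Undoing the determinant normalisation, every $g\in\Gamma_\mu$ is a scalar multiple of an isometry, so the corresponding conformal structure is preserved.

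\textbf{Main obstacle.} We expect Step 2 to be the hardest part: turning non-tightness into a strict inequality between exponents. The approach we would try first is Furstenberg's argument with the measures $g_*\nu$. For $\nu$ non-degenerate, $\Phi(g,x)$ integrated against $\nu$ controls $\log\|g\|$ up to a bounded error, and this bound is uniform by strong irreducibility. Combining this estimate with the cocycle identity and the vanishing of the mean then forces $\sup_n \mathbb E\log\|g_n\cdots g_1\|<\infty$, which yields tightness.
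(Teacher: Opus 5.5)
The paper does not prove this statement; it quotes it from \cite{Fur63}, so your sketch has to stand on its own. Several parts are fine: the normalisation $|\det g|=1$, the properness of stationary measures, the vanishing of $\int\phi_\mu\,d\nu$ for every stationary $\nu$, and Step~3. The $\wedge^2$ and dual remarks in Step~1 are never used.

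The gap is Step~2, which carries the entire content of the theorem. In its first version, the passage from ``$\nu_b$ is carried by a proper subspace'' to ``a stationary measure on a subspace of $\wedge^k\R^d$ with strictly positive drift'' is only asserted. Turning a degenerate limit measure into a contradiction with $\lambda_1=\lambda_d$ is exactly Furstenberg's theorem. It needs an invariance principle (equal extremal exponents force $g_*\nu=\nu$ for $\mu$-a.e.\ $g$) or an entropy/exponent inequality, and you supply neither.

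Your fallback in the last paragraph has two further problems. First, the bound $\int\Phi(g,x)\,d\nu(x)\ge\log\|g\|-C$ uniformly in $g$ amounts to $\sup_{\|u\|=1}\int\log\frac{1}{|\langle x,u\rangle|}\,d\nu(x)<\infty$, with $x$ taken as a unit representative. That is a logarithmic regularity of $\nu$ near hyperplanes, and properness does not give it. Properness only yields the uniform modulus $\sup_u\nu\{|\langle x,u\rangle|<\delta\}\to 0$ as $\delta\to 0$. Second, even a bound $\sup_n\mathbb{E}\log\|g_n\cdots g_1\|<\infty$ gives only tightness of $(\mu^{*n})_n$. It does not give relative compactness of $\Gamma_\mu$, which is what Step~3 uses, because tightness says nothing about elements of $\Gamma_\mu$ reached with tiny probability.

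Both defects can be repaired with your own ingredients.
\begin{enumerate}[label=(\alph*)]
\item For $|\det g|=1$ you have $\Phi(g,x)\ge\log\|g\|+\log|\langle x,u_g\rangle|$, where $u_g$ is a unit top right-singular vector. You also have $\Phi(g,x)\ge\log\sigma_d(g)\ge-(d-1)\log\|g\|$.
\item Choose $\delta$ with $\sup_u\nu\{|\langle x,u\rangle|<\delta\}\le\frac{1}{2d}$. Splitting the integral gives $\int\Phi(g,x)\,d\nu(x)\ge\frac12\log\|g\|+\log\delta$ for all such $g$.
\item Combining this with $0=n\int\phi_\mu\,d\nu=\mathbb{E}\int\Phi(g_n\cdots g_1,x)\,d\nu(x)$ yields $\sup_n\mathbb{E}\log\|g_n\cdots g_1\|\le 2\log(1/\delta)$.
\item Apply this to the backward products, which have the same law. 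With positive probability $g_1\cdots g_{n_k}\to b$ along a subsequence, with $b$ invertible.
\item The martingale increments are square-summable, so $(g_1\cdots g_n g)_*\nu$ and $(g_1\cdots g_n)_*\nu$ have the same limit $\nu_b$ for $\mu$-a.e.\ $g$. Hence $b_*g_*\nu=b_*\nu$, that is, $g_*\nu=\nu$ for $\mu$-a.e.\ $g$, and $\nu$ is $\Gamma_\mu$-invariant.
\item Now your singular-limit argument applies to an arbitrary unbounded sequence $h_k\in\Gamma_\mu$: $\nu=(h_k)_*\nu\to\bar h_*\nu$, which is carried by the projectivisation of $\mathrm{Im}\,\bar h$. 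This contradicts properness, so $\Gamma_\mu$ is bounded and Step~3 concludes.
\end{enumerate}
Without this invariance step, or an equivalent one, the proposal does not reach the conclusion.
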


Assembling Furstenberg's criterion (Theorem \ref{lemma:criterion}) with Lemma~\ref{lemma:admits} and Lemma~\ref{lemma:coarsest} gives the main structural result used in the proof of Theorem~\ref{thmA}.

\begin{prop} \label{prop:struc_assump}
Let $\mu$ be a compactly supported probability measure on $G$. Assume $\mu$ is semisimple and its Lyapunov spectrum satisfies $\lambda_1(\mu) = \lambda_d(\mu)$. Let $\Gamma_\mu$ be the subsemigroup generated by the support of $\mu$. Then $\mathbb{R}^d$ admits a coarsest virtually $\Gamma_\mu$-conformal decomposition. 
\end{prop}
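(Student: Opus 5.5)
By Lemma~\ref{lemma:coarsest}, it suffices to show that $\mathbb{R}^d$ admits \emph{some} virtually $\Gamma_\mu$-conformal decomposition. By Lemma~\ref{lemma:admits}, since $\mu$ is semisimple, this reduces to one claim. For every strongly irreducible virtually $\Gamma_\mu$-invariant subspace $W$, the return semigroup $\Gamma_{W}=\{g\in\Gamma_\mu: gW=W\}$ must preserve a conformal structure on $W$. I will obtain this conformal structure by applying Furstenberg's criterion (Theorem~\ref{lemma:criterion}) to the induced measure on $W$.

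Fix such a $W$ and consider the return measure $\mu_W$ from Lemma~\ref{lemma:return}. By that lemma $\mu_W$ has finite first moment. Let $\bar\mu_W$ be its image under the restriction map $h\mapsto h|_W\in\mathrm{GL}(W)$; it still has finite first moment, since $\|h|_W\|\le\|h\|$ and $\|(h|_W)^{-1}\|\le\|h^{-1}\|$.

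\textbf{Step 1: one-point spectrum for $\bar\mu_W$.} Let $M=\#\{gW:g\in\Gamma_\mu\}$. The products $h_m\cdots h_1$ of the return blocks equal $g_{\tau_m}\cdots g_1$, and $\tau_m/m\to M$ almost surely. Because $\lambda_1(\mu)=\lambda_d(\mu)=:\lambda$, all singular values of $g_n\cdots g_1$ grow like $e^{n\lambda+o(n)}$. Restriction to $W$ satisfies $\sigma_d(g)\le\sigma_k(g|_W)\le\sigma_1(g)$, so every singular value of $(h_m\cdots h_1)|_W$ grows like $e^{mM\lambda+o(m)}$. Hence $\lambda_1(\bar\mu_W)=\lambda_{\dim W}(\bar\mu_W)=M\lambda$, in agreement with \eqref{eq:le}.

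\textbf{Step 2: strong irreducibility and the support.} I must identify the semigroup generated by $\mathrm{supp}(\bar\mu_W)$ and show it acts strongly irreducibly on $W$. Every element of $\Gamma_W$ is a product $g_n\cdots g_1$ with $g_i\in\mathrm{supp}\,\mu$ (or a limit of such products). Cutting such a word at its successive returns to $W$ writes it as a product of first-return blocks. Each block lies in the support of $\mu_W$, up to closure: an open neighbourhood of a first-return word has positive $\mu^{\otimes n}$-mass, and nearby words still return first at the same time, because the set of return-preserving maps $\{g: gU=U'\}$ is closed and the finite orbit is discrete. Handling these closure issues carefully is the main technical obstacle. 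The semigroup $\Gamma_{\mu_W}$ then has the same Zariski closure as $\Gamma_W$, and $\Gamma_{\mu_W}|_W$ has the same virtually invariant subspaces in $W$ as $\Gamma_W|_W$.

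It remains to show that $\Gamma_W$ acts strongly irreducibly on $W$. Suppose $U\subsetneq W$ is nonzero with finite $\Gamma_W$-orbit. Choose finitely many $g_i\in\Gamma_\mu$ with $\{g_iW\}$ the whole orbit of $W$. Any $g\in\Gamma_\mu$ then satisfies $gU\subset g W=g_iW$ for some $i$, and I claim the $\Gamma_\mu$-orbit of $U$ is finite. To see this, pass to the Zariski closure $H$, a group, as in the proof of Lemma~\ref{lemma:admits}. $H_W$ has finite index in $H$, so any subspace with finite $H_W$-orbit has finite $H$-orbit. This contradicts the strong irreducibility of $W$ under $\Gamma_\mu$.

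\textbf{Step 3: conclusion.} Theorem~\ref{lemma:criterion} now applies to $\bar\mu_W$ on $W$ and gives a $\Gamma_{\mu_W}$-invariant conformal structure on $W$. It is therefore $H_W$-invariant, since preservation of a conformal structure is a Zariski-closed condition on the group, and hence $\Gamma_W$-invariant. Lemma~\ref{lemma:admits} then yields a virtually $\Gamma_\mu$-conformal decomposition, and Lemma~\ref{lemma:coarsest} yields a coarsest one.
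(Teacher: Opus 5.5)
Your proof follows the paper's route: induce on $W$ via the first-return measure, apply Furstenberg's criterion (Theorem~\ref{lemma:criterion}) on $W$, and conclude with Lemmas~\ref{lemma:admits} and~\ref{lemma:coarsest}. It is more careful than the paper's three-line proof in two places.
\begin{itemize}
\item The paper cites Lemma~\ref{lemma:return} for $\lambda_1(\mu_W)=\lambda_{\dim W}(\mu_W)$, although that lemma only treats $\lambda_1$ of $\mu_W$ on $G$. Your singular-value sandwich in Step~1 is the right justification.
\item The paper simply asserts $\Gamma_{\mu_W}=(\Gamma_\mu)_W$ and strong irreducibility on $W$. Your first-return decomposition in Step~2 is the right justification for this. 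A first-return word with letters in $\mathrm{supp}\,\mu$ already lies in $\mathrm{supp}\,\mu_W$, so no closure is needed in that direction.
\end{itemize}

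There is one inference in Step~2 that does not follow as written. From \emph{$U$ has finite $\Gamma_W$-orbit} you pass to \emph{$U$ has finite $H_W$-orbit}. A priori you only get a finite orbit under the Zariski closure $\overline{\Gamma_W}^{Z}$, which is merely contained in $H_W$. You need $\Gamma_W$ to be Zariski dense in $H_W$.

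This is true and short to prove. $H_W$ is Zariski closed of finite index in $H$, so $H$ is a finite disjoint union of Zariski closed cosets $c$. Hence $H=\overline{\Gamma_\mu}^{Z}=\bigcup_c \overline{\Gamma_\mu\cap c}^{Z}$ with $\overline{\Gamma_\mu\cap c}^{Z}\subset c$. Intersecting with $H_W$ gives $\overline{\Gamma_W}^{Z}=H_W$.

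Alternatively, you can complete the elementary argument you began. $\Gamma_\mu$ acts on the finite orbit $\mathcal{O}$ of $W$ through a finite group of permutations. So for each $V\in\mathcal{O}$ there is $k_V\in\Gamma_\mu$ with $k_V V=W$. Then $k_{gW}\,g\in\Gamma_W$ for every $g\in\Gamma_\mu$, so $gU\in\bigcup_{V\in\mathcal{O}}k_V^{-1}(\Gamma_W U)$, which is a finite set.

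The same density is tacitly used when you call the conformal structure $H_W$-invariant in Step~3, but there it is not needed. The inclusion $\Gamma_W|_W\subseteq\Gamma_{\bar\mu_W}$ from your first-return decomposition already yields $\Gamma_W$-invariance. That inclusion alone also yields strong irreducibility of $\Gamma_{\bar\mu_W}$ on $W$, since enlarging a semigroup can only remove virtually invariant subspaces.
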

\begin{proof}
Let $W$ be any strongly $\Gamma_\mu$-irreducible virtually $\Gamma_\mu$-invariant subspace. Consider the induced probability measure $\mu_W$ on $\mathrm{GL}(W)$. The subsemigroup generated by $\mathrm{supp}(\mu_W)$ is
$$
\Gamma _{\mu _W} = \left\{ g \in \Gamma _{\mu}  : gW = W \right\} = \left( \Gamma_{\mu} \right)_W.
$$
Since $\lambda_{1}(\mu _{W})=\lambda_{\mathrm{dim}(W)}(\mu _{W})$ (by Lemma \ref{lemma:return}) and $\Gamma _{\mu_W}$ is strongly irreducible on $W$, $(\Gamma_\mu)_{W}$ preserves a conformal structure on $W$ by Furstenberg's criterion (Theorem \ref{lemma:criterion}). Lemmas \ref{lemma:admits} and \ref{lemma:coarsest} finish the proof.
\end{proof}

\subsection{Aperiodicity on finite groups}

A probability measure on a finite group $F$ is called \emph{aperiodic} if its support generates $F$ as a semigroup and its (pushforward) image is not a Dirac mass under any non-trivial group homomorphism to a cyclic group. 

Recall that we denote by $F$ the image of $\Gamma$ through a permutation on a $\Gamma$-invariant family of subspaces.
The $\Gamma_\mu$-invariant family of subspaces is taken to be the components of the coarsest virtually $\Gamma_\mu$-conformal decomposition, for a semisimple probability measure $\mu$ on $G$ with a one-point Lyapunov spectrum.
Then $F_\mu$ is the image of $\Gamma_\mu$ through the permutation on these decomposition components.
We slightly abuse the notation $\mu$: the probability measure on $F_\mu$ induced by $\mu$ is also denoted by $\mu$.

The following two lemmas regarding aperiodicity and the exponential convergence of the random walk can be found, for example, in Benoist and Quint \cite{BeQ16}. For completeness, we provide their proofs based on standard algebraic and spectral arguments.

\begin{lemma}[Aperiodicity] \label{lemma:aper}
Let $\mu$ be a probability measure on a finite group $F$ whose support generates $F$. There exists a normal subgroup $A$ of $F$, such that the convolution power $\mu ^{*p}$ is aperiodic in $A$, where $p = \left| F / A \right|$.
\end{lemma}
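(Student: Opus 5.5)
The plan is to take $A$ to be the kernel of the largest cyclic quotient through which $\mu$ becomes a point mass. Since $F$ is finite, the semigroup generated by $\mathrm{supp}(\mu)$ is all of $F$ (every element has finite order, so inverses are positive powers). Consider the subgroup $H$ generated by all quotients $g^{-1}h$ with $g,h\in\mathrm{supp}(\mu)$, and let $A$ be its normal closure. Fix $g_0\in\mathrm{supp}(\mu)$. Then $\mathrm{supp}(\mu)\subset g_0A$, and $F=\langle \mathrm{supp}(\mu)\rangle$ is generated by $A$ and $g_0$. Hence $F/A$ is cyclic, generated by the class of $g_0$. Set $p=|F/A|$.

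First, check that $\mu^{*p}$ lives on $A$. Every element of $\mathrm{supp}(\mu^{*p})$ is a product $g_p\cdots g_1$ of support elements. Its class in $F/A$ is $[g_0]^p=1$, so $\mathrm{supp}(\mu^{*p})\subset A$.

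Next, check that $\mathrm{supp}(\mu^{*p})$ generates $A$ as a semigroup. Since $A$ is finite, it suffices to show it generates $A$ as a group. The natural route is to show that the group $A'=\langle \mathrm{supp}(\mu^{*p})\rangle$ contains all conjugates of the elements $g^{-1}h$. Products of length $p$ of support elements let us write $(g_0^{p-1}h)(g_0^{p-1}g)^{-1}=g_0^{p-1}hg^{-1}g_0^{-(p-1)}$, and more generally $w\,hg^{-1}w^{-1}$ for any word $w$ of length $p-1$ in the support. Since the words $\mathrm{supp}(\mu)^{k}$ for all $k$ exhaust $F$ (finiteness), one needs conjugates by arbitrary elements. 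This follows because $A'$ is normalized by $\mathrm{supp}(\mu)$: for $s$ in the support, $s\,(g_p\cdots g_1)s^{-1} = (s g_p\cdots g_2)(g_1 s^{-1})$. Writing $g_1s^{-1}$ via differences and rearranging using length-$p$ words shows $sA's^{-1}\subset A'$, so $A'$ is normal in $F$. Both $hg^{-1}$ and $g^{-1}h$ then lie in $A'$, and hence so does their normal closure $A$. This bookkeeping is the main obstacle, and I would handle it carefully by working in $F/A'$: there all length-$p$ products coincide with $g_0^p$ (after showing normality), which forces all support elements to coincide mod $A'$, and hence $A\subset A'$.

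Finally, prove aperiodicity. Suppose $\chi:A\to\mathbb{Z}/m$ is a nontrivial homomorphism with $\chi_*\mu^{*p}$ a Dirac mass at $c$. The aim is to contradict the maximality built into $A$. Take all length-$p$ products and replace one factor $g$ by $h$. The value $\chi(w_1 g w_2)$ must then equal $\chi(w_1 h w_2)$. Expanding, this says $\chi$ kills elements of the form $w_1 g w_2 (w_1 h w_2)^{-1}=w_1 (g h^{-1}) w_1^{-1}$. These are all conjugates of the generators of $A$ (by the same word argument as above), so $\chi\equiv 0$ on $A$, a contradiction. One subtlety is that $\chi$ need not be $F$-conjugation invariant. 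I expect this to be handled by the fact that the relevant conjugators range over all of $F$ and that $\ker\chi$ contains the generating set of conjugates directly, so no invariance is needed.
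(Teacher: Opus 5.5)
Your overall argument works, and one design choice makes it better than the paper's proof, but the aperiodicity step has a gap as written.

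\textbf{Comparison with the paper.} Your skeleton matches the paper's: quotients $g^{-1}h$ of elements of $S=\mathrm{supp}(\mu)$, a cyclic quotient generated by $[g_0]$, $S^p\subset A$, then generation and aperiodicity by swapping one factor. The real difference is that you take $A$ to be the \emph{normal closure} of $\langle g^{-1}h\rangle$, and this choice is necessary.

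The paper takes $A=\langle x^{-1}y : x,y\in S\rangle$ itself and argues normality via the free group $F(S)$. But the exponent-sum-zero kernel there is the \emph{normal closure} of the $x^{-1}y$, not the subgroup they generate. For $F=S_3$ and $S=\{(12),(123)\}$, the subgroup $\langle x^{-1}y\rangle$ is generated by a single transposition, so it is not normal. It has index $3$, yet $(12)^3=(12)$ does not lie in it, so the paper's argument fails there. Your choice gives $A=S_3$ and $p=1$.

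The price of your choice is that $A$ is generated by \emph{conjugates} of the $g^{-1}h$, and these must be handled in both later steps. In the generation step you handle them correctly. The clean way to fill in ``writing $g_1s^{-1}$ via differences'' is the right-suffix identity $g_1s^{-1}=(g_1u)(su)^{-1}$ with $u\in S^{p-1}$, where both factors lie in $S^p$. This gives $s(g_p\cdots g_1)s^{-1}=(sg_p\cdots g_2)(g_1s^{-1})\in A'$, hence $A'=\langle S^p\rangle$ is normal, and then $A\subset A'$.

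\textbf{The gap.} Swapping one factor in a word of length $p$ only shows that $\chi$ kills $w(gh^{-1})w^{-1}$ for $w\in S^0\cup\cdots\cup S^{p-1}$. These conjugators need not range over all of $F$. For $F=S_3$ and $S=\{(12),(13)\}$ one has $A=A_3$ and $p=2$, and the conjugators are only $e,(12),(13)$. So your claims that ``the relevant conjugators range over all of $F$'' and that ``no invariance is needed'' are not right as stated; an invariance is exactly what closes the gap.

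\textbf{The fix.} Since $\chi$ takes values in an abelian group, $\chi(aya^{-1})=\chi(y)$ for $a,y\in A$. The elements $g_0^k\in S^k$ with $0\le k<p$ represent all cosets of $A$. Write any $f\in F$ as $f=ag_0^k$; then $\chi(f(gh^{-1})f^{-1})=\chi(g_0^k(gh^{-1})g_0^{-k})=0$.

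Alternatively, if $\chi\equiv c$ on $S^p$, then $\chi\equiv kc$ on $S^{kp}=(S^p)^k$. Swapping a factor in words of length $kp$ then yields conjugators in every $S^j$ with $j<kp$, and these do exhaust $F$.

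Either way, since $gh^{-1}=g(h^{-1}g)g^{-1}$, the $F$-conjugates of the $gh^{-1}$ generate $A$, so $\chi\equiv 0$. With this line added your proof is complete, and it is in effect a correct repair of the paper's argument.
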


\begin{proof}
Let $S = \mathrm{supp}(\mu)$. By assumption, the semigroup generated by $S$ is the entire finite group $F$. Define $A$ to be the subgroup of $F$ generated by all differences between elements in the support:
$$
A = \langle x^{-1}y : x, y \in S \rangle.
$$
To see that $A$ is a normal subgroup, let $F(S)$ be the free group generated by the set $S$, and let $\pi: F(S) \to F$ be the natural evaluation homomorphism. Because $S$ generates $F$, $\pi$ is surjective. 

Now, define a homomorphism $\psi: F(S) \to \mathbb{Z}$ that maps every generator $s \in S$ to $1$. The kernel of this map, $\ker(\psi)$, consists of all words in $F(S)$ with an exponent sum of zero, which is classically generated by elements of the form $x^{-1}y$ for $x, y \in S$. Since $\ker(\psi)$ is a normal subgroup of $F(S)$ and $\pi$ is surjective, its image $\pi(\ker(\psi))$ must be a normal subgroup of $F$. By definition, this image is exactly $A$, proving $A \triangleleft F$.

Because $x^{-1}y \in A$ for all $x, y \in S$, we have $x A = y A$. Therefore, the entire support $S$ is contained in a single coset of $A$, say $c A$ for some $c \in S$. Since $S$ generates $F$, the quotient group $F / A$ is generated by the single element $c A$. This implies $F / A$ is a cyclic group. Let $p = |F / A|$ be its order.

Now consider the $p$-th convolution $\mu^{*p}$. Its support is $S^{p}$, which consists of products of $p$ elements from $S$. In the quotient group $F / A$, any element in $S^{p}$ projects to $(c A)^{p} = A$, the identity element. Thus, $S^{p} \subset A$, meaning $\mu^{*p}$ is a probability measure supported on $A$.

To see that $\mu^{*p}$ is aperiodic in $A$, we must verify two conditions: first, that the support of $\mu^{*p}$ generates $A$ as a semigroup, and second, that its image is not a Dirac mass under any non-trivial group homomorphism from $A$ to a cyclic group.

For the first condition, let $S_{p} = S^{p}$ be the support of $\mu^{*p}$. Since $S$ generates the finite group $F$, any subsemigroup it generates is also a subgroup. Let $A'$ be the subgroup of $A$ generated by $S_{p}$. Fix an element $c \in S$. Clearly, $c^{p} \in S_{p} \subset A'$. For any $x \in S$, the product $x c^{p-1}$ consists of $p$ elements of $S$, so $x c^{p-1} \in S_{p} \subset A'$. Consequently, the element $(x c^{p-1})(c^{p})^{-1} = x c^{-1}$ must also belong to $A'$. Because the normal subgroup $A$ is generated by elements of the form $x^{-1}y$ for $x, y \in S$, and any such element can be rewritten as $(x c^{-1})^{-1}(y c^{-1})$, it follows that $A'$ contains all generators of $A$. Thus, $A' = A$, meaning the support of $\mu^{*p}$ generates $A$.

For the second condition, suppose for the sake of contradiction that there exists a group homomorphism $\phi: A \to \mathbb{Z}/k\mathbb{Z}$ such that the pushforward $\phi_*(\mu^{*p})$ is a Dirac mass. This implies there exists a fixed element $z \in \mathbb{Z}/k\mathbb{Z}$ such that $\phi(u) = z$ for all $u \in S_{p}$. Using the same $c \in S$ and arbitrary $x \in S$ as above, we evaluate the homomorphism on their products: $\phi(c^{p}) = z$ and $\phi(x c^{p-1}) = z$. Since $A$ acts transitively on these quotients, we can take the difference in the cyclic group:
$$
\phi(c^{-1}x) = \phi((c^{p})^{-1} x c^{p-1}) = -z + z = 0.
$$
Since this holds for any $x \in S$, we evaluate the generators of $A$: for any $x, y \in S$,
$$
\phi(x^{-1}y) = \phi((c^{-1}x)^{-1} (c^{-1}y)) = 0 + 0 = 0.
$$
Because the elements $x^{-1}y$ generate the entire subgroup $A$, we must have $\phi(a) = 0$ for all $a \in A$. This establishes that $\phi$ is the trivial homomorphism. Therefore, $\mu^{*p}$ cannot map to a Dirac mass under any non-trivial homomorphism to a cyclic group, concluding the proof that $\mu^{*p}$ is aperiodic in $A$.
\end{proof}

\begin{lemma} \label{lemma:markov}
Let $\mu$ be an aperiodic probability measure on a finite group $F$. There exist constants $C > 0$ and $\rho \in (0,1)$ such that for any $n \in \mathbb{N}^*$ and any $s \in F$,
$$
\left| \mu^{*n}(\{ s \}) - \frac{1}{|F|} \right| \leq C \rho^n.
$$
\end{lemma}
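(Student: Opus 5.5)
We would treat this as the standard spectral-gap argument for the convolution operator on the finite-dimensional space $\ell^2(F)$. Let $T f(s) = \sum_{g} \mu(\{g\}) f(g^{-1}s)$, so that $\mu^{*n}(\{s\}) = (T^n \delta_e)(s)$, and write $\mathbf{u}$ for the uniform measure on $F$. Since $T$ is a Markov operator (doubly stochastic, because $F$ is a group), it preserves the constants and the orthogonal complement $\ell^2_0(F)$ of functions with zero sum. The goal is to show that the restriction $T|_{\ell^2_0}$ has spectral radius $\rho_0<1$. Then for any $\rho\in(\rho_0,1)$ we have $\|T^n|_{\ell^2_0}\|\le C\rho^n$. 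This follows from Gelfand's formula, or from the Jordan form, since the space is finite dimensional. Applying this to $\delta_e - \mathbf{u}\in\ell^2_0$ and using $\|\cdot\|_\infty\le\|\cdot\|_2$ gives the stated bound.

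The key step is to exclude eigenvalues of modulus one on $\ell^2_0$. Every eigenvalue has $|\lambda|\le1$, because $T$ is an average of the unitary left translations $L_g$. Suppose $Tf=\lambda f$ with $|\lambda|=1$ and $f\neq 0$. Strict convexity of the $\ell^2$ norm, applied to $\lambda f=\sum_g \mu(g) L_g f$ with each $\|L_g f\|=\|f\|$, forces $L_g f=\lambda f$ for every $g\in S=\mathrm{supp}(\mu)$. Since $S$ generates $F$ as a semigroup, the subgroup $\{h\in F: L_h f\in \mathbb{C}f\}$ contains $F$, and $h\mapsto\chi(h)$ with $L_hf=\chi(h)f$ is a character $\chi: F\to \mathbb{C}^*$ with $\chi\equiv\lambda$ on $S$.

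The image $\chi(F)$ is a finite subgroup of $\mathbb{C}^*$, hence cyclic. By the aperiodicity hypothesis, the pushforward of $\mu$ under $\chi$, which is the Dirac mass at $\lambda$, cannot be a Dirac mass under a nontrivial homomorphism to a cyclic group. So $\chi$ is trivial and $\lambda=1$. Then $f$ is invariant under all left translations, so $f$ is constant, and $f\in\ell^2_0$ forces $f=0$, a contradiction. Hence $\rho_0<1$.

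The main obstacle is the step that upgrades the equality $L_gf=\lambda f$, known only on $S$, to a genuine character of $F$. This works because $L$ is a representation, so the set of elements acting by a scalar on $f$ is closed under products, and in a finite group a subsemigroup is a subgroup. One also has to be careful that the eigenvector may be complex, so the argument should be run on $\ell^2(F;\mathbb{C})$. The remaining points are routine: the supremum over $s\in F$ is finite, and the constant $C$ absorbs the polynomial factors coming from Jordan blocks once $\rho$ is taken strictly larger than $\rho_0$.
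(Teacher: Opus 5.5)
Your argument is correct, and it has the same skeleton as the paper's: the convolution operator is doubly stochastic, it preserves the zero-sum subspace, its spectral radius there is $<1$, and the bound follows by applying this to $\delta_e-\mathbf{u}$. The two proofs differ in how the spectral gap is justified. The paper invokes the Perron--Frobenius theorem for irreducible aperiodic finite Markov chains. In doing so it tacitly identifies its group-theoretic aperiodicity (no Dirac image under a nontrivial homomorphism to a cyclic group) with aperiodicity of the chain. That identification is standard: a period $k>1$ gives a well-defined surjection $F\to\mathbb{Z}/k\mathbb{Z}$ sending all of $\mathrm{supp}(\mu)$ to $1$. The paper does not spell it out, however. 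You instead exclude unimodular eigenvalues on $\ell^2_0$ directly. Strict convexity of the Hilbert norm forces $L_gf=\lambda f$ on the support, the representation property upgrades this to a character of $F$ constant on the support, and the hypothesis, used exactly as stated, forces that character to be trivial. Your route is self-contained and shows where each part of the definition of aperiodicity enters: semigroup generation yields a character on all of $F$, and the cyclic-group condition forces it to be trivial. It also handles possible Jordan blocks correctly by taking $\rho$ strictly above the spectral radius, a point the paper's phrase ``accounting for the eigenvectors'' glosses over. The paper's route is shorter, but it leans on a textbook theorem whose hypothesis is not literally the one assumed.
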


\begin{proof}
The sequence of random variables defined by the products $g_n \dots g_1$ induces a left random walk on the finite group $F$. Because left multiplication by any group element acts as a bijection on $F$, the transition matrix $P$ is doubly stochastic. Consequently, the uniform distribution $s \mapsto \frac{1}{|F|}$ is preserved under this action and serves as a stationary measure for the Markov chain.

Since $\mu$ is an aperiodic probability measure whose support generates $F$, the random walk is both irreducible and aperiodic. By the Perron-Frobenius theorem for finite-state Markov chains, $P$ has a simple eigenvalue at $1$, corresponding to the constant functions, and all other eigenvalues strictly reside within the open unit disk.

Let $V_0 = \{\varphi: F \to \mathbb{R} \mid \sum_{s \in F} \varphi(s) = 0\}$ be the subspace of functions with zero mean. The transition matrix $P$ preserves $V_0$. Because the state space is finite, the spectral radius $\rho$ of $P$ restricted to $V_0$ is strictly less than $1$. Consequently, we can choose a constant $C > 0$ accounting for the eigenvectors, such that for any $n\in \mathbb{N}^*$,
$$
\sup_{s \in F} \left| \mu^{*n}(\{s\}) - \frac{1}{|F|} \right| \leq C \rho^n.
$$
\end{proof}

% ----------------------------------------------------------------------
\section{Reduction to empirical measures}\label{sec:reduction}

Having established the algebraic structure of $\Gamma_\mu$ in Section~\ref{sec:alg}, we now translate the problem of bounding $|\lambda_1(\mu') - \lambda_1(\mu)|$ into one of controlling how quickly the iterates of the Markov operator $P_\mu$ push an arbitrary probability measure toward the $\mu$-stationary measures.

From this point on we work in the non-conformal case, so we assume the coarsest virtually $\Gamma_\mu$-conformal decomposition is non-trivial: $\mathbb{R}^d = \oplus_{j \in J} W^j$ with $|J| \ge 2$. Let $F_\mu$ be the image of $\Gamma_\mu$ in $S_J$, and let $J = \cup_{i\in I} J^i$ be the partition of $J$ into $F_\mu$-orbits.

\subsection{Iterating the Markov operator}

\begin{lemma}\label{lemma:estimate1}
Let $K \subset G$ be a compact set, and let $\mu$ and $\mu'$ be probability measures on $G$ supported in $K$. Let $\nu'$ be any maximal $\mu'$-stationary probability measure on $X$. Define the iterates $\nu_n = P_\mu^n \nu'$ for $n \in \mathbb{N}$. There exists a constant $C_K \geq 2$ such that
$$
\left| \lambda_{1}(\mu') - \lambda_{1}(\mu) \right| \leq C_K^{n} d_{\mathrm W}(\mu', \mu) + \left| \int_X \phi _{\mu} \, d \nu _{n} - \lambda_{1}(\mu) \right|.
$$
\end{lemma}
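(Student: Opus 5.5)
By Furstenberg's formula (Theorem~\ref{lemma:formula}) applied to the maximal $\mu'$-stationary measure $\nu'$, we have $\lambda_1(\mu') = \int_X \phi_{\mu'}\,d\nu'$. The plan is to compare this with $\int_X \phi_\mu\,d\nu_n$ by a triangle inequality:
$$
|\lambda_1(\mu')-\lambda_1(\mu)| \le \Big|\int \phi_{\mu'}\,d\nu' - \int \phi_\mu\,d\nu'\Big| + \Big|\int \phi_\mu\,d\nu' - \int\phi_\mu\,d\nu_n\Big| + \Big|\int\phi_\mu\,d\nu_n-\lambda_1(\mu)\Big|.
$$
The last term is kept as is, so it remains to bound the first two by $C_K^n d_{\mathrm W}(\mu',\mu)$.

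For the first term, write $\phi_{\mu'}(x)-\phi_\mu(x)=\int_G \Phi(g,x)\,d(\mu'-\mu)(g)$. By Lemma~\ref{lemma:drift_lip}, the map $g\mapsto \Phi(g,x)$ is $\sqrt{L_K}$-Lipschitz on $K$, uniformly in $x$. Hence this term is at most $\sqrt{L_K}\,d_{\mathrm W}(\mu',\mu)$. As in Lemma~\ref{lemma:lip}(ii), one implicitly restricts or extends the test function to $K$, which is legitimate since both measures live on $K$.

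For the second term, use stationarity $\nu'=P_{\mu'}^n\nu'$ to rewrite it as
$$
\int \big(P_{\mu'}^n\phi_\mu - P_\mu^n\phi_\mu\big)\,d\nu',
$$
which is bounded by $\|P_{\mu'}^n\phi_\mu-P_\mu^n\phi_\mu\|_\infty$. Telescope:
$$
P_{\mu'}^n-P_\mu^n=\sum_{k=0}^{n-1}P_{\mu'}^{k}\,(P_{\mu'}-P_\mu)\,P_\mu^{n-1-k}.
$$
Since $P_{\mu'}^k$ is a contraction in sup norm, each summand is at most $\|(P_{\mu'}-P_\mu)P_\mu^{n-1-k}\phi_\mu\|_\infty$. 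By Lemma~\ref{lemma:lip}(ii) this is bounded by $\sqrt{L_K}\,\mathrm{Lip}(P_\mu^{n-1-k}\phi_\mu)\,d_{\mathrm W}(\mu',\mu)$. By Lemma~\ref{lemma:lip}(i) and Lemma~\ref{lemma:drift_lip}, $\mathrm{Lip}(P_\mu^{m}\phi_\mu)\le L_K^{m}\sqrt2 L_K$. Summing over $k$ gives a bound of the form
$$
\sqrt{2}\,n\,L_K^{n+1/2}\,d_{\mathrm W}(\mu',\mu).
$$

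Finally, combine the two bounds and absorb constants. The total is at most $(\sqrt{L_K}+\sqrt2\,n L_K^{n+1/2})\,d_{\mathrm W}(\mu',\mu)$. For $n\ge1$ this is at most $C_K^n d_{\mathrm W}(\mu',\mu)$ with, say, $C_K = 4L_K^{2}\ge 2$, using $L_K\ge1$ and $n\le 2^n$. For $n=0$ the second term vanishes and $\sqrt{L_K}\le C_K^0$ fails, so either restrict to $n\ge1$ or enlarge the argument; the main point to check carefully is exactly this bookkeeping of constants (including the $n=0$ case, where one may simply note $\nu_0=\nu'$ and use the cruder factor). There is no serious obstacle: the only subtlety is that Lemma~\ref{lemma:lip}(ii) requires Lipschitz test functions, which is why the growth $L_K^{m}$ of Lipschitz constants under $P_\mu$ enters and produces the exponential factor.
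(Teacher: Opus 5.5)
Your proposal is correct and essentially the paper's proof. It uses the same three-term split, the same $\sqrt{L_K}$ bound on the first term, and a telescoping sum controlled by Lemma~\ref{lemma:lip} and Lemma~\ref{lemma:drift_lip}. The paper telescopes $\nu'-\nu_n=\sum_m(\nu_{m-1}-\nu_m)$ using one-step stationarity, while you telescope $P_{\mu'}^n-P_\mu^n$; after integrating against $\nu'$ this gives the identical sum. As in the paper, the final absorption only works for $n\ge 1$. Your $C_K=4L_K^2$ is the safer choice: the paper's $C_K=2L_K^2$ does not dominate $\sqrt{L_K}+\sqrt{2}\,L_K^{3/2}$ at $n=1$ when $L_K$ is close to $1$.
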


\begin{proof}
By Furstenberg's formula (Theorem \ref{lemma:formula}) and the triangle inequality,
\begin{align*}
\left| \lambda_{1}(\mu') - \lambda_{1}(\mu) \right| &\leq \left|  \int_{G\times X} \Phi \, d\mu'\otimes\nu' -  \int_{G\times X} \Phi \, d \mu \otimes \nu' \right| \\
&+ \left| \int_X \phi_\mu \, d\nu' - \int_X \phi_\mu \, d\nu _{n} \right|
+ \left| \int_X \phi_\mu \, d\nu _{n} - \lambda_{1}(\mu) \right|.
\end{align*}
The first term is bounded by the Wasserstein distance $d_{\mathrm W}(\mu',\mu)$ multiplied by the Lipschitz constant of the function $g\mapsto\int_X \Phi(g,x)\,d\nu'(x)$ on the compact support:
$$
\mathrm{Lip}\left( \int_X \Phi(\cdot,x) \, d\nu'(x) \right)  \leq
\sup_{x \in X} \mathrm{Lip}(\Phi(\cdot, x)) \leq \sqrt{L_K}.
$$
Notice that because $\nu'$ is $\mu'$-stationary, we have $P_{\mu'}\nu'=\nu'$, and we can rewrite the difference as a telescopic sum:
$$
\nu' - \nu_n = \sum_{m=1}^n (\nu_{m-1} - \nu_m).
$$
With $\nu'=\nu_{0}$, we bound the second term by dualizing the operators back onto the drift function by virtue of Lemma \ref{lemma:lip} (ii):
\begin{align*}
\left| \int_X \phi_\mu \, d\nu' - \int_X \phi_\mu \, d \nu _{n} \right| &\leq \sum_{m=1}^{n} \left| \int_X P^{m-1}_{\mu}\phi_\mu \, d (P_{\mu'} \nu' - P_{\mu}\nu') \right| \\
&\leq \sum_{m=1}^{n} \sqrt{L_K} \, \mathrm{Lip}(P^{m-1}_{\mu}\phi_\mu) \, d_{\mathrm W}(\mu',\mu).
\end{align*}
By Lemma \ref{lemma:lip} (i) and Lemma \ref{lemma:drift_lip}, the iterated Lipschitz bound scales multiplicatively by $L_K^{m-1}$ such that:
$$
\mathrm{Lip}(P^{m-1}_{\mu}\phi_\mu) \leq L_K^{m-1}\mathrm{Lip}(\phi_\mu) \leq \sqrt{2} L_K^{m}. 
$$
Substituting this bound yields:
\begin{equation*} \label{eqn:second}
\left| \int_X \phi_\mu \, d\nu' - \int_X \phi_\mu \, d\nu _{n} \right| \leq \sum_{m=1}^n \sqrt{2} L_K^{m+1/2} d_{\mathrm{W}}(\mu', \mu) \leq \sqrt{2} n L_K^{n+1/2} d_{\mathrm W}(\mu',\mu).
\end{equation*}
Adding the bounds on the first two terms, and setting $C_K = 2 L_K^{2}$, the cumulative bound $\sqrt{L_K} + \sqrt{2} n L_K^{n+1/2}$ is dominated by $C_K^{n}$ for $n \ge 1$.
\end{proof}

\subsection{Partitioning the space}

The decomposition $\mathbb{R}^d = \oplus_{j\in J} W^j$ structures the projective space $X$ into regions where the drift function $\phi_\mu$ is nearly constant. The following two lemmas make this precise and provide the key decomposition of $|\int_X \phi_\mu\,d\xi - \lambda_1(\mu)|$ used in the proof of Proposition~\ref{prop:conv}.

\begin{lemma}\label{lemma:const}
Let $\mu$ be a compactly supported semisimple probability measure on $G$ with $\lambda_1(\mu) = \lambda_d(\mu)$, and let $\mathbb{R}^d=\oplus_{j\in J}W^j$ be a virtually $\Gamma_\mu$-conformal decomposition. The function $\phi _{\mu}$ is constant on every $\mathbb{P}(W^{j})$, for $j\in J$, and $\lambda_{1}(\mu)$ is the average,
$$
\lambda_{1}(\mu) = \frac{1}{\left| J^{i} \right|}\sum_{j \in J^{i}} \phi_\mu |_{\mathbb{P}(W^{j})} ,
$$
for all $i \in I$.
\end{lemma}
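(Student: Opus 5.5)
We choose coordinates adapted to the decomposition as in Definition~\ref{def:decomp}, and then transfer the conclusion to the standard norm.

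\textbf{Constancy on each $\mathbb{P}(W^j)$.} In adapted coordinates every $g\in\Gamma_\mu$ has the form $g=qa$, with $a$ block-scalar (scalar $a^j>0$ on $W^j$) and $q$ a block permutation whose blocks are orthogonal. For a unit vector $v\in W^j$ in the adapted norm $\|\cdot\|_c$ we get $\|gv\|_c=a^j(g)$, independently of $v$. Hence the adapted cocycle $\Phi_c(g,\cdot)$ is constant on $\mathbb{P}(W^j)$, with value $\log a^j(g)$, and so is $\phi^c_\mu=\int\Phi_c\,d\mu$.

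One caveat: the statement concerns $\phi_\mu$ for the standard norm, which differs from $\phi_\mu^c$ by a coboundary $f\circ g-f$ (Remark after Theorem~\ref{lemma:formula}). I would handle this either by reading the lemma in the adapted coordinates, or by noting that the basis may be chosen so that the standard norm coincides with $\|\cdot\|_c$. I would simply state that from now on $\Phi$ denotes the adapted cocycle; Furstenberg's formula is unaffected by this change.

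\textbf{The averaging identity.} Fix an orbit $J^i$. Let $c^j:=\phi_\mu|_{\mathbb{P}(W^j)}=\int\log a^j(g)\,d\mu(g)$, and let $\nu_j$ be the normalized rotation-invariant measure on $\mathbb{P}(W^j)$ for the conformal structure $[\mathcal{C}^j]$. Since $g$ maps $W^j$ conformally onto $W^{\pi_g(j)}$, we have $g_*\nu_j=\nu_{\pi_g(j)}$.

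The Markov chain on $J^i$ with transition $j\mapsto\pi_g(j)$, $g\sim\mu$, is a random walk of the finite group $F_\mu$ acting transitively on $J^i$. Its transition matrix is doubly stochastic, because each $\pi_g$ is a bijection. The uniform measure on $J^i$ is therefore stationary, so $\nu=\frac1{|J^i|}\sum_{j\in J^i}\nu_j$ is $\mu$-stationary.

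All stationary measures are maximal when $\lambda_1=\lambda_d$, so Theorem~\ref{lemma:formula} gives
$$\lambda_1(\mu)=\int_X\phi_\mu\,d\nu=\frac1{|J^i|}\sum_{j\in J^i}c^j.$$

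\textbf{Expected obstacle.} The only delicate point is the norm issue above: the claim that $\phi_\mu$ itself (not merely its integral) is constant on $\mathbb{P}(W^j)$ requires the adapted norm. I would make that identification explicit at the start, noting that it changes neither the Lyapunov exponents nor the later estimates beyond constants depending on $\mu$.
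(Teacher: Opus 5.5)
Your proposal is correct and follows the paper's proof essentially verbatim. First, compute $\phi_\mu$ on $\mathbb{P}(W^j)$ as $\int_G \log a^j(g)\,d\mu(g)$ via the block form $g=qa$. Then integrate against the stationary measure $\frac{1}{|J^i|}\sum_{j\in J^i}\mathrm{Vol}_{\mathbb{P}(W^j)}$, using that every stationary measure is maximal when $\lambda_1(\mu)=\lambda_d(\mu)$.

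Your caveat about the norm is well taken. The paper resolves it tacitly by working in the adapted basis of Definition~\ref{def:decomp}: its step $\|q^j(g)a^j(g)v^j\|=a^j(g)\|v^j\|$ presupposes that basis, exactly as you propose.
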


\begin{proof}
For any $\mathbb{R}v^{j} \in \mathbb{P}(W^{j})$, 
\begin{align*}
\phi _{\mu}(\mathbb{R}v^{j}) &= \int_G \log{ \frac{\|gv^{j}\|}{\|v^{j}\|}} \, d \mu(g) \\
& =  \int_G  \log{\frac{\| q^{j}(g) a^{j}(g)v^{j}\|}{\|v^{j}\|} } \, d \mu(g) \\
& = \int_G  \log{ a^{j}(g) }  \, d \mu(g) .
\end{align*}
The uniform probability measures $\nu ^{i} := \frac{1}{\left| J^{i} \right|} \sum_{j \in J^{i}} \mathrm{Vol}_{\mathbb{P}(W^{j})}$ are $\mu$-stationary for all $i \in I$, thus capturing the top exponent: 
$$
\lambda_{1}(\mu) = \int_X \int_G \Phi(g,x) \, d\mu(g) \otimes \nu ^{i}(x) = \frac{1}{\left| J^{i} \right|} \sum_{j\in J^{i}} \phi _{\mu}|_{\mathbb{P}(W^{j})}.
$$
\end{proof}

\begin{lemma}\label{lemma:estimate2}
Let $\mu$ be a compactly supported semisimple probability measure on $G$ with $\lambda_1(\mu) = \lambda_d(\mu)$. There exists a constant $L_K>0$ depending only on the compact support $K$ of $\mu$, such that for any probability measure $\xi$ on $X$ and any $r\in (0, \frac{1}{2})$,
$$
\left| \int_X \phi _{\mu} \, d\xi - \lambda_{1}(\mu) \right| \leq \sqrt{2} L_K \left( r + \xi(B_{r}^{c}) + \sum_{i \in I} \sum_{j,j' \in J^{i}} \left| \xi(B^{j'}) - \xi(B^{j}) \right| \right) ,
$$
where $B^{j}$ are $r$-neighborhoods of $\mathbb{P}(W^j)$ and $B^{c}_r$ is the complement $X \setminus \cup_{j\in J}B^{j}$ of their union.
\end{lemma}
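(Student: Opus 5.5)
We split the integral along the partition of $X$ into the neighbourhoods $B^j$ and the central region $B_r^c$. Since $\phi_\mu$ is $\sqrt{2}L_K$-Lipschitz (Lemma~\ref{lemma:drift_lip}), it is nearly constant on each $B^j$. Lemma~\ref{lemma:const} then expresses $\lambda_1(\mu)$ as an orbit-average of these constants. The whole estimate reduces to bookkeeping: each of the three error terms in the statement comes from one source.

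\textbf{Step 1 (constants).} Denote by $c^j$ the constant value of $\phi_\mu$ on $\mathbb{P}(W^j)$. By Lemma~\ref{lemma:drift_lip}, $|\phi_\mu(x)-c^j|\le \sqrt{2}L_K\, r$ for all $x\in B^j$. For $r<1/2$ we would like the $B^j$ to be pairwise disjoint. If they are not, we may instead fix a measurable partition by assigning overlaps to one index. We also use $|\phi_\mu|\le \log\sqrt{L_K}\le \sqrt{2}L_K$ on $X$. Finally, $|c^j-\lambda_1(\mu)|\le \sqrt{2}L_K$ trivially, since $|\Phi|\le \log\sqrt{L_K}$.

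\textbf{Step 2 (splitting).} We write
$$\int_X\phi_\mu\,d\xi-\lambda_1(\mu)=\int_{B_r^c}(\phi_\mu-\lambda_1(\mu))\,d\xi+\sum_{j}\int_{B^j}(\phi_\mu-c^j)\,d\xi+\sum_j c^j\xi(B^j)-\lambda_1(\mu)\sum_j\xi(B^j).$$
The first term is bounded by $\sqrt2 L_K\,\xi(B_r^c)$, with a constant absorbed into $L_K$ if needed. The second term is bounded by $\sqrt2L_K\, r$. For the third, we group by orbits $J^i$ and use Lemma~\ref{lemma:const}, which gives $\sum_{j\in J^i}(c^j-\lambda_1(\mu))=0$. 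Hence for any fixed $j_0\in J^i$,
$$\sum_{j\in J^i}(c^j-\lambda_1)\xi(B^j)=\sum_{j\in J^i}(c^j-\lambda_1)(\xi(B^j)-\xi(B^{j_0})).$$
Averaging over $j_0\in J^i$, this is bounded by $\max_j|c^j-\lambda_1|\sum_{j,j'\in J^i}|\xi(B^{j'})-\xi(B^j)|$, which is at most $\sqrt2L_K$ times the double sum.

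\textbf{Main obstacle.} The only delicate point is the disjointness of the $B^j$ for $r<1/2$, together with matching the constants exactly. The subspaces $W^j$ need not be orthogonal in the standard norm, so the projective distance between $\mathbb{P}(W^j)$ and $\mathbb{P}(W^{j'})$ could be small. We would first try to choose the basis from Definition~\ref{def:decomp}, so that the decomposition is orthogonal; then the distance between distinct $\mathbb{P}(W^j)$ equals $1$ and the $r$-neighbourhoods are disjoint for $r<1/2$. This is legitimate because a change of norm alters $L_K$ only by a constant depending on $K$ and $\mu$. If this fails, the fallback is a measurable disjointification of the $B^j$, which leaves the argument above unchanged.
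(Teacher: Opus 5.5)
Your argument is correct and is essentially the paper's proof. It uses the same three-way splitting into the oscillation term on the $B^j$, the term on $B_r^c$, and the weighted sum $\sum_j c^j\xi(B^j)$. The orbit cancellation $\sum_{j\in J^i}(c^j-\lambda_1(\mu))=0$ from Lemma~\ref{lemma:const} is used, as in the paper, to insert a reference index, and your bounds $\log L_K\le\sqrt{2}\,L_K$ give exactly the stated constant.

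The paper asserts the disjointness you flag without comment. It is justified precisely by your primary route: in the adapted basis of Definition~\ref{def:decomp} the $W^j$ are mutually orthogonal, which Lemma~\ref{lemma:const} already needs for $\|q^ja^jv^j\|=a^j\|v^j\|$. Your fallback of disjointifying would not work as a substitute, because it replaces the sets $B^j$ in the conclusion and so does not prove the inequality as stated.
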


\begin{proof}
The neighborhoods $B^{j}$ are pairwise disjoint for $r\in\left( 0, \frac{1}{2} \right)$. By splitting the domain of integration,
\begin{align*}
\int_X \phi_\mu \, d\xi &= \sum_{i \in I} \sum_{j \in J^{i}} \int _{B^{j}} \phi_\mu \, d\xi + \int _{B^{c}_{r}} \phi_\mu \, d\xi \\
&= \sum_{i \in I} \sum_{j \in J^{i}} \int _{B^{j}} \left( \phi_\mu - \phi_\mu|_{\mathbb{P}(W^{j})} \right) \, d\xi + \int _{B_{r}^{c}} \phi_\mu \, d\xi + \sum_{i\in I} \sum_{j \in J^{i}} \phi_\mu|_{\mathbb{P}(W^{j})} \cdot \xi(B^{j}) .
\end{align*}
For the third term, let $c^j = \phi_\mu|_{\mathbb{P}(W^j)} - \lambda_1(\mu)$ and let $p^i = \sum_{j \in J^i} \xi(B^j)$ be the total mass of $\xi$ on the $r$-neighborhood of the $i$-th orbit $\cup_{j\in J^i} \mathbb{P}(W^j)$. By Lemma \ref{lemma:const}, we have $\sum_{j \in J^i} c^j = 0$ for each orbit $i \in I$. Fixing a reference index $j^i \in J^i$ for each orbit, we rewrite the sum over $J^i$ using the cancellation $\sum_{j \in J^i} c^j = 0$:
$$ 
\sum_{j \in J^i} c^j \left( \xi(B^j) - \frac{p^i}{|J^i|} \right) = \sum_{j \in J^i} c^j \left( \xi(B^j) - \xi(B^{j^i}) \right), 
$$
where we used $\sum_{j \in J^i} c^j \xi(B^{j^i}) = \xi(B^{j^i}) \sum_{j \in J^i} c^j = 0$. This gives:
$$ 
\left| \sum_{j \in J^i} c^j \left( \xi(B^j) - \frac{p^i}{|J^i|} \right) \right| \leq \max_{j \in J^i} |c^j| \sum_{j \in J^i} \left| \xi(B^j) - \xi(B^{j^i}) \right|. 
$$
We therefore decompose the total difference as:
\begin{align*}
\int_X \phi_\mu \, d\xi - \lambda_{1}(\mu) &= \sum_{i \in I, j \in J^{i}} \int _{B^{j}} \left( \phi_\mu - \phi_\mu|_{\mathbb{P}(W^{j})} \right) \, d\xi + \int_{B^{c}_{r}} (\phi_\mu - \lambda_{1}(\mu)) \, d\xi \\
&+ \sum_{i \in I} \sum_{j \in J^{i}} c^j \cdot \left( \xi(B^{j}) - \frac{p^{i}}{\left| J^{i} \right|} \right) .
\end{align*}
We bound these three terms in turn. The first term is controlled by $r \cdot \mathrm{Lip}(\phi_\mu) \leq r\sqrt{2} L_K$. The second term is bounded by $\xi(B^{c}_{r}) \cdot \|\phi_\mu - \lambda_{1}(\mu)\|_{\mathscr{C}(X)}$, where $\|\phi_\mu - \lambda_{1}(\mu)\|_{\mathscr{C}(X)} \leq \sup_{g\in K} \log N(g)^2 \leq \log L_K$. For the third term, $\max_j|c^j|$ is bounded by $\mathrm{Lip}(\phi_\mu) \leq \sqrt{2}L_K$. The lemma follows.
\end{proof}

% ----------------------------------------------------------------------
\section{Random walk estimates}\label{sec:rw}

By Lemma~\ref{lemma:estimate2}, the convergence of $\int_X \phi_\mu\,d\nu_n$ to $\lambda_1(\mu)$ is controlled by two quantities: the mass $\nu_n(B_r^c)$ that the measure places outside the $r$-neighborhoods of the subspaces $\mathbb{P}(W^j)$ (the accumulation term), and the imbalance $|\nu_n(B^j) - \nu_n(B^{j'})|$ between neighboring neighborhoods in the same $F_\mu$-orbit (the equidistribution term). In this section we derive polynomial bounds for both, using a Berry--Esseen estimate for the logarithmic slopes of the conformal blocks.

We work with the coarsest virtually $\Gamma_\mu$-conformal decomposition $\mathbb{R}^d = \oplus_{j\in J} W^j$ from Proposition~\ref{prop:struc_assump} and fix a basis accordingly. The maps $a(g)$ and $q(g)$ are well-defined with respect to this basis (Definition~\ref{def:decomp}). We write $\mathbb{P}=\mu^{\mathbb{N}^*}$, $\mathbb{E} = \int \cdot\, d\mathbb{P}$, $a_n = a(g_n)$, and $q_n = q(g_n)$. Recall from Definition~\ref{def:decomp} that the norms on each $W^j$ may be adjusted by a positive scalar.

\subsection{Berry-Esseen bounds for the slopes}

We utilize Schneider's theorem for exponentially $\varphi$-mixing sequences to control the random walk behavior of the logarithmic slopes. Let $\Psi(x) = \int_{-\infty}^x \frac{1}{\sqrt{2\pi}} e^{-z^2/2} dz$ denote the standard Gaussian cumulative distribution function.

\begin{thm}[Schneider~\cite{Sch81}]\label{lemma:sch0}
Let $(Z_{m})_{m \in \mathbb{N}}$ be a sequence of random variables satisfying
\begin{enumerate}[label=(\roman*)]
    \item $\mathbb{E}[Z_{m}]=0$ for every $m \geq 0$ and $\mathrm{sup}_{m \geq 0} \mathbb{E}\left[ \left| Z_{m} \right|^{3} \right] < \infty$,
    \item $\liminf_{ n \to \infty } \frac{w_{n}^{2}}{n}>0$ where $w^{2}_{n} =\mathbb{E}\left[ \left| \sum_{m=1}^{n} Z_{m} \right|^{2} \right]$, and
    \item there exist constants $C_{\text{mix}}>0$ and $c>0$ such that
\end{enumerate}
$$
\left| \mathbb{P}[B\mid A] - \mathbb{P}[B] \right| \leq C_{\text{mix }} e^{-c \jmath},
$$
for any $A \in \sigma(Z_{1}, \dots, Z_{l})$, $B \in \sigma(Z_{l+\jmath+1},\dots, Z_{l+\jmath+l'})$ and any $\jmath,l,l' \in \mathbb{N}$.

Then for any $t \in \mathbb{R}$ and $n \geq 2$,
$$
\left| \mathbb{P}\left[\frac{1}{ w_{n}} \sum_{m=1}^{n}Z_{m} \leq t \right] - \Psi(t) \right| \leq C_{\text{mix }} \frac{\log n}{\sqrt{ n }}.
$$
Consequently, for any $c_{1}<c_{2}$ and any $n \ge 2$,
$$
\mathbb{P}\left[ c_{1} \leq \frac{1}{w_{n}} \sum_{m=1}^{n} Z_{m} \leq c_{2} \right] \leq C_{\text{mix }} \left( \frac{\log n}{\sqrt{ n }} + {c_{2}-c_{1}} \right)
$$
\end{thm}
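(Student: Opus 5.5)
The first inequality is a Berry--Esseen bound for exponentially $\varphi$-mixing sequences. I would not reprove it from scratch; I would quote Schneider~\cite{Sch81}, which rests on Tikhomirov's characteristic-function method, and check that hypotheses (i)--(iii) are exactly his. Here is an outline of why the rate $\log n/\sqrt n$ appears, in case a self-contained argument is wanted.

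Write $S_n=\sum_{m\le n}Z_m$ and $f_n(s)=\mathbb{E}\,e^{isS_n/w_n}$. Esseen's smoothing inequality reduces the Kolmogorov distance to $\Psi$ to bounding $\int_{|s|\le T}|f_n(s)-e^{-s^2/2}|\,|s|^{-1}ds$, plus a term $O(1/T)$, with $T\asymp\sqrt n$.

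To estimate $f_n$, split $\{1,\dots,n\}$ into long blocks separated by gaps of length $k\asymp \log n$. Hypothesis (iii) then lets us replace the characteristic function of the whole sum by a product over nearly independent blocks. The error per decoupling is $C_{\text{mix}}e^{-ck}$, which is $\le n^{-1}$ once $k=\lceil 2c^{-1}\log n\rceil$.

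Tikhomirov's refinement differentiates $f_n$ in $s$. Using (i) and the mixing, it shows $f_n'(s)=-s f_n(s)+R_n(s)$ with $|R_n(s)|\lesssim (|s|+s^2)\,k/\sqrt n$. The third-moment bound in (i) controls the Taylor remainders. Hypothesis (ii) ensures $w_n\gtrsim\sqrt n$, so the normalisation does not degenerate.

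Integrating this differential inequality gives $|f_n(s)-e^{-s^2/2}|\lesssim (k/\sqrt n)\,(|s|+|s|^3)e^{-s^2/4}$ on $|s|\le \varepsilon\sqrt n/k$. The remaining range of $s$ contributes $O(k/\sqrt n)$ to the smoothing integral. Altogether the error is of order $k/\sqrt n\asymp \log n/\sqrt n$.

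The main obstacle is this differential-inequality step. There one must control, uniformly in $s$, the covariance between $Z_m$ and $e^{isS_n}$ restricted away from a window of size $k$ around $m$; this is exactly where the $\log n$ loss comes from. After absorbing all constants depending on $c$, the moment bound and $\liminf w_n^2/n$, I would rename the resulting constant $C_{\text{mix}}$, enlarged so that $C_{\text{mix}}\ge 1$.

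The consequence is elementary. Fix $c_1<c_2$ and let $F_n(t)=\mathbb{P}[S_n/w_n\le t]$. Taking $t\uparrow c_1$ in the first bound, and using continuity of $\Psi$, gives $|\mathbb{P}[S_n/w_n<c_1]-\Psi(c_1)|\le C_{\text{mix}}\log n/\sqrt n$. Therefore
$$
\mathbb{P}\left[c_1\le \tfrac{1}{w_n}S_n\le c_2\right]=F_n(c_2)-\mathbb{P}\left[\tfrac{1}{w_n}S_n<c_1\right]\le \Psi(c_2)-\Psi(c_1)+2C_{\text{mix}}\frac{\log n}{\sqrt n}.
$$
Since the Gaussian density is bounded by $1/\sqrt{2\pi}<1$, we have $\Psi(c_2)-\Psi(c_1)\le c_2-c_1$. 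After replacing $C_{\text{mix}}$ by $2C_{\text{mix}}$ (harmless, since both displays only assert the existence of such a constant), this yields the stated inequality $C_{\text{mix}}\bigl(\log n/\sqrt n+c_2-c_1\bigr)$.
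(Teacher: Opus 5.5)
Your proposal takes the same route as the paper. The paper gives no argument for this statement: it quotes Schneider~\cite{Sch81} and asserts the interval bound with a bare ``Consequently''. Your derivation of that interval bound is correct and more careful than the paper's: you take the left limit at $c_1$, use $\Psi(c_2)-\Psi(c_1)\le (c_2-c_1)/\sqrt{2\pi}$, and absorb the factor $2$ into an enlarged constant, which hypothesis (iii) allows.

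The Tikhomirov-style outline is not needed, and it does not hold up as written. An additive remainder $|R_n(s)|\lesssim(|s|+s^2)k/\sqrt n$ only integrates to $|f_n(s)-e^{-s^2/2}|\lesssim (k/\sqrt n)(1+|s|)$, with no Gaussian damping. Also, the range $|s|>\varepsilon\sqrt n/k$ is not controlled unless you take $T\asymp\sqrt n/k$. So keep the outline as heuristic motivation only.
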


The exact zero-mean requirement in condition (i) of Theorem \ref{lemma:sch0} is inconvenient for sequences arising in our setting. While it does not strictly relax the assumptions due to the necessary martingale difference condition, we adapt the theorem for convenience to handle sequences whose expectations decay exponentially:

\begin{thm}\label{lemma:sch}
Let $(Z_{m})_{m \in \mathbb{N}}$ be a sequence of random variables satisfying
\begin{enumerate}
    \item[(i')] $\left|\mathbb{E} Z_{m} \right| \leq C_{0}\rho ^{m}$ for some constants $C_{0}>0$ and $\rho \in (0,1)$, for every $m \geq 0$, with $\mathrm{sup}_{m \in \mathbb{N}} \left| Z_{m} \right| < \infty$,
    \item[(ii')] $(Z_{m}-\mathbb{E}Z_{m})_{m\in \mathbb{N}}$ is a martingale difference sequence and $\liminf_{ n \to \infty } \frac{w_{n}^{2}}{n}>0$ where $w^{2}_{n} =\sum_{m=1}^{n} \mathbb{E}\left[Z_{m}^{2} \right]$, and
    \item[(iii)] there exist constants $C_{\text{mix }}>0$ and $c>0$ such that $\left| \mathbb{P}[B\mid A] - \mathbb{P}[B] \right| \leq C_{\text{mix }} e^{-c \jmath}$,
\end{enumerate}
for any $A \in \sigma(Z_{1}, \dots, Z_{l})$, $B \in \sigma(Z_{l+\jmath+1},\dots, Z_{l+\jmath+l'})$ and any $\jmath,l,l' \in \mathbb{N}$.

Then for any $t \in \mathbb{R}$ and $n \geq 2$, there exists a constant $C> 0$ such that
$$
\left| \mathbb{P}\left[\frac{1}{ w_{n}} \sum_{m=1}^{n}Z_{m} \leq t \right] - \Psi(t) \right| \leq C \frac{\log n}{\sqrt{ n }}.
$$
Consequently, for any $c_{1}<c_{2}$ and any $n \ge 2$,
$$
\mathbb{P}\left[ c_{1} \leq \frac{1}{w_{n}} \sum_{m=1}^{n} Z_{m} \leq c_{2} \right] \leq C \left( \frac{\log n}{\sqrt{ n }} + {c_{2}-c_{1}} \right)
$$
\end{thm}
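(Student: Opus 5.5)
The plan is to reduce Theorem~\ref{lemma:sch} to Schneider's Theorem~\ref{lemma:sch0} by centering. Set $Y_m = Z_m - \mathbb{E}Z_m$ and compare the two sums.

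\emph{Step 1: check the hypotheses for $(Y_m)$.} By construction $\mathbb{E}Y_m=0$. Since $\sup_m|Z_m|<\infty$, also $\sup_m|Y_m|\le 2\sup_m|Z_m|<\infty$, so the third moments are uniformly bounded. This gives condition (i).

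Each $Y_m$ is a deterministic shift of $Z_m$, so $\sigma(Y_1,\dots,Y_l)=\sigma(Z_1,\dots,Z_l)$. The mixing condition (iii) therefore transfers verbatim.

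For condition (ii), let $\tilde w_n^2=\mathbb{E}\big|\sum_{m\le n}Y_m\big|^2$. The martingale difference property of $(Y_m)$ kills the cross terms, so $\tilde w_n^2=\sum_{m\le n}\mathbb{E}Y_m^2=\sum_{m\le n}\big(\mathbb{E}Z_m^2-(\mathbb{E}Z_m)^2\big)$. Using $\sum_m(\mathbb{E}Z_m)^2\le C_0^2/(1-\rho^2)$, this gives
\begin{equation*}
|\tilde w_n^2-w_n^2|\le C_0^2/(1-\rho^2)=:D.
\end{equation*}
Hence $\liminf \tilde w_n^2/n=\liminf w_n^2/n>0$. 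Theorem~\ref{lemma:sch0} then yields
\begin{equation*}
\Big|\mathbb{P}\big[\textstyle\sum Y_m\le s\,\tilde w_n\big]-\Psi(s)\Big|\le C_{\text{mix}}\log n/\sqrt n
\end{equation*}
for all $s\in\mathbb{R}$.

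\emph{Step 2: transfer back to $(Z_m)$.} Write $\sum_{m\le n}Z_m=\sum_{m\le n}Y_m+M_n$, where $|M_n|\le C_0/(1-\rho)=:M$ is deterministic. Then
\begin{equation*}
\mathbb{P}\Big[\tfrac{1}{w_n}\textstyle\sum Z_m\le t\Big]=\mathbb{P}\Big[\textstyle\sum Y_m\le s_n\tilde w_n\Big],\qquad s_n=(tw_n-M_n)/\tilde w_n.
\end{equation*}
The probability is therefore within $C_{\text{mix}}\log n/\sqrt n$ of $\Psi(s_n)$, and it remains to bound $|\Psi(s_n)-\Psi(t)|$.

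Decompose
\begin{equation*}
|s_n-t|\le |t|\,\big|w_n/\tilde w_n-1\big|+M/\tilde w_n.
\end{equation*}
The second piece is $O(n^{-1/2})$ because $\tilde w_n\gtrsim\sqrt n$, and since $\Psi$ is $(2\pi)^{-1/2}$-Lipschitz it contributes $O(n^{-1/2})$. The first piece needs care because it is not uniform in $t$. We have $|w_n/\tilde w_n-1|\le |w_n^2-\tilde w_n^2|/\tilde w_n^2\le D'/n$.

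The main obstacle is this non-uniformity in $t$, which I would handle by splitting on the size of $|t|$.
\begin{itemize}
\item For $|t|\le\sqrt n$, the first piece is $O(n^{-1/2})$, and the Lipschitz bound on $\Psi$ suffices.
\item For $|t|>\sqrt n$, use tails directly. Both $\Psi(t)$ and $\Psi(s_n)$ lie within $e^{-cn}$ of $0$ or $1$ with the same sign, because $s_n/t\to1$ uniformly. So $|\Psi(s_n)-\Psi(t)|$ is exponentially small.
\end{itemize}
Altogether this gives the bound $C\log n/\sqrt n$ with $C$ uniform in $t$. Finitely many small $n$ are absorbed into $C$, since all probabilities are at most $1$.

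\emph{Step 3: the interval bound.} This follows as in Theorem~\ref{lemma:sch0}. Take the difference of the two CDF estimates at $c_2$ and at $c_1^-$; the latter is handled by a limit. Then add $\Psi(c_2)-\Psi(c_1)\le (c_2-c_1)/\sqrt{2\pi}$.
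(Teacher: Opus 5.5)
Your proposal is correct and follows essentially the same route as the paper: center to $Z_m-\mathbb{E}Z_m$, verify Schneider's hypotheses (the martingale property makes the variance differ from $w_n^2$ by the summable $\sum_m(\mathbb{E}Z_m)^2$), and transfer back through the Lipschitz bound on $\Psi$. The differences are minor. For uniformity in $t$, the paper uses the mean value theorem: since $\kappa_n=u_n/w_n\le 1$ pushes $t/\kappa_n$ outward, the Gaussian density at the intermediate point is at most $e^{-t^2/2}/\sqrt{2\pi}$, and $|t|e^{-t^2/2}\le 1$; you instead split at $|t|=\sqrt n$ and use Gaussian tails. Your absorption of small $n$ into $C$ also covers all $n\ge 2$, whereas the paper's proof only concludes for $n$ large enough.
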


\begin{proof}
We apply Theorem \ref{lemma:sch0} to the centered sequence $\tilde{Z}_m = Z_{m}-\mathbb{E}Z_{m}$. Condition (i) is satisfied because the variables $\tilde{Z}_m$ are explicitly zero-mean, and the uniform bound on $|Z_m|$ implies their third absolute moments are uniformly bounded. For condition (ii), since $(\tilde{Z}_m)_{m \in \mathbb{N}}$ is a martingale difference sequence, the sum of variances matches the variance of the sum:
$$
u^{2}_{n} := \mathbb{E} \left[  \left| \sum_{m=1}^{n} \tilde{Z}_m \right|^{2} \right] = \sum_{m=1}^{n} \mathbb{E}\left[  | \tilde{Z}_m |^{2}  \right] = w^{2}_{n} - \sum_{m=1}^{n} (\mathbb{E}Z_{m})^{2} .
$$
Because condition (i') enforces that expectations decay exponentially, the infinite series $\sum_{m=1}^{\infty} (\mathbb{E}Z_{m})^{2}$ converges to a finite constant. Since $\liminf \frac{w_n^2}{n} > 0$, the sum $w_n^2$ grows linearly without bound. Therefore, the asymptotic lower bounds are identically non-zero:
$$
\liminf_{ n \to \infty } \frac{u^{2}_{n}}{n} = \liminf_{ n \to \infty } \frac{w^{2}_{n}}{n} > 0.
$$
Condition (iii) on exponential $\varphi$-mixing is unaffected by the centering, since deterministic shifts do not alter the $\sigma$-algebras generated by the sequence. Thus, Theorem \ref{lemma:sch0} applies directly to the sum $Y_n = \frac{1}{u_n} \sum_{m=1}^n \tilde{Z}_m$, yielding a Berry-Esseen bound with the mixing constant $C_{\mathrm{mix}}>0$ such that for all $x \in \mathbb{R}$ and $n \ge 2$:
$$
\left| \mathbb{P}(Y_n \leq x) - \Psi(x) \right| \leq C_{\mathrm{mix}} \frac{\log n}{\sqrt{n}}.
$$

To transfer this bound back to the original uncentered sum, define the scaling factor $\kappa_n = \frac{u_n}{w_n} \in (0,1]$ and the shift $\beta_n = \frac{1}{w_n} \sum_{m=1}^n \mathbb{E}Z_m$. The normalized original sum is then given exactly by $\frac{1}{w_n} \sum_{m=1}^n Z_m = \kappa_n Y_n + \beta_n$. We evaluate the target probability and bound the deviation using the triangle inequality:
\begin{align*}
\left| \mathbb{P}\!\left(Y_n \leq \tfrac{t-\beta_n}{\kappa_n}\right) - \Psi(t) \right|
&\leq \left| \mathbb{P}\!\left(Y_n \leq \tfrac{t-\beta_n}{\kappa_n}\right) - \Psi\!\left(\tfrac{t-\beta_n}{\kappa_n}\right) \right|
+ \left| \Psi\!\left(\tfrac{t-\beta_n}{\kappa_n}\right) - \Psi(t) \right|.
\end{align*}
The first term is bounded by $C_{\mathrm{mix}} \frac{\log n}{\sqrt{n}}$. 
For the second term, we have $\left| \Psi\!\left(\tfrac{t-\beta_n}{\kappa_n}\right) - \Psi\left(\tfrac{t}{\kappa_n}\right) \right| \leq \frac{|\beta_n|}{\sqrt{2\pi}\kappa_n}$ and:
$$
\left| \Psi(\tfrac{t}{\kappa_n}) - \Psi(t) \right| \leq \frac{1}{\sqrt{2\pi}} \left| \tfrac{t}{\kappa_n} - t \right| e^{-t^2/2} \leq 
\frac{1-\kappa_n}{\sqrt{2\pi}\kappa_n}.
$$

By condition (ii'), $w_n^2 \ge c_w n$ for some uniform constant $c_w > 0$ and all $n \ge 1$. From $w_n^2 \ge c_w n$, we get the bound on the scaling ratio:
$$
1 \ge \kappa_n = \left( 1 - \frac{1}{w^{2}_{n}} \sum_{m=1}^{n} (\mathbb{E}Z_{m})^{2} \right)^{1/2} \geq 1 - \frac{1}{c_w n} \frac{C_{0}^{2}}{1-\rho ^{2}}.
$$
This establishes $1 \ge \kappa_n \ge 1 - \frac{C_2}{n}$ for a constant $C_2=C_2(C_0,\rho,c_w) > 0$. Similarly, the shift satisfies:
$$
|\beta_n| \leq \frac{1}{\sqrt{c_w n}} \frac{C_{0}}{1-\rho} \leq \frac{C_3}{\sqrt{n}}.
$$
For $n \ge 2$ large enough that $\kappa_n \ge \frac{1}{2}$, we obtain:
$$
\left| \Psi(t_n) - \Psi(t) \right| \leq \frac{2}{\sqrt{2\pi}} \left( \frac{C_2}{n} + \frac{C_3}{\sqrt{n}} \right) \leq \frac{C_4}{\sqrt{n}}.
$$
Combining both bounds gives the claimed inequality with $C = C_{\mathrm{mix}} + C_4$ for all $n$ large enough.
\end{proof}

We now define the random variables to which Theorem~\ref{lemma:sch} will be applied. 
Recall that $a_m=a(g_m)$ and $J$ indexes the components of a coarsest virtually $\Gamma_\mu$-conformal decomposition of $\mathbb{R}^d$. 
For $j, j' \in J$, define the \emph{logarithmic slope} at step $m$ by
$$
\alpha _{m}^{j',j} = \log \frac{a^{\zeta _{m-1}(j')}_{m}}{a^{\zeta _{m-1}(j)}_{m}},
$$
where $\zeta _{m} = \pi _{m}\dots \pi_{1} \in S_{J}$ and $\pi_m$ is defined by $g_{m}W^{j}=W^{\pi _{m}(j)}$. To verify the hypotheses of Theorem~\ref{lemma:sch} for these slopes, we establish in turn: exponential decay of the mean (Lemma~\ref{lemma:mean}), centering within each $F_\mu$-orbit (Lemma~\ref{lemma:centering}), non-degeneracy of the variance (Lemma~\ref{lemma:variance}), and the mixing condition (Lemma~\ref{lemma:mix}). 

\begin{lemma} \label{lemma:mean}
Let $\mu$ be a compactly supported semisimple probability measure on $G$ with $\lambda_1(\mu) = \lambda_d(\mu)$, and let $\oplus_{j\in J}W^j$ be a coarsest virtually $\Gamma_\mu$-conformal decomposition. Assume $\mu$ is aperiodic in $F_\mu$. Then there exist constants $C_0>0$ and $\rho\in (0,1)$, such that for any $j$ and $j'$ in different $F_{\mu}$-orbits,
$$
\left| \mathbb{E}[\alpha _{n}^{j',j}] \right| \leq C_0 \rho ^{n-1}.
$$
\end{lemma}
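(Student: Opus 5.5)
We condition on the permutation part. Write $\alpha_n^{j',j} = \log a_n^{\zeta_{n-1}(j')} - \log a_n^{\zeta_{n-1}(j)}$, and set
\[
f^{k} = \int_G \log a^{k}(g)\,d\mu(g) = \phi_\mu|_{\mathbb{P}(W^{k})},
\]
using the identity from the proof of Lemma~\ref{lemma:const}. The matrix $g_n$ is independent of $\zeta_{n-1}=\pi_{n-1}\cdots\pi_1$. Conditioning on $\zeta_{n-1}$ therefore gives
\[
\mathbb{E}[\alpha_n^{j',j}] = \sum_{s\in F_\mu} \mu^{*(n-1)}(\{s\})\,\bigl(f^{s(j')} - f^{s(j)}\bigr),
\]
where $\mu^{*(n-1)}$ denotes the law of $\zeta_{n-1}$ in $F_\mu$, that is, the pushforward convolution power.

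Next we compare with the uniform measure. By aperiodicity and Lemma~\ref{lemma:markov}, $|\mu^{*(n-1)}(\{s\}) - 1/|F_\mu|| \le C\rho^{n-1}$. Replacing $\mu^{*(n-1)}$ by the uniform measure costs at most
\[
|F_\mu|\cdot C\rho^{n-1}\cdot 2\max_k |f^k|,
\]
and $\max_k|f^k| \le \log L_K$ since $K$ is compact. It then suffices to show that the uniform average $\frac{1}{|F_\mu|}\sum_{s\in F_\mu} (f^{s(j')} - f^{s(j)})$ vanishes.

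The group $F_\mu$ acts transitively on the orbit $J^{i}$ containing $j$, so $s\mapsto s(j)$ pushes the uniform measure on $F_\mu$ to the uniform measure on $J^{i}$. Hence $\frac{1}{|F_\mu|}\sum_s f^{s(j)} = \frac{1}{|J^{i}|}\sum_{k\in J^{i}} f^k$, and by Lemma~\ref{lemma:const} this equals $\lambda_1(\mu)$. The same computation for $j'$ in its own orbit $J^{i'}$ also gives $\lambda_1(\mu)$, so the difference is zero. This yields $|\mathbb{E}\alpha_n^{j',j}|\le C_0\rho^{n-1}$ with $C_0 = 2C|F_\mu|\log L_K$.

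The only delicate point is the use of Lemma~\ref{lemma:const} on each orbit separately. Its conclusion, that the orbit-averages all equal $\lambda_1(\mu)$, is exactly what makes pairs in different orbits work. For $j,j'$ in the same orbit the argument also gives the bound, and in fact the same vanishing. We also need that aperiodicity in $F_\mu$ is in force for $\mu$ itself, as the statement assumes; in the main proof this is arranged by passing to $\mu^{*p}$ via Lemma~\ref{lemma:aper}.
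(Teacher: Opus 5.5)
Your proposal is correct and follows essentially the same route as the paper: condition on $\zeta_{n-1}$ (independent of $g_n$), compare its law with the uniform measure on $F_\mu$ via Lemma~\ref{lemma:markov}, and use Lemma~\ref{lemma:const} to show that the uniform average of the slope drift vanishes. Your explicit justification of that last step fills in a detail the paper only asserts: the orbit map $s\mapsto s(j)$ pushes the uniform measure on $F_\mu$ to the uniform measure on $J^i$, so both orbit averages equal $\lambda_1(\mu)$.
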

\begin{proof}
By expanding the expectation conditionally on the finite group permutations, we have:
$$
\mathbb{E}[ \alpha _{n}^{j',j} ] = \sum_{s \in F _{\mu}} \mathbb{E} \left[\log{\frac{a^{s(j')}_{n}}{a^{s(j)}_{n}}} \right] \mathbb{P}(\zeta _{n-1}=s).
$$
Let $\zeta _{\infty}$ be a uniformly distributed random variable on $F_\mu$. By Lemma \ref{lemma:const}, the steady-state drift perfectly cancels because $\lambda_1(\mu)$ evaluates identically on all $F_\mu$-orbits:
$$
0 = \sum_{s \in  F _{\mu}} \mathbb{E} \left[  \log{ \frac{a^{s(j')}_{n}}{a^{s(j)}_{n}}} \right] \mathbb{P}(\zeta _{\infty} = s).
$$
Subtracting these two equalities gives us the bounded difference:
$$
\left| \mathbb{E}[\alpha _{n}^{j',j}] \right| \leq \sum_{s \in F _{\mu}} \left| \mathbb{E} \left[  \log{\frac{a^{s(j')}_{n}}{a^{s(j)}_{n}}} \right] \right| \left| \mathbb{P}(\zeta _{n-1} = s) - \mathbb{P}(\zeta _{\infty} = s) \right|.
$$
By applying the exponential convergence of the aperiodic Markov chain $(\zeta_n)_n$ on the finite group $F_\mu$ (Lemma \ref{lemma:markov}), we bound the variation distance strictly by $\rho ^{n-1}$. We bound the expected scaling factors using compactness:
$$
\left| \mathbb{E}\left[ \log{\frac{a^{s(j')}_{n}}{a^{s(j)}_{n}}} \right] \right| \leq \mathbb{E} \left[ \log{\max\left(  \frac{a^{s(j')}_{n}}{a^{s(j)}_{n}}, \frac{a^{s(j)}_{n}}{a^{s(j')}_{n}} \right)} \right] \leq \mathbb{E}\left[ \log{ N(g_n)^2} \right].
$$
Since $\log N(g)^2 \le \log L_K$ and $|F_\mu| \le d!$, we obtain
$$
\left| \mathbb{E}[\alpha _{n}^{j',j}] \right| \leq (d!)(\log L_K) \rho^{n-1}.
$$
Setting $C_0 = (d!)(\log L_K)$ completes the proof.
\end{proof}

When $j$ and $j'$ lie in the same $F_\mu$-orbit, the expected value of $\alpha_m^{j',j}$ need not be small, but it can be made exactly zero by a suitable rescaling of the conformal norms.

\begin{lemma} \label{lemma:centering}
Let $\mu$ be a compactly supported semisimple probability measure on $G$ with $\lambda_1(\mu) = \lambda_d(\mu)$, and let $\oplus_{j\in J}W^j$ be a coarsest virtually $\Gamma_\mu$-conformal decomposition. After a suitable change of the basis conforming to the conformal structures $[\mathcal{C}^j]$ on $W^j$, we have:
$$
\int_G \log \frac{a^{j}(g)}{a^{j'}(g)} \, d \mu(g) = 0,
$$
for any $j, j' \in J$ in the same $F_\mu$-orbit.
\end{lemma}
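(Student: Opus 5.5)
The plan is to exploit the freedom noted at the end of Definition~\ref{def:decomp}: we may rescale each block by a change of coordinates $T=\oplus_{j\in J} t^j\,\mathrm{Id}_{W^j}$ with $t^j>0$. This preserves the decomposition, the conformal structures $[\mathcal{C}^j]$, and the block form $g=qa$. We then choose the $t^j$ so that the mean logarithmic scale factors become equal inside each $F_\mu$-orbit. Writing $u^j=\log t^j$, conjugation by $T$ replaces the scalar $a^j(g)$ of $g$ on $W^j$ (which maps $W^j$ onto $W^{\pi_g(j)}$) by $a^j(g)\,t^{\pi_g(j)}/t^j$. Set $b^j=\int_G \log a^j(g)\,d\mu(g)$, the value of $\phi_\mu$ on $\mathbb{P}(W^j)$ from the proof of Lemma~\ref{lemma:const}. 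After the change of basis the new means are
$$\tilde b^j = b^j + \sum_{k\in J}P_{jk}\,u^k - u^j,\qquad P_{jk}=\mu\{g:\ gW^j=W^k\}.$$
The statement is then equivalent to finding $u$ with $\tilde b^j=\lambda_1(\mu)$ for every $j$. In other words, on each orbit $J^i$ we must solve the Poisson equation $(I-P)u = b-\lambda_1(\mu)\mathbf{1}$.

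The steps are as follows. First, $P$ preserves each orbit $J^i$, since $\Gamma_\mu$ maps $W^j$ only to components in the same orbit. Hence the equation decouples, and we may solve it separately on each $J^i$, restricting $P$ to a stochastic matrix $P^i$ on $J^i$. Second, $P^i$ is irreducible, because $\mathrm{supp}(\mu)$ generates $\Gamma_\mu$ and $F_\mu$ acts transitively on $J^i$; every $k\in J^i$ is therefore reached from $j$ with positive probability in finitely many steps. Third, $P^i$ is doubly stochastic, because each $g$ acts on $J^i$ by a permutation and so $\sum_j P_{jk}=\int \#\{j:\pi_g(j)=k\}\,d\mu=1$. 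The uniform vector is thus a left fixed vector, and by irreducibility and Perron--Frobenius it spans the kernel of $I-(P^i)^{T}$. Fourth, the Fredholm alternative in finite dimensions shows that $(I-P^i)u=f$ is solvable exactly when $f$ is orthogonal to this kernel, i.e.\ when $\sum_{j\in J^i} f^j=0$. With $f^j=b^j-\lambda_1(\mu)$, this is exactly the averaging identity of Lemma~\ref{lemma:const}. Fifth, we take such a solution $u$ on each orbit and set $t^j=e^{u^j}$. Then $\int_G\log a^j\,d\mu=\lambda_1(\mu)$ for all $j$, and subtracting gives the claim for any $j,j'$ in the same orbit.

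I expect the main obstacle to be the careful bookkeeping in the change-of-basis formula. One must check that, after rescaling by $T$, the block of $g$ from $W^j$ to $W^{\pi(j)}$ is still "orthogonal times scalar" for the rescaled conformal norms, with new scalar $a^j t^{\pi(j)}/t^j$, and that the $q^j$ stay orthogonal. This holds because $T$ acts by a scalar on each block, but the index conventions (whether $a^j$ is attached to the source $W^j$ or the target $W^{\pi(j)}$) must match those used in the definition of $\alpha_m^{j',j}$. A second point worth verifying is that Lemma~\ref{lemma:const} remains valid with the standard norm used in $\phi_\mu$. By the remark after Furstenberg's formula, the change of norm only adds coboundaries, which is precisely the term $Pu-u$ above. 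So the identity $\sum_{j\in J^i} b^j = |J^i|\,\lambda_1(\mu)$ can be computed in the adapted norm, and the argument is consistent.
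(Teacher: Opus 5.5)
Your proposal is correct and follows essentially the same route as the paper: rescale the blocks by $T=\oplus_j t^j\,\mathrm{Id}_{W^j}$, reduce to a Poisson equation $(\mathrm{Id}-P)u=f$ for the irreducible doubly stochastic permutation walk on each orbit $J^i$, and solve it because the right-hand side has zero orbit-sum. The paper differs in two small ways. It takes the target constant to be the orbit average of the unscaled drifts, so solvability is automatic and Lemma~\ref{lemma:const} is not needed. You instead fix the constant at $\lambda_1(\mu)$ and invoke Lemma~\ref{lemma:const} for the zero-sum condition, which is equivalent since that lemma says the orbit average is $\lambda_1(\mu)$. Your source-indexed formula $a^j t^{\pi(j)}/t^j$ is also more consistent with Definition~\ref{def:decomp} than the paper's target-indexed bookkeeping.
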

\begin{proof}
To ensure the integrals vanish, we independently rescale the basis vectors of each subspace. Let $J^i \subset J$ be an $F_\mu$-orbit. Because the action of $\Gamma_\mu$ permutes the subspaces $W^j$ for $j \in J^i$ transitively, the permutations $s \in F_\mu$ act transitively on $J^i$.

For every $j \in J^i$, we apply a scaling transformation $T = \oplus_{j \in J^i} t^j \mathrm{Id}_{W^{j}}$ with strictly positive scalar factors $t^j > 0$. Under this change of coordinates, the action of an element $g \in \Gamma_\mu$ restricted to the component $W^j$ becomes:
$$
g|_{W^j} = T|_{W^{s(j)}} \cdot a^{s(j)}(g) q^s|_{W^j} \cdot T^{-1}|_{W^j} = \left( a^{s(j)}(g) \frac{t^{s(j)}}{t^j} \right) q^s|_{W^j}.
$$
In this newly rescaled basis, the block scalar factor becomes $\tilde{a}^j(g) = a^j(g) \frac{t^j}{t^{s^{-1}(j)}}$. Our algebraic goal is to choose the scalars $t^j$ such that the drift,
$$
C^j := \int_G \log \tilde{a}^j(g) \, d\mu(g),
$$
is strictly independent of $j$ for all $j \in J^i$. If $C^j \equiv C$ operates as a uniform constant, then for any arbitrary elements $j, j' \in J^i$, the difference of their identical integrals naturally vanishes.

Let $c^j = \int_G \log a^j(g) \, d\mu(g)$ represent the unscaled drift, and define $z^j = \log t^j$. The expected logarithm of the new scalar factor expands to:
$$
\int_G \log \tilde{a}^j(g) \, d\mu(g) = c^j + z^j - \sum_{s \in F_\mu} \mu(\{s\}) z^{s^{-1}(j)}.
$$
We rewrite the resulting summation using the transition probability matrix $P$ of the random walk mapped on the finite state space $J^i$, constructed fundamentally by $P_{j,k} = \mu(\{s \in F_\mu : s^{-1}(j) = k\})$. Because the generating set $F_\mu$ functions transitively, $P$ is an irreducible, doubly stochastic matrix. We thereby seek a column vector $z = (z^j)_{j \in J^i}$ and a uniform constant $C$ such that:
$$
c + z - Pz = C \mathbf{1},
$$
where $\mathbf{1}$ denotes the vector populated completely by ones. Rearranging this relation yields Poisson's equation evaluated for the Markov chain:
$$
(\mathrm{Id} - P)z = C \mathbf{1} - c.
$$
The operator $\mathrm{Id} - P$ is invertible on the subspace orthogonal to the stationary distribution. Since $P$ is doubly stochastic, its unique stationary distribution is the uniform measure. A solution $z$ therefore exists if and only if the right-hand side has zero mean:
$$
\sum_{j \in J^i} (c^j - C) = 0 \implies C = \frac{1}{|J^i|} \sum_{j \in J^i} c^j.
$$
With this specific parameter choice, the vector $C \mathbf{1} -c$ lies in the image of $\mathrm{Id} - P$. We take $z$ to be any solution of this linear system and set $t^j = \exp(z^j)$. Repeating this coordinate change for each orbit $J^i$ completes the proof.
\end{proof}

\begin{rmk}
The block-conformal change of coordinates $T = \oplus_{j\in J} t^j\mathrm{Id}_{W^j}$ in Lemma~\ref{lemma:centering} replaces the norms on each $W^j$ by equivalent ones. Since the Lyapunov exponents are independent of the choice of basis on $\mathbb{R}^d$ and the change of coordinates is Lipschitz with respect to the probability measures in the Wasserstein distance, the log-H\"{o}lder bound of Theorem~\ref{thmA} is unaffected by this change.
\end{rmk}

The non-degeneracy of the variance is the key quantitative input that prevents the random walk from being deterministic; it is what ensures the Berry--Esseen bound gives a useful decay rate.
\begin{lemma} \label{lemma:variance}
For a coarsest virtually $\Gamma_\mu$-conformal decomposition $\oplus_{j\in J} W^j$, the variance 
$$
w^{j',j}:= \frac{1}{|F_\mu|}\sum_{s\in F_\mu} \mathbb{E}\left[ \log^2 \frac{a^{s(j')}}{a^{s(j)}} \right]
$$
does not vanish for any distinct $j,j'\in J$. 
\end{lemma}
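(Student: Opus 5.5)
We argue by contradiction, using the coarseness of the decomposition. Suppose $w^{j',j}=0$ for some distinct $j,j'\in J$. Since the summands are nonnegative, this gives
$$
a^{s(j')}(g)=a^{s(j)}(g)
$$
for $\mu$-almost every $g$ and every $s\in F_\mu$. Because $a(g)$ depends continuously on $g\in\Gamma_\mu$, the equality extends to every $g\in\mathrm{supp}(\mu)$. The plan is to upgrade this single coincidence of scalars to an $F_\mu$-equivariant equivalence relation on $J$. Merging the equivalent blocks would then produce a strictly coarser virtually $\Gamma_\mu$-conformal decomposition, contradicting the choice of $\oplus_{j\in J}W^j$ in Proposition~\ref{prop:struc_assump}.

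\textbf{Step 1 (from generators to the semigroup).} First check how the scalar parts compose. For $g=g_n\cdots g_1$ with $g_m\in\mathrm{supp}(\mu)$, the factor of $g$ on $W^k$ is
$$
a^k(g)=\prod_{m} a^{\zeta_{m-1}(k)}(g_m),
$$
where $\zeta_m=\pi_m\cdots\pi_1$, and this uses exactly the conventions defining $\alpha^{j',j}_m$. The hypothesis gives $a^{s(j')}(h)=a^{s(j)}(h)$ for every $s\in F_\mu$ and every $h\in\mathrm{supp}(\mu)$, since the sum runs over all of $F_\mu$. Applying it factor by factor with $s=\zeta_{m-1}$ yields $a^{t(j')}(g)=a^{t(j)}(g)$ for all $g\in\Gamma_\mu$ and all $t\in F_\mu$. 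Let $\sim$ be the equivalence relation on $J$ generated by the pairs $(t(j),t(j'))$, $t\in F_\mu$. It is $F_\mu$-invariant and nontrivial, and equivalent indices carry equal scalars $a^k(g)=a^{k'}(g)$ for every $g\in\Gamma_\mu$. Here the orbit sum in the definition of $w^{j',j}$ matters: it makes the relation $F_\mu$-stable, so each $g$ maps equivalence classes onto equivalence classes.

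\textbf{Step 2 (building the coarser decomposition).} For each class $E$ set $U^E=\oplus_{k\in E}W^k$, and equip it with the orthogonal direct sum of the adapted conformal norms $\mathcal C^k$ for $k\in E$, normalized as in Definition~\ref{def:decomp}. We need to check that each $g\in\Gamma_\mu$ maps $U^E$ onto $U^{\pi(E)}$ as a similarity. The map $g$ sends each $W^k$ isometrically, up to the common scalar $a^k(g)$, onto $W^{\pi(k)}$. Distinct $k$ go to distinct orthogonal summands, and all the scalars coincide on $E$ by Step 1. So $g|_{U^E}$ is a scalar times an isometry onto $U^{\pi(E)}$, and $\{U^E\}$ is a virtually $\Gamma_\mu$-conformal decomposition. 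It has strictly fewer summands than the original, and the restriction of its norm to each $W^k$ is $\mathcal C^k$. Hence it is strictly coarser in the sense of Definition~\ref{def:decomp}, which is the desired contradiction.

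\textbf{Main obstacle.} The delicate point is the normalization of the norms. The partial order in Definition~\ref{def:decomp} requires $\mathcal D^E|_{W^k}=\mathcal C^k$ as norms, not merely as conformal classes. The scalars $a^k$ depend on which representative norm is chosen on each $W^k$, and the identity in Step 1 holds for the fixed basis in which $w^{j',j}$ is computed. I will therefore use precisely those representatives, possibly the ones rescaled in Lemma~\ref{lemma:centering}, to build $\mathcal D^E$, so that equality of scalars translates directly into the orthogonal sum being a similarity. A second point to verify is that merged summands may lie in different $F_\mu$-orbits. This is harmless: Definition~\ref{def:decomp} only requires virtual invariance and block conformality, not irreducibility of the merged pieces.
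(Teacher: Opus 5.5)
Your proposal is correct and follows the same route as the paper. From $w^{j',j}=0$ you obtain an $F_\mu$-stable equivalence relation on $J$ that identifies indices with equal scalar factors, and merging the classes gives a strictly coarser virtually $\Gamma_\mu$-conformal decomposition, which contradicts coarseness. Your extra steps only make explicit what the paper's short argument leaves implicit: passing from $\mu$-a.e.\ to all of $\mathrm{supp}(\mu)$ by continuity, extending the identity to all of $\Gamma_\mu$ via the product formula for $a^k$, generating the relation from the pairs $(t(j),t(j'))$ so that $F_\mu$-invariance is automatic, and checking $\mathcal{D}^E|_{W^k}=\mathcal{C}^k$.
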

\begin{proof}
Suppose for contradiction that there exist distinct $j,j'\in J$ with $w^{j',j}=0$, that is, $a^{s(j')} \equiv a^{s(j)}$ for any $s \in F _{\mu}$. The equality $a^{i'} \equiv a^{i}$ defines an equivalence relation $i' \sim i$ on $J$, and any $s\in F _{\mu}$ maps an equivalence class to another one. Let $U^i$ be the direct sum $\oplus _{i'\in i} W^{i'}$ of an equivalence class $i$. We find that every $U^{i}$ is a virtually $\Gamma _{\mu}$-invariant subspace and $\Gamma _{\mu}$ is conformal on $U^{i}$. This means that $\oplus _{i \in I} U^{i}$ is a virtually $\Gamma _{\mu}$-conformal decomposition that is coarser than $\oplus _{j\in J}W^{j}$. At least one subspace $U^{i}$ is strictly larger than $W^{i'}$ because there is an equivalent class that has at least $2$ elements, $j$ and $j'$. This cannot happen since $\oplus _{j\in J}W^{j}$ is already a coarsest one. 
\end{proof}

With the mean, centering, and variance established, we now verify all three conditions of Theorem~\ref{lemma:sch} together.
\begin{lemma}\label{lemma:mix}
Let $\mu$ be a compactly supported semisimple probability measure on $G$ with $\lambda_1(\mu) = \lambda_d(\mu)$. For every $j,j' \in J$, the sequence of logarithmic slopes $\alpha _{m}^{j,j'}$ satisfies all conditions required by Theorem \ref{lemma:sch}.
\end{lemma}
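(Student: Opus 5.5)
We verify the three hypotheses (i'), (ii') and (iii) of Theorem~\ref{lemma:sch} for $Z_m=\alpha_m^{j',j}$, working with the rescaled basis of Lemma~\ref{lemma:centering} and assuming (after passing to $\mu^{*p_\mu}$ via Lemma~\ref{lemma:aper}) that $\mu$ is aperiodic on $F_\mu$.

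The structural observation is that $\alpha_m^{j',j}$ is a function of $(\zeta_{m-1},g_m)$. Here $\zeta_{m-1}$ is $\sigma(g_1,\dots,g_{m-1})$-measurable, and $g_m$ is independent of that past. Hence $(\zeta_{m-1},g_m)$ is a Markov chain on $F_\mu\times K$ driven by i.i.d.\ inputs, and every quantity reduces to the finite chain $\zeta_m$ plus fresh independent noise.

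\textbf{Condition (i').} Uniform boundedness is immediate: for $g\in K$, every ratio $a^{k}(g)/a^{k'}(g)$ lies in $[L_K^{-1},L_K]$, so $|\alpha_m|\le\log L_K$. For the mean there are two cases.
\begin{itemize}
\item If $j,j'$ lie in different $F_\mu$-orbits, Lemma~\ref{lemma:mean} gives $|\mathbb{E}\alpha_m|\le C_0\rho^{m-1}$.
\item If $j,j'$ lie in the same orbit, then $\zeta_{m-1}(j),\zeta_{m-1}(j')$ lie in a common orbit $J^i$. Conditioning on $\zeta_{m-1}=s$ and using independence of $g_m$, Lemma~\ref{lemma:centering} gives $\mathbb{E}[\alpha_m\mid\zeta_{m-1}=s]=0$, so $\mathbb{E}\alpha_m=0$.
\end{itemize}

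\textbf{Condition (ii').} Define $h(s)=\mathbb{E}_g\bigl[\log\bigl(a^{s(j')}(g)/a^{s(j)}(g)\bigr)\bigr]$, so that $\mathbb{E}[\alpha_m\mid\mathcal F_{m-1}]=h(\zeta_{m-1})$. The martingale difference property holds as stated only when $h\equiv 0$, which is the same-orbit case above. In the cross-orbit case I would instead replace $Z_m$ by $\alpha_m-h(\zeta_{m-1})$. The difference $\sum_m h(\zeta_{m-1})$ can be controlled by solving the Poisson equation on $F_\mu$, which is possible because $h$ has zero uniform mean by Lemma~\ref{lemma:mean}. This shows the difference is a martingale plus a bounded coboundary, so the Berry--Esseen bound transfers up to an $O(1/\sqrt n)$ shift. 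The same Poisson correction could alternatively be absorbed into a redefinition of $Z_m$.

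For the variance, write
\[
\mathbb{E}[Z_m^2]=\sum_{s}\mathbb{P}(\zeta_{m-1}=s)\,\mathbb{E}\Bigl[\log^2\tfrac{a^{s(j')}}{a^{s(j)}}\Bigr].
\]
By Lemma~\ref{lemma:markov}, $\mathbb{P}(\zeta_{m-1}=s)$ converges exponentially fast to $1/|F_\mu|$, so $\mathbb{E}[Z_m^2]\to w^{j',j}$, which is positive by Lemma~\ref{lemma:variance}. Therefore $w_n^2/n\to w^{j',j}>0$. If centered second moments are required, the conditional variance converges similarly to $\frac{1}{|F_\mu|}\sum_s\mathrm{Var}_g$. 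Its positivity needs a small extension of Lemma~\ref{lemma:variance}: if all these variances vanished, the $a$-ratios would be deterministic given $s$, and the centering would then force them to equal $1$. This is the point I expect to be most delicate.

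\textbf{Condition (iii).} Take $A\in\sigma(Z_1,\dots,Z_l)\subset\sigma(g_1,\dots,g_l)$ and $B\in\sigma(Z_{l+\jmath+1},\dots)$. The later variables depend only on $\zeta_{l+\jmath}$ together with $g_{l+\jmath+1},g_{l+\jmath+2},\dots$, and the latter are independent of $\mathcal F_{l+\jmath}$. Hence
\[
\mathbb{P}[B\mid\mathcal F_{l+\jmath}]=f(\zeta_{l+\jmath})
\]
for a function $f$ with values in $[0,1]$. Further, $\zeta_{l+\jmath}=\zeta'\zeta_l$, where $\zeta'$ is the product of $\jmath$ fresh steps, independent of $\mathcal F_l$ and distributed as $\mu^{*\jmath}$ on $F_\mu$. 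Lemma~\ref{lemma:markov} gives $|\mathbb{P}(\zeta'\zeta_l=t\mid\mathcal F_l)-1/|F_\mu||\le C\rho^\jmath$ uniformly in $t$. Therefore
\[
|\mathbb{P}[B\mid A]-\mathbb{P}[B]|\le 2|F_\mu|C\rho^\jmath,
\]
which is exponential $\varphi$-mixing with $c=-\log\rho$.
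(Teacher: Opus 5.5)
Your treatment of (i'), of the variance in (ii'), and of the mixing condition (iii) matches the paper's. You condition on $\zeta_{m-1}$, use Lemma~\ref{lemma:markov} and Lemma~\ref{lemma:variance}, and write $\mathbb{P}[B\mid\mathcal F_{l+\jmath}]=f(\zeta_{l+\jmath})$.

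The gap is in the martingale-difference part of (ii') for $j,j'$ in different $F_\mu$-orbits. You assert that $h(s)=\mathbb{E}_g[\log(a^{s(j')}/a^{s(j)})]$ vanishes identically only in the same-orbit case. For cross-orbit pairs you then replace $Z_m$ by a Poisson-corrected sequence. That does not prove the lemma, which is a claim about $\alpha_m^{j,j'}$ itself; for those pairs you are asserting that one of its hypotheses fails. Even as a route to the Berry--Esseen bound it is left open. The corrected increments $\alpha_m-h(\zeta_{m-1})+u(\zeta_m)-Pu(\zeta_{m-1})$ have a different variance, and Lemma~\ref{lemma:variance} does not give its non-degeneracy. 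This is exactly the ``delicate'' point you flag and do not resolve.

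The missing observation is that, in the centred basis of Lemma~\ref{lemma:centering}, $h\equiv 0$ for \emph{every} pair. Lemma~\ref{lemma:const} applies to any adapted basis: the exponent does not depend on the norm, and the conformal volume measures $\nu^i$ are unchanged by blockwise scalar rescaling, so they remain $\mu$-stationary. So for each orbit $J^i$, the common value $C^i$ of $\int\log\tilde a^{k}\,d\mu$ over $k\in J^i$ equals the orbit average, which is $\lambda_1(\mu)$. Hence $\int\log\tilde a^{k}\,d\mu=\lambda_1(\mu)$ for all $k\in J$, regardless of orbit, and $h(s)=0$ for all $s\in F_\mu$ and all $j,j'$.

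Consequently $\mathbb{E}[\alpha_m\mid\mathcal F_{m-1}]=0$. The slopes themselves form a martingale difference sequence with $\mathbb{E}\alpha_m=0$, so Lemma~\ref{lemma:mean} is not even needed. Their conditional variance is $f(\zeta_{m-1})$ with $f(s)=\mathbb{E}[\log^2(a^{s(j')}/a^{s(j)})]$. Then $w_n^2/n\to\frac{1}{|F_\mu|}\sum_s f(s)>0$ by Lemma~\ref{lemma:variance} directly, with no extension and no Poisson correction.

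The paper's proof simply calls the martingale-difference property ``immediate from the i.i.d.\ structure''. Your instinct that it hinges on $h\equiv 0$ is correct, and the argument above is what justifies it. With it, your proof closes along the paper's lines.
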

\begin{proof}
Condition~(i'). By Lemma~\ref{lemma:centering}, $\mathbb{E}[\alpha_n^{j',j}] = 0$ when $j$ and $j'$ belong to the same $F_\mu$-orbit. When they belong to different orbits, $|\mathbb{E}[\alpha_n^{j',j}]| \leq C_0\rho^{n-1}$ by Lemma~\ref{lemma:mean}. The uniform bound on $|\alpha_m^{j',j}|$ follows from compactness of $K$.

Condition~(ii'). Expanding the expectation using the indicator of the permutation state and the independence of $\zeta_{m-1}$ from $a_m$ (which has the same distribution as $a_1$):
\begin{align*}
w_{n}^{2}  &:= \sum_{m=1}^{n} \mathbb{E}\!\left[ \left| \alpha _{m}^{j',j} \right|^{2} \right]
= \sum_{m=1}^{n} \mathbb{E}\!\left[ \sum_{s \in F_\mu} \mathbf{1}_{\zeta _{m-1}=s} \log ^{2} \frac{a_{m}^{s(j')}}{a_{m}^{s(j)}} \right]
= \sum_{m=1}^{n} \sum_{s \in F_\mu} \mathbb{E}\!\left[ \log ^{2} \frac{a_{1}^{s(j')}}{a_{1}^{s(j)}} \right] \mathbb{P}[\zeta _{m-1} =s].
\end{align*}
Set $f(s) = \mathbb{E}\!\left[ \log ^{2} \frac{a_{1}^{s(j')}}{a_{1}^{s(j)}} \right] \geq 0$. Since the Markov chain $(\zeta_m)_m$ converges to the uniform distribution $\zeta_\infty$ on $F_\mu$ (Lemma~\ref{lemma:markov}):
$$
\frac{1}{n} w_{n}^{2} = \frac{1}{n} \sum_{m=1}^{n} \mathbb{E}[f(\zeta _{m-1})] \longrightarrow \mathbb{E}[f(\zeta _{\infty})] = \frac{1}{\left| F_\mu \right|}\sum_{s \in F_\mu} f(s) > 0,
$$
where positivity follows from Lemma~\ref{lemma:variance}. This gives $\lim_{n\to\infty} w_n^2/n > 0$.

The martingale difference property of $(\alpha_m^{j',j} - \mathbb{E}[\alpha_m^{j',j}])_{m\geq 1}$ with respect to the natural filtration $\mathcal{F}_m = \sigma(g_1,\ldots,g_m)$ is immediate from the i.i.d.\ structure of $(g_m)$.

Condition~(iii). Let $\mathcal{F}_l = \sigma(g_1, \dots, g_l)$ be the natural filtration. Clearly, the past slopes generate a sub-algebra $\sigma(Z_1, \dots, Z_l) \subset \mathcal{F}_l$. 

Any future event $B \in \sigma(Z_{l+\jmath+1}, \dots)$ is completely determined by the intermediate permutation state $\zeta_{l+\jmath}$ and the future group increments $(g_m)_{m > l+\jmath}$. Since these future increments are i.i.d.\ and independent of $\mathcal{F}_{l+\jmath}$, the conditional probability of $B$ given the past is determined entirely by the finite state at time $l+\jmath$. Specifically, there exists a deterministic function $f: F_\mu \to [0,1]$ such that $\mathbb{P}(B \mid \mathcal{F}_{l+\jmath}) = f(\zeta_{l+\jmath})$.

For any past event $A \in \sigma(Z_1, \dots, Z_l)$ with $\mathbb{P}(A)>0$, the state $\zeta_l$ is $\mathcal{F}_l$-measurable. The transition from $\zeta_l$ to $\zeta_{l+\jmath}$ is driven by $\jmath$ independent permutations $\pi_{l+\jmath} \dots \pi_{l+1}$. By Lemma \ref{lemma:markov} applied to the aperiodic random walk on the finite group $F_\mu$, the distribution of $\zeta_{l+\jmath}$ converges to the uniform measure exponentially fast, uniformly with respect to any initial distribution: there exists constants $C_1 > 0$ and $c>0$ that do not depend on $A$ or $B$, such that:
$$
\max_{s \in F_\mu} \left| \mathbb{P}(\zeta_{l+\jmath} = s \mid A) - \mathbb{P}(\zeta_{l+\jmath} = s) \right| \leq 2 C_1 e^{-c\jmath}.
$$
Integrating $f$ against these distributions yields the $\varphi$-mixing bound:
$$
\left| \mathbb{P}(B \mid A) - \mathbb{P}(B) \right| = \left| \sum_{s \in F_\mu} f(s) \left( \mathbb{P}(\zeta_{l+\jmath} = s \mid A) - \mathbb{P}(\zeta_{l+\jmath} = s) \right) \right| \leq 2 |F_\mu| C_1 e^{-c\jmath},
$$
with $C_{\mathrm{mix}} = 2(d!)C_1$.
\end{proof}

\subsection{Proof of the accumulation estimate}

\begin{lemma}\label{lemma:accum}
Let $x = \mathbb{R}(v^{j})_{j \in J} \in X = \mathbb{P} (\oplus_{j\in J} W^j)$. We have
$$
\mu ^{*n}*\delta_x ( B_{r}^{c} ) \leq \sum_{j \neq j' \in J}\mathbb{P}\left( (g_n)_n: -R \leq \log \frac{\|v^{j'}\|}{\|v^{j}\|} + \sum_{m=1}^{n} \alpha _{m}^{j',j} \leq R \right),
$$
where $R=-\log{ \frac{r}{\sqrt{ d }}}$. We interpret the event as empty if the denominator $\| v^j \| =0$.
\end{lemma}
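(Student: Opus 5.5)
The plan is to write the random point $g_n\cdots g_1 x$ explicitly in the block coordinates of Definition~\ref{def:decomp} and read off its component norms. For $v=(v^j)_{j\in J}$, the block-permutation/block-conformal form $g=qa$ gives $(g v)^{\pi(j)} = q^{j}(g)\,a^{j}(g) v^j$. Here $q^j(g)$ is orthogonal and $a^j(g)$ is the scalar on $W^j$; the index convention should be matched with the definition of $\alpha_m^{j',j}$ via $\zeta_{m-1}$. Iterating, I expect
\[
\|(g_n\cdots g_1 v)^{\zeta_n(j)}\| = \|v^j\|\prod_{m=1}^n a_m^{\zeta_{m-1}(j)}.
\]
The reason is that at step $m$ the $j$-th original component sits in block $\zeta_{m-1}(j)$ and is scaled by $a_m^{\zeta_{m-1}(j)}$, with the orthogonal parts not affecting norms. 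Hence for $\|v^j\|\neq 0$,
\[
\log\frac{\|(g_n\cdots g_1 v)^{\zeta_n(j')}\|}{\|(g_n\cdots g_1 v)^{\zeta_n(j)}\|} = \log\frac{\|v^{j'}\|}{\|v^j\|} + \sum_{m=1}^n \alpha_m^{j',j}.
\]

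Next comes a purely geometric step. Let $y=\mathbb{R}w$ with $w=(w^k)_{k\in J}$. I claim that if $y\in B_r^c$, then there exist two distinct indices $k\neq k'$ with $|\log(\|w^{k'}\|/\|w^k\|)|\le R$. To prove this, take $k$ to be an index maximizing $\|w^k\|$. We may use an adapted orthogonal decomposition, since the basis was chosen so the $W^j$ are orthogonal and conformal norms are Euclidean on each block. Then $\|w^k\|\ge \|w\|/\sqrt{|J|}\ge\|w\|/\sqrt d$, and the distance of $y$ to $\mathbb{P}(W^k)$ is $\|w-w^k\|/\|w\|$. Since $y\notin B^k$, we get $\|w-w^k\|\ge r\|w\|$. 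Hence some other $k'$ satisfies $\|w^{k'}\|\ge r\|w\|/\sqrt{|J|}\ge (r/\sqrt d)\|w^k\|$; to be careful the constant may require $\sqrt d$ with $|J|\le d$, which is absorbed into the definition of $R$. Also $\|w^{k'}\|\le\|w^k\|$. Thus $\log(\|w^{k'}\|/\|w^k\|)\in[-R,0]\subset[-R,R]$ with $R=-\log(r/\sqrt d)$. The main obstacle I anticipate is exactly this constant bookkeeping (e.g.\ whether one needs $r/\sqrt d$ or $r/d$), and checking that the distance to $\mathbb{P}(W^k)$ is the sine of the angle, i.e.\ $\|w-w^k\|/\|w\|$, which holds by orthogonality of the blocks.

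Finally, apply this to $w=g_n\cdots g_1v$. On the event $g_n\cdots g_1x\in B_r^c$ there are distinct $k,k'$ as above, and writing $k=\zeta_n(j)$, $k'=\zeta_n(j')$ with $j\ne j'$ (as $\zeta_n$ is a bijection), both $\|v^j\|$ and $\|v^{j'}\|$ are nonzero, since the product formula shows that zero components stay zero. Substituting the product formula, the event is contained in
\[
\bigcup_{j\neq j'}\Big\{-R\le \log\tfrac{\|v^{j'}\|}{\|v^j\|}+\sum_{m=1}^n\alpha_m^{j',j}\le R\Big\}.
\]
A union bound together with $\mu^{*n}*\delta_x(B_r^c)=\mathbb{P}(g_n\cdots g_1x\in B_r^c)$ then gives the stated inequality, with events having $\|v^j\|=0$ treated as empty.
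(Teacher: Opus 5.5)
Your proposal is correct and follows the paper's proof: the telescoped component-norm identity along $\zeta_m$, the geometric observation that a point of $B_r^c$ has a second block comparable (within ratio $r/\sqrt d$) to its largest block, relabelling via the bijection $\zeta_n$, and a union bound. The only difference is that you pick the maximizing index $k$ at the outset and use $y\notin B^k$ directly. The paper instead first shows that for every $j$ some $j'\neq j$ has $\|v^{j'}\|/\|v^j\|\ge e^{-R}$, and only then specializes to the maximizing index through its inclusion \eqref{inclusion}.
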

\begin{proof}
A point $x \in X$ belongs to $B_{r}^{c}$ if and only if for every $j \in J$, 
$$
d_X(x, \mathbb{P}(W^{j})) = \inf_{w \in W^{j} \setminus \left\{ 0 \right\}} |\sin \angle(v,w)| \geq r,
$$
where $x=\mathbb{R}v$. Write $v = \sum_{j \in J} v^{j}$ according to the decomposition. For $w \in W^j \setminus \{0\}$, 
$$
d_X(x, \mathbb{P}(W^{j})) = \sqrt{ 1 - \frac{\|v^{j}\|^{2}}{ \sum_{j' \in J} \|v^{j'}\|^{2}} }.
$$

The point $x$ belongs to $B_{r}^{c}$ if and only if for every $j \in J$, 
$$
\frac{\sum_{j' \in J \setminus \left\{ j \right\}} \|v^{j'}\|^{2}}{\|v^{j}\|^{2}} \geq \frac{1}{1-r^{2}} - 1.
$$
If so, there exists some $j' \in J \setminus \left\{ j \right\}$ with $v^{j'} \neq 0$, such that $\frac{\|v^{j'}\|}{\|v^{j}\|} \geq \frac{r}{\sqrt{ d}}$. It follows that
$$
B_{r}^{c} \subset \cap _{j \in J}\cup _{j' \in J\setminus \left\{ j \right\}} \left\{ x: \frac{\|v^{j'}\|}{\|v^{j}\|} \geq e^{-R} \right\} ,
$$
where $R = -\log \frac{r}{\sqrt{ d }}$. We claim that
\begin{equation} \label{inclusion}
\cap _{j \in J}\cup _{j' \in J\setminus \left\{ j \right\}} \left\{ x = \R v: v^{j'} \neq 0 \text{ and } \frac{\|v^{j'}\|}{\|v^{j}\|} \geq e^{-R} \right\} \subset \cup _{j \neq j' \in J} \left\{ x = \R v :  -R \leq \log \frac{\|v^{j'}\|}{\|v^{j}\|} \leq R \right\},
\end{equation}
which we verify at the end of the proof. Accepting the inclusion for now, we have
$$
\mu ^{*n}*\delta_x (B_{r}^{c}) = \mathbb{P}(g_{n}\dots g_{1}x \in B_{r}^{c}) \leq \sum_{j \neq j' \in J} \mathbb{P}\left( -R \leq \log \frac{\|v_{n}^{j'}\|}{\|v_{n}^{j}\| } \leq R \right).
$$

Write $v = \sum_{j \in J} v^j$ with $v^j \in W^j$, and let $v_n^j$ denote the $W^j$-component of $g_n \cdots g_1 v$. At each step $m$, the matrix $g_m$ maps $W^{\zeta_{m-1}(j)}$ to $W^{\zeta_m(j)}$ by a conformal transformation with isometric part $q_m^{\zeta_{m-1}(j)} \in \mathrm{O}(\dim W^j)$ and scalar factor $a_m^{\zeta_{m-1}(j)}$ (Definition~\ref{def:decomp}). Therefore the $W^{\zeta_m(j)}$-component of the walk satisfies, at each step,
\begin{equation}\label{eq:step}
\|v_m^{\zeta_m(j)}\| = a_m^{\zeta_{m-1}(j)} \cdot \|v_{m-1}^{\zeta_{m-1}(j)}\|.
\end{equation}
Telescoping \eqref{eq:step} over $m = 1, \ldots, n$ and taking logarithms:
$$
\log \|v_n^{\zeta_n(j)}\| = \log \|v^j\| + \sum_{m=1}^n \log a_m^{\zeta_{m-1}(j)}.
$$
Subtracting this identity for $j$ from the one for $j'$ and recalling $\alpha_m^{j',j} = \log a_m^{\zeta_{m-1}(j')} - \log a_m^{\zeta_{m-1}(j)}$:
\begin{equation}\label{eq:coord_exp}
\log \frac{\|v_n^{\zeta_n(j')}\|}{\|v_n^{\zeta_n(j)}\|} = \log \frac{\|v^{j'}\|}{\|v^{j}\|} + \sum_{m=1}^{n} \alpha_m^{j',j}.
\end{equation}
Since $\zeta_n \in F_\mu$ is a bijection on $J$, writing $\tilde j = \zeta_n(j)$ and $\tilde j' = \zeta_n(j')$ the event $\{-R \leq \log \|v_n^{\tilde j'}\|/\|v_n^{\tilde j}\| \leq R\}$ ranges over all pairs $\tilde j \neq \tilde j'$ just as $(j,j')$ do. Substituting \eqref{eq:coord_exp} into the probability bound above yields the statement.

To verify the inclusion~\eqref{inclusion}, let $x=\mathbb{R}v$ belong to the left-hand side and let $j \in J$ be an index maximizing $\|v^{j}\| > 0$. Then for $j' \in J \setminus \left\{ j \right\}$ given by the left-hand side, $-R \leq \log \frac{\|v^{j'}\|}{\|v^{j}\|} \leq 0 \leq R$.
\end{proof}

%\subsection{Proof of the Accumulation Estimate}

Combining Lemma~\ref{lemma:accum} with the Berry--Esseen bound from Theorem~\ref{lemma:sch} gives the following polynomial bound on the mass placed outside the neighborhoods of the conformal subspaces.

\begin{prop}[Accumulation]\label{prop:accum}
There exist a universal constant $\gamma>0$ and a constant $C(K,\mu)>0$, such that for any $x \in X$, any $r\in\left( 0, \frac{1}{2} \right)$ and any $n \in \mathbb{N}^{*}$,
$$
\mu ^{*n}*\delta_x ( B_{r}^{c} ) \leq \frac{C}{n^{\gamma}}(1 - \log r).
$$
Hence for any probability measure $\nu'$ on $X$, any $r \in (0, \frac{1}{2})$ and any $n\in \mathbb{N}^*$,
$$
\nu _{n}(B_{r}^{c}) \leq \frac{C}{n^{\gamma}} (1-\log r).
$$
\end{prop}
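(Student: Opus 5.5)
The plan is to combine Lemma~\ref{lemma:accum} with the Berry--Esseen bound of Theorem~\ref{lemma:sch}, whose hypotheses for the slopes are supplied by Lemma~\ref{lemma:mix}. We fix $x=\mathbb{R}v$ and a pair $j\neq j'$ with $v^j\neq 0$. Set $b=\log(\|v^{j'}\|/\|v^{j}\|)$, which is a deterministic shift. The event in Lemma~\ref{lemma:accum} is
\[
\Bigl\{\tfrac{-R-b}{w_n}\le \tfrac{1}{w_n}\textstyle\sum_{m=1}^n\alpha_m^{j',j}\le \tfrac{R-b}{w_n}\Bigr\},
\]
where $w_n^2=\sum_{m\le n}\mathbb{E}[(\alpha_m^{j',j})^2]$. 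This is an interval of length $2R/w_n$. The second conclusion of Theorem~\ref{lemma:sch} therefore bounds its probability by
\[
C\Bigl(\frac{\log n}{\sqrt n}+\frac{2R}{w_n}\Bigr).
\]
The key point is that this bound is uniform in $b$, hence uniform in $x$. If $v^{j'}=0$, the event is empty.

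Next we make $w_n$ explicit. By condition (ii') and the computation in Lemma~\ref{lemma:mix}, $w_n^2/n$ converges to a positive limit. Since $w_n>0$ for every $n\ge1$, we get a uniform bound $w_n\ge c_w\sqrt n$ for all $n\ge1$, after shrinking $c_w$ to cover the finitely many small $n$. Summing over the at most $d^2$ pairs $(j,j')$ then gives
\[
\mu^{*n}*\delta_x(B_r^c)\le C'\Bigl(\frac{\log n}{\sqrt n}+\frac{R}{\sqrt n}\Bigr).
\]
We have $R=-\log r+\tfrac12\log d\le C''(1-\log r)$, and $\log n/\sqrt n\le C_\gamma n^{-\gamma}$ for any $\gamma<1/2$. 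Taking $\gamma=1/4$, say, yields $\frac{C}{n^{\gamma}}(1-\log r)$ for $n\ge2$. The case $n=1$ is trivial, because the left-hand side is at most $1$ once $C\ge1$. The second statement follows by integrating the first over $x$ against $\nu'$: by definition $\nu_n=P_\mu^n\nu'=\int\mu^{*n}*\delta_x\,d\nu'(x)$, and the bound is uniform in $x$.

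The main obstacle is that the Berry--Esseen constant $C$ must not depend on $x$ or on $n$. There are two places where care is needed.

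First, Theorem~\ref{lemma:sch} as proved only gives the bound for $n$ large enough that $\kappa_n\ge\tfrac12$. For small $n$ we absorb the finitely many cases into the constant, using the trivial bound by $1$.

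Second, the slope process depends on the initial permutation state. The sequence $\alpha_m^{j',j}$ is defined with $\zeta_0=\mathrm{id}$, and its law depends only on the pair $(j,j')$, not on $x$. So there are only finitely many sequences to which Theorem~\ref{lemma:sch} is applied, and we take the maximum of their constants.

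I would also check that the reduction to an aperiodic $F_\mu$, which Lemma~\ref{lemma:mean} and Lemma~\ref{lemma:mix} need, is in force. If it is not, I would first replace $\mu$ by $\mu^{*p}$ as in Lemma~\ref{lemma:aper}. I would then treat $n$ not divisible by $p$ by conditioning on the first $n \bmod p$ steps: the bound is uniform over the starting point, so it survives applying a fixed group element before the walk. The exponent $\gamma$ is universal, $1/4$ for instance.
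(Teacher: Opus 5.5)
Your proposal is correct and follows essentially the paper's proof. Lemma~\ref{lemma:accum} reduces the question to pairwise slope events, the Berry--Esseen bound of Theorem~\ref{lemma:sch} (hypotheses from Lemma~\ref{lemma:mix}, with $w_n \gtrsim \sqrt{n}$) controls each event uniformly in the deterministic shift, and summing over pairs, absorbing $\log n$ into $n^{-\gamma}$ and integrating against $\nu'$ gives both claims. Your bookkeeping on uniformity in $x$ and small $n$ spells out what the paper leaves implicit. Two minor points. The claim that $w_n>0$ for every $n$ need not hold, but your small-$n$ absorption makes it unnecessary. The detour through $\mu^{*p}$ is not needed either, since the proposition is only used under the aperiodicity assumption of Proposition~\ref{prop:conv}.
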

\begin{proof}
By Lemma~\ref{lemma:mix}, the sequence $(\alpha_m^{j,j'})_m$ satisfies all conditions of Theorem~\ref{lemma:sch}. In particular, there exists a Berry--Esseen bound: for each pair $j \neq j'$ and a constant $C_1 > 0$,
$$
\mathbb{P}\left( b: -R \leq \log \frac{\|v^{j'}\|}{\|v^{j}\|} + \sum_{m=1}^{n} \alpha _{m}^{j,j'} \leq R \right)
\leq C_1 \left( \frac{\log n}{\sqrt{ n }} + \frac{2R}{w\sqrt{ n }} \right),
$$
where $w = \min_{j \neq j'} w^{j',j} > 0$. By Lemma~\ref{lemma:accum}, summing over all pairs and writing $R = -\log r + \frac{1}{2}\log d$,
$$
\mu ^{*n}(g: gx \in B^{c}_{r}) \leq C_2 \frac{\log n - \log r}{\sqrt{ n }} \leq \frac{C}{n^{\gamma}}(1 - \log r),
$$
where $\gamma \in (0, 1/2]$ is a universal constant chosen so that the logarithmic factor is absorbed.
\end{proof}

\subsection{Proof of the equidistribution estimate}

The accumulation estimate controls how much mass lies far from all the subspaces $\mathbb{P}(W^j)$. The equidistribution estimate controls the imbalance between the mass near different subspaces in the same $F_\mu$-orbit; this is where the permutation structure of $F_\mu$ plays a role. We work with the sharper neighborhoods
$$
U^{j}_{R} := \bigcap_{j' \in J\setminus \left\{ j \right\}  } \left\{ \mathbb{R} v \in X: \log{\frac{\|v^{j}\|}{\|v^{j'}\|}} > R \right\},
$$
where $\log \frac{\| v^j \|}{0} > R$ is interpreted as true when $\| v^j \| \neq 0$ and false otherwise.

\begin{prop}[Equidistribution]\label{prop:equid}
There exist a universal constant $\gamma>0$ and a constant $C(K,\mu)>0$, such that if $R = O(\log n)$, then for any $x \in X$, $n \in \mathbb{N}^{*}$, $i \in I$ and $j,j'\in J^{i}$,
$$
\left| \mu ^{*n}*\delta _{x}(U^{j'}_{R}) - \mu ^{*n}*\delta _{x}(U^{j}_{R}) \right| \leq  \frac{C}{n^{\gamma}} .
$$
Hence for any probability measure $\nu'$ on $X$ and any $n \in \mathbb{N}^{*}$,
$$
\sum_{i\in I} \sum_{j,j' \in J^i} \left| \nu _{n}(B^{j}) - \nu _{n}(B^{j'}) \right| \leq  \frac{C}{n^{\gamma}}.
$$
\end{prop}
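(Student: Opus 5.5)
The idea is to exploit the symmetry of the random walk on the finite group $F_\mu$. The event $\{g_n\cdots g_1x\in U^{j}_R\}$ is determined by which block dominates after $n$ steps. By \eqref{eq:coord_exp}, the $W^{\zeta_n(k)}$-component has logarithmic size $\log\|v^k\|+\sum_{m=1}^n\log a_m^{\zeta_{m-1}(k)}$. Therefore
\[
\{g_n\cdots g_1 x\in U^j_R\}=\bigcup_{k\in J}\Bigl(\{\zeta_n(k)=j\}\cap D^k_R\Bigr),
\qquad
D^k_R:=\bigcap_{k'\neq k}\Bigl\{\log\tfrac{\|v^{k}\|}{\|v^{k'}\|}-\sum_{m=1}^n\alpha_m^{k',k}>R\Bigr\},
\]
so that the dominating index $k$ is transported to $j$ by $\zeta_n$. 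The plan is to show that $\zeta_n$ is asymptotically independent of the events $D^k_R$, and then to use Lemma~\ref{lemma:markov}. By that lemma $\zeta_n$ is nearly uniform on $F_\mu$, so for $j,j'$ in the same orbit $J^i$ we have $\mathbb{P}(\zeta_n(k)=j)\approx\mathbb{P}(\zeta_n(k)=j')$ whenever $k\in J^i$. If $k\notin J^i$, then $\zeta_n(k)\notin J^i$, and both probabilities vanish.

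To decouple, I will split time at $n_0=\lfloor n/2\rfloor$ and write $\zeta_n=(\pi_n\cdots\pi_{n_0+1})\zeta_{n_0}$. The event $D^k_R$ depends on the whole path, so I would first replace it with an event determined by the first $n-\ell$ steps, where $\ell=\lceil C'\log n\rceil$. Over $\ell$ steps the partial sums $\sum_{m=n-\ell+1}^{n}\alpha_m^{k',k}$ are bounded by $\ell\log L_K=O(\log n)$. The symmetric difference between $D^k_R$ and its truncated version is then contained in an event where some slope sum lies in a window of width $O(\log n)$ around $R$ (here $R=O(\log n)$ is used). By Theorem~\ref{lemma:sch} together with Lemma~\ref{lemma:mix} and Lemma~\ref{lemma:variance}, exactly as in Proposition~\ref{prop:accum}, this window has probability $O((\log n)/\sqrt n)$.

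Conditionally on $\mathcal{F}_{n-\ell}$, the last $\ell$ permutations are independent, so
\[
\mathbb{P}(\zeta_n(k)=j\mid\mathcal{F}_{n-\ell})=\mu^{*\ell}\bigl(\{s:s(\zeta_{n-\ell}(k))=j\}\bigr).
\]
By Lemma~\ref{lemma:markov}, this equals $1/|J^i|+O(\rho^\ell)$ whenever $k\in J^i$, because the uniform measure on $F_\mu$ pushes forward to the uniform measure on the orbit. Choosing $C'$ so that $\rho^\ell\le n^{-1}$, summing over $k\in J$, and subtracting the same expression for $j'$ gives
\[
\left|\mu^{*n}*\delta_x(U^{j'}_R)-\mu^{*n}*\delta_x(U^j_R)\right|\le C\,\frac{\log n}{\sqrt n},
\]
which is at most $Cn^{-\gamma}$ for a suitable $\gamma$. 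This argument assumes, as in the previous subsection, that $\mu$ is aperiodic in $F_\mu$ after the reduction to $\mu^{*p_\mu}$.

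For the statement about $\nu_n=P_\mu^n\nu'=\int\mu^{*n}*\delta_x\,d\nu'(x)$, the pointwise bound integrates directly. The remaining task is to compare $B^j$ with $U^j_R$. I will take $R=-\log(r/\sqrt d)$ with $r$ tied to $n$ polynomially, so that $R=O(\log n)$. Then $U^j_{R'}\subset B^j$ for $R'$ slightly larger than $R$, and $B^j\setminus U^j_{R''}$ is contained in the set where some ratio $\log(\|v^{j}\|/\|v^{k}\|)$ lies in a bounded window of size $O(\log n)$. By Lemma~\ref{lemma:accum} and the Berry--Esseen estimate, that set has $\nu_n$-mass $O((\log n)/\sqrt n)$.

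The main obstacle is the truncation step. It requires uniform Berry--Esseen control of \emph{windows} of slopes over all initial offsets $\log\|v^{k'}\|/\|v^k\|$, and that control must hold simultaneously for all pairs $k\neq k'$. Some care is needed because the window width $O(\log n)$ grows with $n$, which is what forces the final exponent down to some $\gamma<1/2$.
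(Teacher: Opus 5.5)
Your proof is correct, but it reaches the pointwise estimate by a different route from the paper.

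The paper proves a one-step recursion. It conditions on the last permutation $\pi_{n+1}$ and pays a Berry--Esseen window of width $2\log L_K$ at every time. This shows that $\omega_{n+1}$ equals $Q\omega_n$ up to an error $O((R+\log n)/\sqrt{n})$, where $Q^{j,j_0}=\mathbb{P}(\pi_1(j_0)=j)$ is the induced walk on the orbit $J^i$. Iterating, and using the spectral gap of $Q$ on vectors orthogonal to $\mathbf{1}$, then damps the non-uniform part of $\omega_n$.

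You instead decompose $U^j_R$ exactly according to which source block $k=\zeta_n^{-1}(j)$ dominates. You truncate the domination event once, at time $n-\ell$ with $\ell\asymp\log n$, which costs a single window of width $2\ell\log L_K$. You then read off the conditional near-uniformity of $\zeta_n(k)$ on $J^i$, given $\mathcal{F}_{n-\ell}$, directly from Lemma~\ref{lemma:markov} and the transitivity of $F_\mu$ on the orbit.

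What each route buys:
\begin{itemize}
\item Your version gives the two-sided comparison in one stroke. It uses the Berry--Esseen bound at a single time rather than at every intermediate time. It needs neither the convolution sum $\sum_k Q^{n-k-1}\varepsilon_k$ nor a separate spectral analysis of $Q$.
\item The paper's recursion is local in time and never has to track which source block lands in $W^j$.
\end{itemize}

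Two small points:
\begin{itemize}
\item Theorem~\ref{lemma:sch} is uniform in the location of the window, and your window width does not involve $R$. So your first estimate actually holds uniformly in $R$; contrary to your parenthetical remark, $R=O(\log n)$ is not used there.
\item For the passage to $B^j$, the clean statement is the sandwich $U^j_R\subset B^j\subset U^j_{R''}$, with $R=-\log(r/\sqrt{d})$ and, for instance, $R''=-\log r+\tfrac12\log(1-r^2)-1$. Then $B^j\setminus U^j_R$ lies in finitely many windows of width $R-R''=O(1)$. As written, your set $B^j\setminus U^j_{R''}$ is empty, so the labels of $R'$ and $R''$ need swapping.
\end{itemize}
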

\begin{proof}
Denote $\omega_n^j = \mu^{*n}*\delta_x(U^j_R)$, $\omega_n = (\omega_n^j)_{j\in J^i} \in \mathbb{R}^{J^i}$, and $\omega_n^{j,j'} = |\omega_n^j - \omega_n^{j'}|$.

The key step is a one-step recursion that converts the equidistribution question into a problem about the spectral gap of the permutation walk.

\begin{lemma}[Recursion]\label{lemma:recursion}
There exist $C>0$ and an irreducible stochastic matrix $Q = Q(\mu)$ with entries $Q^{j,j_0} = \mathbb{P}(\pi_1(j_0) = j)$, such that
$$
\omega_{n} \leq Q^{n}\omega_{1} + C \sum_{k=1}^{n-1}Q^{n-k-1} \frac{R + \log k}{\sqrt{k}}\mathbf{1}.
$$
Here the inequality is entry-wise and $\mathbf{1}$ denotes $(1,1,\dots,1) \in \mathbb{R}^{J^i}$.
\end{lemma}
\begin{proof}
Fix $j \in J^i$. Since $(g_m)_{m\geq 1}$ are i.i.d., conditioning on the permutation $\pi_{n+1}$ of the last step:
\begin{align}
\mu^{*(n+1)}*\delta_x(U^j_R)
&= \mathbb{P}\!\left(\log\frac{\|v^j_{n+1}\|}{\|v^{j'}_{n+1}\|} > R,\;\forall j'\in J\setminus\{j\}\right) \nonumber\\
&= \sum_{s\in F_\mu} \mathbb{P}\!\left(\log\frac{\|v^{s^{-1}(j)}_n\|}{\|v^{s^{-1}(j')}_n\|} + \log\frac{a^{s^{-1}(j)}_{n+1}}{a^{s^{-1}(j')}_{n+1}} > R,\right.\nonumber\\
&\qquad\qquad\left.\forall j'\in J\setminus\{j\} \;\Big|\; \pi_{n+1}=s\right)\mathbb{P}(\pi_{n+1}=s). \label{eqn:1}
\end{align}
We estimate each conditional factor. For any fixed $s\in F_\mu$ and $j'\in J$, independence of $a_{n+1}$ from $g_1,\ldots,g_n$ gives:
\begin{align}
&\left|\mathbb{P}\!\left(\log\frac{\|v^{s^{-1}(j)}_n\|}{\|v^{s^{-1}(j')}_n\|} + \log\frac{a^{s^{-1}(j)}_{n+1}}{a^{s^{-1}(j')}_{n+1}} > R\;\Big|\;\pi_{n+1}=s\right)\right.\nonumber\\
&\quad - \left.\mathbb{P}\!\left(\log\frac{\|v^{s^{-1}(j)}_n\|}{\|v^{s^{-1}(j')}_n\|} > R\right)\right| \nonumber\\
\leq\;& \mathbb{P}\!\left(\log\frac{\|v^{s^{-1}(j)}_n\|}{\|v^{s^{-1}(j')}_n\|} \in \left[R - \log L_K,\; R + \log L_K\right]\right)\nonumber\\
\leq\;& C'\left(\frac{2\log L_K}{w^{j,j'}\sqrt{n}} + \frac{\log n}{\sqrt{n}}\right), \nonumber
\end{align}
where $\log L_K > 0$ is a uniform bound for $|\log(a^j/a^{j'})|$ as $j,j'\in J^i$ and $g\in K$ vary, and the last line uses the Berry--Esseen bound from the proof of Proposition~\ref{prop:accum}. Applying the telescoping argument to pass from pairwise bounds to the joint event: for events $A_k$, $B_k$,
\begin{align*}
\left|\mathbb{P}(\cap_k A_k) - \mathbb{P}(\cap_k B_k)\right|
&\leq \sum_l \biggl| \mathbb{P}\!\left((\cap_{k<l}A_k)\cap A_l\cap(\cap_{k>l}B_k)\right) \\
&\qquad\qquad - \mathbb{P}\!\left((\cap_{k<l}A_k)\cap B_l\cap(\cap_{k>l}B_k)\right) \biggr| \\
&\leq \sum_l \mathbb{P}(A_l\,\Delta\,B_l),
\end{align*}
we obtain, uniformly in $s$:
\begin{align}
&\left|\mathbb{P}\!\left(\log\frac{\|v^{s^{-1}(j)}_n\|}{\|v^{s^{-1}(j')}_n\|}+\log\frac{a^{s^{-1}(j)}_{n+1}}{a^{s^{-1}(j')}_{n+1}}>R,\;\forall j'\;\Big|\;\pi_{n+1}=s\right)\right.\nonumber\\
&\quad - \left.\mathbb{P}\!\left(\log\frac{\|v^{s^{-1}(j)}_n\|}{\|v^{s^{-1}(j')}_n\|}>R,\;\forall j'\right)\right| \nonumber\\
\leq\;& \sum_{j'} C'\left(\frac{R}{w^{j,j'}\sqrt{n}}+\frac{\log n}{\sqrt{n}}\right) \leq \frac{C}{\sqrt{n}}(R+\log n). \label{eqn:2}
\end{align}
Noting that $\mathbb{P}\!\left(\log\frac{\|v^{s^{-1}(j)}_n\|}{\|v^{s^{-1}(j')}_n\|} > R,\;\forall j'\right) = \mu^{*n}*\delta_x(U^{s^{-1}(j)}_R)$, we substitute \eqref{eqn:2} into \eqref{eqn:1}:
$$
\mu^{*(n+1)}*\delta_x(U^j_R) \leq \sum_{s\in F_\mu} \mu^{*n}*\delta_x(U^{s^{-1}(j)}_R)\;\mathbb{P}(\pi_1 = s) + \frac{C}{\sqrt{n}}(R+\log n).
$$
Setting $Q^{j,j_0} = \mathbb{P}(\pi_1(j_0)=j)$, this reads $\omega_{n+1} \leq Q\omega_n + C\frac{R+\log n}{\sqrt{n}}$ entry-wise. An induction yields the inequality of the lemma.
\end{proof}

Equipped with Lemma~\ref{lemma:recursion}, we complete the proof. Taking $R = O(\log n)$, there exists $C_1 > 0$ such that
$$
\omega _{n} \leq Q^{n}\omega _{1} + C_1 \sum_{m=1}^{n-1}Q^{n-m-1} \frac{1}{m^{1/3}}.
$$
Since $Q^{n}$ converges to the projection onto the uniform vector at exponential rate $\delta \in (0,1)$, we decompose $\omega_n = \omega_n^{(1)} + \omega_n^{(2)}$ into its projection onto the uniform vector and its orthogonal complement:
$$
\|\omega _{n}^{(2)}\| \leq C_1 \left( \delta ^{n} + C_2 \sum_{m=1}^{n-1} \frac{\delta ^{(n-m-1)/2}}{n^{1/3}} \right) \leq \frac{C_3}{n^{1/3}} .
$$
This gives $\|\omega _{n} - \omega _{n}^{(1)}\| \leq \frac{C_3}{n^{1/3}}$. Since all entries of $\omega_n^{(1)}$ are equal, we conclude $\omega ^{j,j'}_{n} \leq \frac{2 C_3}{n^{1/3}}$, which satisfies the first claim since $1/3$ is a universal bound $< 1/2$.

Integrating over the initial condition, write $\nu _{n}=\int_X \mu ^{*n}*\delta _{x} \, d\nu'(x)$. Using $B^{j} \setminus U^{j}_{R} \subset \bigcup_{j' \in J\setminus \left\{ j \right\}} U^{j',j}_{R}$ together with the accumulation bound of Proposition~\ref{prop:accum}, we obtain:
$$
\left| \nu _{n}(B^{j}) - \int_X  \mu ^{*n}*\delta _{x}(U^{j}_{R})\, d\nu'(x)  \right| \leq \frac{C_4}{n^{\gamma_4}}
$$
Thus:
$$
\left| \nu _{n}(B^{j'}) - \nu _{n}(B^{j}) \right| \leq  \frac{C_4}{n^{\gamma_4}} + \int_X \omega _{n}^{j,j'}  \, d\nu'(x) \leq \frac{C}{n^{\gamma}} .
$$
\end{proof}

% ----------------------------------------------------------------------
\section{Proof of the main result}\label{sec:proof}

We now have all the ingredients in place. The accumulation estimate (Proposition~\ref{prop:accum}) and the equidistribution estimate (Proposition~\ref{prop:equid}) give polynomial convergence of $\int_X \phi_\mu\,d\nu_n$ to $\lambda_1(\mu)$, while Lemma~\ref{lemma:estimate1} shows the difference $|\lambda_1(\mu') - \lambda_1(\mu)|$ is controlled by a combination of an exponential term in $d_\mathrm{W}(\mu',\mu)$ and this convergence rate. Balancing these two terms produces the log-H\"older bound.

\subsection{Convergence rate of the drift}

\begin{prop}[Convergence Rate] \label{prop:conv}
Let $K \subset G$ be a compact subset, $\mu$ and $\mu'$ be probability measures on $G$ supported in $K$. Assume $\mu$ is semisimple with $\lambda_{1}(\mu)=\lambda _{d}(\mu)$ and $\mu$ is aperiodic in $F_\mu$. Let $\nu'$ be a maximal $\mu'$-stationary probability measure and $\nu _{n} = P^{n}_{\mu}\nu'$. There exist a universal constant $\gamma > 0$ and a constant $C(K,\mu)>0$, such that for any $n \in \mathbb{N}^{*}$,
$$
\left| \int_X \phi _{\mu} \, d\nu_n - \lambda_{1}(\mu) \right| \leq \frac{C}{n^{\gamma}}.
$$
\end{prop}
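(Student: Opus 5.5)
The plan is to apply Lemma~\ref{lemma:estimate2} with $\xi = \nu_n$ and a radius $r = r(n)$ chosen as a function of $n$. The two terms on the right-hand side are then controlled by Proposition~\ref{prop:accum} and Proposition~\ref{prop:equid}, and $r$ is optimised at the end.

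Concretely, Lemma~\ref{lemma:estimate2} gives, for every $r\in(0,\tfrac12)$,
\begin{equation*}
\left| \int_X \phi_\mu \, d\nu_n - \lambda_1(\mu) \right| \leq \sqrt{2} L_K \left( r + \nu_n(B_r^c) + \sum_{i\in I}\sum_{j,j'\in J^i} \left|\nu_n(B^{j'})-\nu_n(B^{j})\right| \right).
\end{equation*}
Since $\nu_n = P_\mu^n \nu' = \int_X \mu^{*n}*\delta_x \, d\nu'(x)$, Proposition~\ref{prop:accum} bounds the accumulation term by $C n^{-\gamma}(1-\log r)$. This bound is uniform in the initial measure, which matters because $\nu'$ depends on $\mu'$ and no regularity of $\nu'$ is available. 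Proposition~\ref{prop:equid} bounds the equidistribution term by $C n^{-\gamma}$, again uniformly in $\nu'$.

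To balance the terms, take $r = n^{-\gamma}$, or $r=\min(n^{-\gamma},1/4)$ for small $n$. Then $1-\log r = 1+\gamma\log n$, and the total is at most $C' n^{-\gamma}(2+\gamma\log n)$. This logarithmic factor is absorbed by replacing $\gamma$ with any smaller exponent, say $\gamma/2$, at the cost of enlarging $C$. For small $n$ the claim is trivial, since $|\int\phi_\mu\,d\nu_n - \lambda_1(\mu)| \le 2\log L_K$. The exponent stays universal because the $\gamma$'s from the two propositions are universal.

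The main obstacle is compatibility between the parameters of Proposition~\ref{prop:equid} and the radius $r$. That proposition is stated for the sharper sets $U^j_R$ with $R=O(\log n)$, and only afterwards transferred to the $r$-neighbourhoods $B^j$. So I must check that the choice $r=n^{-\gamma}$, that is $R=-\log(r/\sqrt d) = \gamma\log n + \tfrac12\log d$, lies in the admissible regime $R = O(\log n)$. It does, with constants depending only on $d$. I must also check that the transfer error $|\nu_n(B^j) - \int \mu^{*n}*\delta_x(U^j_R)\,d\nu'|$ is controlled by the accumulation bound evaluated at this same $r$. This needs the inclusion $B^j\setminus U^j_R \subset$ the union of slabs $\{|\log\|v^{j}\|/\|v^{j'}\|| \le R'\}$ with $R' = O(\log n)$, to which the Berry--Esseen bound from Lemma~\ref{lemma:mix} and Theorem~\ref{lemma:sch} applies. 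Once the radii are matched consistently, with the same $r$ in both estimates, the three contributions are each $O(n^{-\gamma}\log n)$, and the proposition follows.
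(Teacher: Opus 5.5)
Your proposal is correct and follows the paper's proof: apply Lemma~\ref{lemma:estimate2} to $\xi=\nu_n$ with $r$ a negative power of $n$ (the paper takes $r=n^{-\beta}$ with $\beta>1/2$, you take $r=n^{-\gamma}$). In both cases $R=O(\log n)$, so Propositions~\ref{prop:accum} and~\ref{prop:equid} apply uniformly in $\nu'$, and the $\log n$ factor is absorbed by shrinking the exponent. Your explicit treatment of small $n$, capping $r$ at $1/4$ and using the trivial sup-norm bound on $\phi_\mu-\lambda_1(\mu)$, is a small improvement, since the paper's $r=n^{-\beta}$ is not in $(0,\tfrac12)$ for the first few $n$.
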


\begin{proof}
Apply Lemma \ref{lemma:estimate2} to the probability measure $\xi = \nu_n$. For any $r \in \left(0, \frac{1}{2}\right)$, we have the bound:
$$
\left| \int_X \phi_{\mu} \, d\nu_n - \lambda_1(\mu) \right| \leq \sqrt{2} L_K \left( r + \nu_n(B_r^c) + \sum_{i \in I} \sum_{j,j' \in J^i} \left| \nu_n(B^j) - \nu_n(B^{j'}) \right| \right).
$$

We must choose $r$ depending on $n$ to optimize this bound. Let $r = n^{-\beta}$ for any universal $\beta > 1/2$. Recall from Lemma \ref{lemma:accum} that the parameter $R$ is defined as $R = -\log \frac{r}{\sqrt{d}}$. With our choice of $r$, we have:
$$
R = \beta \log n + \frac{1}{2} \log d.
$$
This ensures that $R = O(\log n)$ as $n \to \infty$.

By Proposition \ref{prop:accum}, there exist a constant $C_1 > 0$ and a universal exponent $\gamma_1 > 0$ such that:
$$
\nu_n(B_r^c) \leq \frac{C_1}{n^{\gamma_1}} (1 + \beta \log n).
$$
For any universally positive $\gamma_1' < \gamma_1$, the logarithmic factor is absorbed by the polynomial decay, yielding $\nu_n(B_r^c) \leq \frac{C_1'}{n^{\gamma_1'}}$. Because $R = O(\log n)$, Proposition \ref{prop:equid} is satisfied. Thus, there exist a constant $C_2 > 0$ and a universal $\gamma_2 > 0$ such that the permutation sum is bounded by:
$$
\sum_{i \in I} \sum_{j,j' \in J^i} \left| \nu_n(B^j) - \nu_n(B^{j'}) \right| \leq \frac{C_2}{n^{\gamma_2}}.
$$

Substituting these three decay rates back into the inequality, we obtain:
$$
\left| \int_X \phi_{\mu} \, d\nu_n - \lambda_1(\mu) \right| \leq\sqrt{2} L_K \left( \frac{1}{n^\beta} + \frac{C_1'}{n^{\gamma_1'}} + \frac{C_2}{n^{\gamma_2}} \right).
$$
Setting the universal constant $\gamma = \min(\beta, \gamma_1', \gamma_2) > 0$ and taking $C = \sqrt{2} L_K(1 + C_1' + C_2)$, we conclude that $\left| \int_X \phi_{\mu} \, d\nu_n - \lambda_1(\mu) \right| \leq \frac{C}{n^\gamma}$ for all $n \in \mathbb{N}^*$.
\end{proof}

\subsection{Concluding the theorem}

Proposition~\ref{prop:conv} gives polynomial control on the drift error when $\mu$ is aperiodic. We now handle the general case by reducing it to the aperiodic one via a convolution power, and then extend from $\lambda_1$ to the full spectrum via exterior powers.

\begin{proof} [Proof of Theorem \ref{thmA}]
Assume first that $\mu$ is aperiodic in $F_\mu$. Lemma \ref{lemma:estimate1} and Proposition \ref{prop:conv} give us the following estimate: for a universal constant $\gamma>0$ and $C_0(K,\mu)>1$, and any $n \in \mathbb{N}^{*}$,
$$
\left| \lambda_{1}(\mu') - \lambda_{1}(\mu) \right| \leq  \frac{C_0}{n^\gamma} + C_K^{n} d_{\mathrm W}(\mu',\mu) .
$$
Let $n \ge 0$ be the smallest such that 
$$
C_K^{n} d_{\mathrm W}(\mu',\mu) > 2^{-n}.
$$
We have always $n \geq 1$ and 
the reversed inequality: $C_K^{n-1} d_{\mathrm W}(\mu', \mu) \leq 2^{-(n-1)}$. We bound the difference, since $n^{\gamma} \leq 2^{n-1}$:
$$
\left| \lambda_{1}(\mu') - \lambda_{1}(\mu) \right| \leq   \frac{C_0}{n^{\gamma}} + \frac{C_K}{2^{n-1}}  \leq \frac{C_0+C_K}{n^{\gamma}} \leq C (-\log d_{\mathrm W}(\mu',\mu))^{-\gamma}.
$$
This concludes the result for $\lambda_1(\cdot)$ when $\mu$ is aperiodic.

Now assume $\mu$ is not aperiodic. The convolution $\mu ^{*p_{\mu}}$ is aperiodic in $A_{\mu}$ by Lemma~\ref{lemma:aper}, and 
$$
\lambda_{1}(\mu ^{*p_{\mu}}) = p_{\mu} \lambda_{1}(\mu) ,
$$
by Lemma~\ref{lemma:convolution_le}. Combining this with the Wasserstein bound of Lemma~\ref{lemma:trivial}, we have $d_{\mathrm W}( (\mu')^{*p_{\mu}} , \mu ^{*p_\mu} ) \leq p_{\mu} L_{K}^{(p_{\mu}-1)/2} d_{\mathrm W} ( \mu', \mu )$. Hence, uniformly whenever $p_{\mu} L_{K}^{(p_{\mu}-1)/2} d_{\mathrm W} ( \mu', \mu ) \leq 1$, 
for instance whenever $d_{\mathrm W} ( \mu', \mu ) \leq \left( 2 L_K \right)^{-{d!}}$:
\begin{align*}
\left| \lambda_{1}(\mu') - \lambda_{1}(\mu) \right|
&= \frac{1}{p_{\mu}} \left| \lambda_{1}((\mu')^{*p_{\mu}}) - \lambda_{1}(\mu^{*p_{\mu}}) \right| \\
&\leq \frac{C_0}{p_{\mu}} \left| \log d_{\mathrm W}\!\left((\mu')^{*p_{\mu}}, \mu^{*p_\mu}\right) \right|^{-\gamma}
\leq C \left| \log d_{\mathrm W}(\mu', \mu) \right|^{-\gamma}.
\end{align*}

For $\lambda_k$ with $k \geq 2$, we apply the exterior power reduction. 
It is clear that $\wedge^k\mu$ is also compactly supported, semisimple with one-point Lyapunov spectrum.
Since the log-H\"{o}lder bound for $\lambda_1$ is now established, the identity
$$
\lambda_k(\mu) = \lambda_1(\wedge^k \mu) - \lambda_1(\wedge^{k-1} \mu)
$$
yields log-H\"{o}lder continuity for all Lyapunov exponents.
\end{proof}

\bibliographystyle{alpha}
\bibliography{bib}

\end{document}